\documentclass[11 pt]{article}

\usepackage[utf8]{inputenc}
\usepackage[english]{babel}   
\usepackage{graphicx}
\usepackage{lmodern}
\usepackage{amsmath}
\usepackage{tikz}
\usetikzlibrary{calc}
\usepackage{longtable}
\usepackage{epsfig}
\usepackage{color}
\usepackage{setspace}
\usepackage[top=2cm,bottom=2.5cm,left=2.5cm,right=2.5cm]{geometry}
\usepackage{sectsty}
\usepackage{fancyhdr}
\usepackage{wasysym}
\usepackage[colorlinks=true]{hyperref}
\usepackage{caption}
\usepackage{subcaption}
\usetikzlibrary{patterns}
\usepackage{amsfonts}
\usepackage{amssymb}
\usepackage{amsthm}
\usepackage{enumerate}
\usepackage{bm}
\usepackage{todonotes}
\usepackage{algorithm}
\usepackage{algpseudocode}
\usepackage{comment}

\newtheorem{theorem}{Theorem} %[subsection]
\newtheorem{definition}[theorem]{Definition}
\newtheorem{lemma}[theorem]{Lemma}
\newtheorem{proposition}[theorem]{Proposition}
\newtheorem{remark}[theorem]{Remark}

\newtheorem{coro}[theorem]{Corollary}

\newcommand\tr{\mathsf T}

\newcommand\Exp{\operatorname{Exp}}
\newcommand\sym{\operatorname{sym}}
\newcommand\Log{\operatorname{Log}}
\newcommand{\dd}{\mathop{}\!\mathrm{d}}
\newcommand\Grad{\operatorname{Grad}}
\newcommand\Ker{\operatorname{Ker}}
\newcommand{\DG}{\mathrm{DG}}

\graphicspath{{./figures}}

\title{Proximal Galerkin for the isometry constraint}
\date{}
\author{\begin{minipage}{\textwidth}\centering
		Brendan Keith
      \\
      {\small Division of Applied Mathematics, Brown University, Providence, RI 02912, USA}\\
     {\small email: \texttt{brendan\_keith@brown.edu}}
      \\[1em]
      {\small and}
      \\[1em]
      Frédéric Marazzato \\
		{\small Department of Mathematical Sciences, University of Nevada Las Vegas, NV 89154-4020, USA}\\
  {\small email: \texttt{frederic.marazzato@unlv.edu}}
   \end{minipage}
   }
      
\begin{document}
\hypersetup{urlcolor=blue,linkcolor=red,citecolor=blue}

\maketitle

\begin{abstract}
    We resolve a longstanding open problem in the computational modeling of nonlinear plates by introducing a numerical method that exactly enforces the isometry constraint, namely, that the first fundamental form of the mid-surface coincides with the identity tensor.
    Several numerical methods have been proposed to approximate solutions of such manifold-constrained variational problems using gradient flows with tangent space updates.
However, this class of methods presents two main challenges.
First, a preprocessing step is required to enforce the boundary conditions and generate an initial guess sufficiently close to an isometry. Second, each step of the gradient flow typically increases the isometry defect.
We adopt an alternative approach based on the proximal Galerkin framework, originally introduced for variational problems with convex inequality constraints.  
The resulting method preserves the geometric structure of the feasible set and yields an efficient algorithm in which each iterate is an exact isometry at the barycenter of every mesh cell.
In contrast to existing methods, no preprocessing step is required, enabling broader applicability of this important category of mathematical models.
Numerical experiments on standard benchmarks demonstrate that the method converges to a prescribed error tolerance in an asymptotically mesh-independent number of iterations and requires substantially fewer iterations than previous methods, even on coarse meshes.
\end{abstract}

\section{Introduction}
The seminal work \cite{friesecke2006hierarchy} shows that nonlinear elasticity gives rise to distinct two-dimensional plate models in different asymptotic regimes.
We consider large bending deformations of Kirchhoff plates under an isometry constraint, a model relevant, for instance, to the deformation of paper \cite{MR4827936,MR4524370}.

To introduce the model, we let $\Omega \subset \mathbb{R}^2$ be a bounded, polygonal domain with Lipschitz boundary $\partial \Omega$.
A deformation variable $\bm{y} : \Omega \to \mathbb{R}^3$ describes the midsurface of a plate of given non-zero thickness.
Given Dirichlet boundary conditions and a body force $\bm{f} : \Omega \to \mathbb{R}^3$, we seek isometric deformations satisfying
\begin{subequations}
\label{eq:EnergyPrinciple}
\begin{equation}
\label{eq:IsometryConstraint}
\nabla \bm{y}^\top \nabla \bm{y} = I \quad \text{a.e.\ in } \Omega,
\end{equation}
that minimize
\begin{equation}
\label{eq:EnergyFunctional}
E(\bm{y}) := \frac{1}{2} \int_\Omega \nabla^2 \bm y  : \nabla^2 \bm y \dd x - \int_\Omega \bm f \cdot \bm y \dd x
\end{equation}
\end{subequations}
Here, $\nabla^2 \bm{y}$ denotes the Hessian, while $:$ denotes the Frobenius product of tensors.
The model is thus an energy principle with a nonlinear PDE constraint.

Numerous discretization strategies have been proposed for this model.
Conforming Kirchhoff elements were introduced in \cite{bartels2013approximation}.
A symmetric interior penalty discontinuous Galerkin (IPDG) method, which is simpler to implement, was proposed in \cite{bonito2021dg}.
More advanced schemes also exist \cite{bonito2023numerical,MR4839135}, but are not considered here, as the main difficulty remains the enforcement of the isometry constraint~\eqref{eq:IsometryConstraint}.

To date, the literature has focused on discrete $H^2$ gradient flows to minimize~\eqref{eq:EnergyFunctional} while restricting updates to the tangent space of the isometry manifold.
These methods require an initial guess that satisfies the boundary conditions and has a small isometry defect.
Constructing such a guess is costly and nontrivial in this setting.
Moreover, since iterates lie in the tangent space rather than on the manifold, they tend to drift away from the feasible set, producing an \textit{isometry defect} that must be considered during analysis.
This defect can be reduced by decreasing the pseudo-time step size, but doing so can severely limits performance.
Perhaps more detrimentally, the convergence of such schemes requires the step size to depend on the mesh size, leading to mesh-dependent iteration complexity, with the number of iterations diverging as the mesh size tends to zero.

In this work, we instead adopt a proximal Galerkin approach \cite{keith2023proximal,keith2025apriori,ern2026proximal}.
This allows us to overcome each of the aforementioned issues, while retaining the relatively simple IPDG discretization.
Our method is inspired by Riemannian proximal point algorithms \cite{ferreira2002proximal} and exploits the Stiefel manifold geometry of the isometry constraint~\eqref{eq:IsometryConstraint}.

Like other proximal Galerkin methods \cite{papadopoulos2024hierarchical,dokken2025latent,fu2026proximal,fu2026locally,castillon2026proximal}, ours amounts to applying a Galerkin method to solve a recursive system of smooth, nonlinear PDEs, the solutions of which are feasible iterates that converge to minimizers of~\eqref{eq:EnergyPrinciple}.
After deriving this system of PDEs and introducing our chosen discretization, we prove existence and uniqueness of the discrete iterates and show that the discrete energy decreases at every new iterate.
The resulting iterates satisfy the isometry constraint at cell barycenters as well as boundary conditions, thus eliminating the need for a preprocessing step.
Moreover, the pseudo-time step is independent of the mesh size, allowing large steps on fine meshes, with our experiments demonstrating \textit{mesh-independent} iteration complexity.

The paper is organized as follows.
Section \ref{sec:preliminaries} recalls elementary Riemannian geometry and properties of the Stiefel manifold.
Section~\ref{sec:continuous} formulates the continuous problem and derives the proximal Galerkin iteration.
Section~\ref{sec:discrete} presents the IPDG discretization, establishes well-posedness and discrete energy decay. % and convergence to stationary points.
Section~\ref{sec:convergence} proves convergence of global discrete minimizers via compactness and $\Gamma$-convergence.
Finally, Section~\ref{sec:numerics}, demonstrates improved performance on standard benchmarks.

\section{Preliminaries}
\label{sec:preliminaries}
This section recalls a few basic facts about Riemannian manifolds.
One can refer to \cite{MR3887684} and \cite{zimmerman2022computing} for further details.

\subsection{Riemannian manifolds: A crash course}
Let $\mathcal M$ be a smooth manifold.
We denote the tangent space at a point $p \in \mathcal M$ by $T_p\mathcal M$, and its dual space, called the cotangent space, by $T_p^\ast \mathcal M$.
The tangent bundle is the disjoint union of all tangent spaces to the manifold, namely $T\mathcal{M} := \{ (p,\mu) \mid p \in \mathcal{M},\, \mu \in  T_p \mathcal{M}\}$.
The cotangent bundle is defined analogously, $T^\ast\mathcal{M} := \{ (p,\xi) \mid p \in \mathcal{M},\, \xi \in  T_p^\ast \mathcal{M}\}$.
Following standard convention, we identify members of the tangent bundle with $T_p\mathcal M$ and use the notations $(p,\mu)$, $\mu_p$, and $\mu$ interchangeably for elements of $T_p\mathcal M$, depending on the desired emphasis on the base point $p$.  

The differential of a smooth function $f \colon \mathcal M \to \mathbb R$ at $p \in \mathcal M$, denoted by $df_p$, is an element of $T_p^\ast \mathcal M$.
In particular, $df_p(\mu) \in \mathbb{R}$ is the directional derivative of $f$ at $p$ in the direction $\mu \in T_p\mathcal M$.
In this notation, the differential of $f$ may be viewed as a section of the cotangent bundle, that is, as a map
\[
    df \colon \mathcal M \to T^\ast \mathcal M
    ,
    \qquad
    p \mapsto df_p
    .
\]

Letting $(\mathcal M,g)$ be a smooth Riemannian manifold, the Riemannian metric at $p \in \mathcal M$ is an inner-product $g_p(\cdot,\cdot) \colon T_p \mathcal{M} \times T_p \mathcal{M} \to \mathbb{R}$ providing a natural isomorphism $\mu \mapsto g_p(\mu,\cdot) \in T_p^\ast \mathcal{M}$.
An important use of this isomorphism is to define the {gradient} of a smooth function on $\mathcal M$.
\begin{definition}[Gradient]
\label{def:gradient}
Let $(\mathcal M, g)$ be a Riemannian manifold and $f \colon \mathcal M \to \mathbb{R}$ be a smooth function.
The gradient of $f$ is the unique vector field $\operatorname{Grad} f \in T\mathcal M$ satisfying
\begin{equation}
\label{eq:GradDefinition}
    g(\operatorname{Grad} f,\cdot) = df
    .
\end{equation}
\end{definition}

\noindent
The metric can also be used to define the magnitude of a vector $\mu \in T_p \mathcal M$,
\( |\mu|_p := \sqrt{g_p(\mu, \mu)} \),
which appears in the definition of the length of a curve in $\mathcal M$.
\begin{definition}[Length of a curve]
\label{def:length curve}
Let $(\mathcal M, g)$ be a Riemannian manifold.
Let $a < b$ and $\gamma : [a,b] \to \mathcal M$ be a piecewise-smooth curve.
The length of $\gamma$ is defined to be
\[ L(\gamma) := \int_a^b |\dot{\gamma}|_{\gamma(t)} \dd t. \]
\end{definition}

\noindent
Having defined the length of a curve, we can now define the distance between points on $\mathcal M$.
\begin{definition}[Riemannian distance]
\label{th:distance}
Let $(\mathcal M, g)$ be a Riemannian manifold.
The Riemannian distance between $p,q \in \mathcal M$ is defined to be
\[ \operatorname{Dist}(p, q) := \inf_{\gamma \in S_{p,q}} L(\gamma), \]
where $S_{p,q}$ is the set of piecewise-smooth curves $\gamma:[0,1] \to \mathcal M$ satisfying $\gamma(0) = p$ and $\gamma(1) = q$.
\end{definition}

\noindent
Critical points of $L(\gamma)$, if they exist, are called (length-minimizing) \textit{geodesics}.
We are interested in a particular manifold where such geodesics always exist.

\begin{definition}(Geodesic completeness)
A Riemannian manifold $(\mathcal M, g)$ is said to be geodesically complete if a geodesic exists between every two points $p,q \in \mathcal M$.
\end{definition}

\noindent
A major result connects geodesic completeness to metric completeness.
See \cite{MR3887684} for a proof.

\begin{definition}(Metric completeness)
A Riemannian manifold $(\mathcal M, g)$ is said to be metrically complete if $(\mathcal M, \operatorname{Dist})$ is a complete metric space.
\end{definition}

\begin{theorem}[Hopf--Rinow]
\label{th:hopf}
A connected Riemann manifold is metrically complete if and only if it is geodesically complete.
\end{theorem}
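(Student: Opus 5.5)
The proof follows the classical route (as in \cite{MR3887684}), with one caveat about the definition used. Taken literally, ``a geodesic exists between every two points'' is strictly weaker than metric completeness: the open unit disk in $\mathbb R^2$ has a minimizing segment between any two points but is not complete. The ``if'' direction is therefore false as stated. I would prove the theorem with the standard definition, namely that every geodesic $\gamma$ is defined for all $t\in\mathbb R$, equivalently $\exp_p$ is defined on all of $T_p\mathcal M$. The statement about connecting points then appears as an intermediate conclusion rather than as the hypothesis.

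\textbf{Metric completeness implies geodesic completeness.} Let $\gamma:[0,b)\to\mathcal M$ be a unit-speed geodesic with $b<\infty$ maximal. For $t_n\uparrow b$ we have $\operatorname{Dist}(\gamma(t_n),\gamma(t_m))\le |t_n-t_m|$, so $(\gamma(t_n))$ is Cauchy and converges to some $p\in\mathcal M$.
\begin{itemize}
\item Choose a uniformly normal neighborhood $W\ni p$ and $\delta>0$ such that every geodesic starting in $W$ with unit speed exists on $(-\delta,\delta)$.
\item Pick $t_n$ with $b-t_n<\delta$ and $\gamma(t_n)\in W$.
\item The geodesic through $\gamma(t_n)$ with velocity $\dot\gamma(t_n)$ exists on $(-\delta,\delta)$. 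By uniqueness of geodesics it extends $\gamma$ beyond $b$, a contradiction.
\end{itemize}

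\textbf{Defined exponential map gives minimizing geodesics (the key step, and the main obstacle).} Suppose $\exp_p$ is defined on all of $T_p\mathcal M$, and let $q\ne p$ with $r=\operatorname{Dist}(p,q)$.
\begin{itemize}
\item Take a small geodesic sphere $S_\delta(p)$ inside a normal ball. By compactness, some $x_0=\exp_p(\delta v)\in S_\delta(p)$, with $|v|=1$, minimizes $\operatorname{Dist}(\cdot,q)$ over the sphere.
\item Every curve from $p$ to $q$ crosses $S_\delta(p)$, so $\operatorname{Dist}(x_0,q)=r-\delta$.
\item Set $\gamma(t)=\exp_p(tv)$ and $T=\{t\in[0,r]:\operatorname{Dist}(\gamma(t),q)=r-t\}$. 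The set $T$ is nonempty and closed.
\item If $t_0=\max T<r$, repeat the sphere argument around $\gamma(t_0)$. This produces $x_1$ at distance $\delta'$ with $\operatorname{Dist}(x_1,q)=r-t_0-\delta'$.
\item The triangle inequality then forces $\operatorname{Dist}(p,x_1)=t_0+\delta'$. So the broken curve ($\gamma$ followed by the radial segment to $x_1$) is length-minimizing, hence an unbroken geodesic that coincides with $\gamma$. Thus $t_0+\delta'\in T$, contradicting maximality.
\item Hence $r\in T$, so $\gamma(r)=q$ and $\gamma|_{[0,r]}$ is a minimizing geodesic.
\end{itemize}
The delicate points are the corner-smoothing fact (a length-minimizing piecewise geodesic has no corners) and the uniform-neighborhood lemma; I would quote both from \cite{MR3887684}.

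\textbf{Closing the loop.} By the previous step, the closed metric ball $\bar B_R(p)$ equals $\exp_p(\{|v|\le R\})$. This is the continuous image of a compact set, hence compact. A Cauchy sequence is bounded, so it lies in some $\bar B_R(p)$. It therefore has a convergent subsequence and thus converges, which gives metric completeness. Connectedness is used to make $\operatorname{Dist}$ finite, so that $r$ and $R$ above are well defined.
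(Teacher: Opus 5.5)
Your proof is correct and is the classical argument in \cite{MR3887684}: extending a geodesic past a finite endpoint via a uniformly normal neighborhood, the sphere-and-maximal-$t_0$ argument for minimizing geodesics when $\Exp_p$ is defined on all of $T_p\mathcal M$, and compactness of closed balls. The paper gives no proof of its own and simply defers to that reference. Your caveat is also right: under the paper's literal definition (a geodesic joins any two points), the open unit disk refutes the ``if'' direction, so the theorem holds only with the standard definition you adopt. That standard definition is also the one the paper actually relies on when it invokes Theorem~\ref{th:hopf} to conclude that $\Exp$ is defined on all of $T\,St(n,m)$.
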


\noindent
Points in Euclidean space are updated along straight lines with constant velocity, $\gamma(t) := p + t \mu$.
Thus, setting $t = 1$ for simplicity, any point $q = \gamma(1)$ in a neighborhood of $p$ can parameterized by the associated velocity variable $\mu$.
The generalization of this parameterization to Riemannian manifolds, is given by the exponential map $\Exp \colon T\mathcal M \to \mathcal M$ \cite[Proposition~5.19]{MR3887684}.

\begin{definition}[Exponential map]
\label{th:exp map}
Let $(\mathcal M,g)$ be a geodesically complete Riemannian manifold.
Let $p \in \mathcal M$.
For any $\mu \in T_p \mathcal M$, there exists a unique geodesic $\gamma_\mu : [0,1] \to \mathcal M$, such that
\[
\gamma_\mu(0) = p
\quad\text{and}\quad
\dot{\gamma}_\mu(0) = \mu.
\]
The exponential map at $p$, denoted by $\Exp_p \colon T_p \mathcal M \to \mathcal M$, is defined as
\[
\Exp_p(\mu) := \gamma_\mu(1).
\]
\end{definition}

\noindent
The inverse of the exponential map $\Exp_p$ gives the velocity required to move from $p$ to $q$ in one unit of time.

\begin{definition}[Logarithmic map]
\label{def:log}
Let $(\mathcal M, g)$ be a complete Riemannian manifold.
Let $p \in \mathcal M$.
There exists an open neighborhood $U \subset T_p \mathcal M$ of the origin such that $\Exp_p: U \to V \subset \mathcal M$ is a diffeomorphism.
This inverse function is called the logarithmic map and denoted by $\operatorname{Log}_p \colon V \to U$.
\end{definition}

\noindent
Observe that $\operatorname{Log}_p(q) = q - p$ in Euclidean space, indicating that the velocity vector required to move from $p$ to $q$ is the negative of the vector required to move from $q$ to $p$; i.e., $\operatorname{Log}_p(q) = -\operatorname{Log}_q(p)$.
However, this relationship isn't well-defined in general because $\operatorname{Log}_p(q) \in T_p\mathcal{M}$ and $\operatorname{Log}_q(p) \in T_q\mathcal{M}$ belong to different sets.
Instead, the general relationship between $\operatorname{Log}_p(q)$ and $\operatorname{Log}_q(p)$ can be made precise via the concept of parallel transport.
See \cite[Chapters~4]{MR3887684} for more further details about parallel transport.

\begin{definition}[Parallel transport map]
\label{th:parallel}
Let $p,q \in \mathcal M$.
We consider a smooth curve $\gamma:[0,1] \to \mathcal M$ such that $\gamma(0) = p$ and $\gamma(1) = q$.
For a given tangent vector $\mu_p \in T_p \mathcal M$, there exists a unique parallel vector field $V$ along $\gamma$ such that $V(0)=\mu_p$.
We define the parallel transport map $P_{p\to q}\colon T_p \mathcal M \to T_q \mathcal M$ such that, 
\[ P_{p\to q}(\mu) = V(1). \]
\end{definition}

\noindent If $\gamma$ is a geodesic connecting $p$ to $q$, then the initial velocity $\operatorname{Log}_p(q)$ at time $t=0$ is parallel transported to the final velocity $-\operatorname{Log}_q(p)$ at time $t=1$; namely,
\begin{equation}
\label{eq:LogRelationship}
    -\operatorname{Log}_q(p) = P_{p\to q}\operatorname{Log}_p(q)
    .
\end{equation}
We now recall the following crucial result, which is the Riemannian manifold generalization of the Euclidean identity $\frac12 \frac{\partial}{\partial q}\|q - p\|_2^2 = q - p$.
A proof is provided in Appendix~\ref{sec:proof}.

\begin{lemma}
\label{th:grad distance}
Let $(\mathcal M,g)$ be a complete Riemannian manifold with $p \in \mathcal M$ fixed.
The following identity holds for every $q \in \mathcal M$ sufficiently close to $p$:
\begin{equation}
\label{eq:GeodesicEnergyIdentity}
    \operatorname{Grad}_q \bigg( \frac12 \operatorname{Dist}(q, p)^2 \bigg)
    =
    -\operatorname{Log}_{q}(p)
    \in T_q \mathcal M
    .
\end{equation}
\end{lemma}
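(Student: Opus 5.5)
Fix $q$ in a normal neighbourhood of $p$; for instance, take $q$ in a totally normal (uniformly normal) neighbourhood containing $p$. Then there is a unique minimizing geodesic from $p$ to $q$, and $\operatorname{Dist}(\cdot,p)^2$ is smooth near $q$. The plan is to compute the differential of $f(x) := \frac12 \operatorname{Dist}(x,p)^2$ at $q$ in an arbitrary direction $\mu \in T_q\mathcal M$. By Definition~\ref{def:gradient}, it then suffices to show
\[
df_q(\mu) = g_q\big(-\operatorname{Log}_q(p),\mu\big).
\]
For the computation, choose a smooth curve $c:(-\epsilon,\epsilon)\to\mathcal M$ with $c(0)=q$ and $\dot c(0)=\mu$. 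Consider the one-parameter family of geodesics
\[
\Gamma(s,t) := \operatorname{Exp}_p\big(t\,\operatorname{Log}_p(c(s))\big), \qquad t\in[0,1].
\]
Each $\Gamma(s,\cdot)$ joins $p$ to $c(s)$. Since $\operatorname{Exp}_p$ is a diffeomorphism onto the neighbourhood (Definition~\ref{def:log}), $\Gamma$ is smooth, and by the Gauss lemma each such radial geodesic is minimizing. Hence
\[
f(c(s)) = \tfrac12 L(\Gamma(s,\cdot))^2 = \tfrac12\int_0^1 |\partial_t\Gamma|^2\,\dd t = E(s),
\]
where the middle equality uses that geodesics have constant speed.

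The key step is the first variation of energy. Differentiating under the integral and using metric compatibility and symmetry of the Levi-Civita connection gives $\frac{D}{ds}\partial_t\Gamma = \frac{D}{dt}\partial_s\Gamma$, so
\[
E'(0) = \int_0^1 g\big(\tfrac{D}{dt}\partial_s\Gamma,\partial_t\Gamma\big)\,\dd t .
\]
Because $\Gamma(0,\cdot)$ is a geodesic, $\frac{D}{dt}\partial_t\Gamma=0$, so the integrand is $\frac{d}{dt}\, g(\partial_s\Gamma,\partial_t\Gamma)$. Integrating yields only boundary terms:
\[
E'(0) = g_q\big(\partial_s\Gamma(0,1),\partial_t\Gamma(0,1)\big) - g_p\big(\partial_s\Gamma(0,0),\partial_t\Gamma(0,0)\big).
\]
The endpoint $p$ is fixed, so $\partial_s\Gamma(0,0)=0$. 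At the other end, $\partial_s\Gamma(0,1)=\dot c(0)=\mu$. Finally, $\partial_t\Gamma(0,1)$ is the terminal velocity of the geodesic from $p$ to $q$, which by \eqref{eq:LogRelationship} equals $P_{p\to q}\operatorname{Log}_p(q) = -\operatorname{Log}_q(p)$. Therefore $df_q(\mu)=g_q(-\operatorname{Log}_q(p),\mu)$, and uniqueness in Definition~\ref{def:gradient} gives \eqref{eq:GeodesicEnergyIdentity}.

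The main obstacle is justifying smoothness and the equality $\operatorname{Dist}(c(s),p)=L(\Gamma(s,\cdot))$, i.e., that the radial geodesics minimize. This is precisely where the hypothesis ``$q$ sufficiently close to $p$'' enters. I would invoke the standard fact that within a geodesic ball around $p$, radial geodesics are the unique minimizers (Gauss lemma, cf.~\cite{MR3887684}). This also makes $\operatorname{Dist}(\cdot,p)^2 = |\operatorname{Log}_p(\cdot)|_p^2$ smooth, which legitimizes differentiating under the integral. The identification of the terminal velocity with $-\operatorname{Log}_q(p)$ can be done directly, without \eqref{eq:LogRelationship}: reversing the geodesic gives a geodesic from $q$ to $p$ with initial velocity minus that terminal velocity, which is $\operatorname{Log}_q(p)$ as long as $p$ also lies in a normal neighbourhood of $q$. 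That is again ensured by taking $q$ in a totally normal neighbourhood of $p$.
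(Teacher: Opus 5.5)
Your proof is correct and follows essentially the same route as the paper's: the first variation of energy along the geodesic joining $q$ and $p$ with the endpoint $p$ held fixed, where the geodesic equation kills the interior term and only the boundary term $-g_q(\mu,\operatorname{Log}_q(p))$ survives. The differences are minor or in your favour. You orient the geodesic from $p$ to $q$, which is why you need the extra step identifying its terminal velocity with $-\operatorname{Log}_q(p)$. You also vary through the radial minimizing geodesics $\operatorname{Exp}_p(t\operatorname{Log}_p(c(s)))$, which makes $f(c(s))=E(s)$ exact. The paper instead uses an arbitrary variation fixing $p$ and leaves the equality $df_q(V_q(0))=\mathcal E'(\gamma_q)(V_q)$ implicit; it holds because $\mathcal E(\Gamma_q(s,\cdot))-f(\Gamma_q(s,0))\ge 0$ vanishes at $s=0$.
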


\noindent
We leverage identity~\eqref{eq:GeodesicEnergyIdentity} in Section~\ref{sub:saddle_point_problem} to derive the method.
First, however, we require another fundamental property of parallel transport: along any smooth curve $\gamma$ in $\mathcal M$, it defines a linear isometry \cite[Proposition~5.5]{MR3887684}.
In particular, the angle between any two tangent vectors $\mu_p, \nu_p \in T_p \mathcal M$ is preserved,
\begin{equation}
\label{eq:Isometry}
    g_p(\mu_p,\nu_p) = g_q(\mu_q,\nu_q),
\end{equation}
where $\mu_q = P_{p\to q}(\mu_p)$ and $\nu_q = P_{p\to q}(\nu_p) \in T_q \mathcal M$ denote the parallel transports of $\mu_p$ and $\nu_p$, respectively.
Thus, upon defining $f(q) = \frac12 \operatorname{Dist}(q, p)^2$, we can invoke \eqref{eq:GradDefinition}--\eqref{eq:Isometry} to write
\begin{equation}
\label{eq:FirstVariationDistanceSquared}
    df_q(\mu_q)
    =
    g_q(\operatorname{Grad}_q f,\mu_q)
    =
    g_p(\operatorname{Log}_{p}(q), \mu_p)
    ,
    \quad
    \forall \mu_p \in T_p \mathcal{M}
    ,
\end{equation}
which is simply a generalization of the Euclidean directional derivative identity $\mu \cdot \frac{\partial}{\partial q} \frac12\|q - p\|_2^2 = (q-p)\cdot \mu$.

\subsection{The Stiefel manifold}

Let $n,m \in \mathbb N$ such that $n \ge m$.
The Stiefel manifold is defined as
\[ St(n,m) := \{ U \in \mathbb R^{n \times m} \mid U^{\mathsf T} U = I_m \}, \]
where $I_m$ is the $m \times m$ identity matrix.
In the sequel, we focus on the case $n = 3$ and $m =2$.

It is well-known that $St(n,m)$ is a smooth manifold \cite{zimmerman2022computing}.
The tangent space to $St(n,m)$ at $U \in St(n,m)$ is
\[ T_U St(n,m) = \{ W \in \mathbb R^{n \times m} \mid U^\tr W + W^\tr U = 0 \}. \]
For $U \in St(n,m)$, we define $\Pi_U \colon \mathbb R^{n \times m} \to T_U St(n,m)$ to be the orthogonal projection onto the tangent space to $St(n,m)$ at $U \in St(n,m)$.
For any $W \in \mathbb R^{n \times m}$, one has
\begin{equation}
\label{eq:projection}
\Pi_U(W) = W - U \sym(U^\tr W), 
\end{equation}
where $\sym$ extracts the symmetric part of a square matrix.

$St(n,m)$ is a complete Riemannian manifold when equipped with a certain family of Riemannian metrics $g$ \cite{knut2021lagrangian}.
In this work, we consider
\begin{equation}
\label{eq:Stiefel metric}
g_U(W, \widetilde{W}) := \operatorname{tr}( W^\tr \widetilde{W} ),
\end{equation}
for every $U \in St(n,m)$ and $W, \widetilde{W} \in T_U St(n,m)$.
This is known as the {Euclidean case} since $\operatorname{tr}( W^\tr \widetilde{W} )$ is the Frobenius inner product between $W$ and $\widetilde{W}$.

Since $St(n,m)$ is a complete Riemannian manifold when endowed with the Riemannian metric in~\eqref{eq:Stiefel metric}, Theorem~\ref{th:hopf} ensures that the exponential map is defined on all of $T \, St(n,m)$.
In fact, one has the explicit formula
\begin{equation}
\label{eq:exponential map}
\Exp_U(W) := \exp\left( W U^\tr - U W^\tr \right) U \exp\left( -U^\tr W \right),
\end{equation}
for all $U \in St(n,m)$ and $W \in T_U St(n,m)$ \cite{knut2021lagrangian}.
Here, $\exp(X) := \sum_{k=0}^\infty \frac1{k!} X^k$ is the matrix exponential for any square matrix $X \in \mathbb{R}^{\ell\times\ell}$.
\begin{equation}
\label{eq:Taylor expansion}
\Exp_U(\tau W) = U + \tau W - \frac{\tau^2}2 U W^\tr W  + O(\tau^3)
\end{equation}
for all $\tau > 0$, $U \in St(n,m)$, and $W \in T_U St(n,m)$; cf.\ \cite[Proposition~5.19]{MR3887684} and \cite[Equation~2.7]{MR1646856}.

\begin{lemma}
\label{th:Lipschitz}
There exists $K > 0$ such that for all $U \in St(3,2)$ and for all $W, \tilde{W} \in T_U St(3,2)$, one has
\[ \| D \Exp_U(W) - D \Exp_{U}(\tilde{W}) \| \le K | W - \tilde{W} |.  \]
\end{lemma}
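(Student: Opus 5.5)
The plan is to show that the second derivative $D^2\Exp_U(W)$ is bounded uniformly in $U \in St(3,2)$ and in $W \in T_U St(3,2)$. The claim then follows from the mean value inequality along the segment $t \mapsto W + t(\tilde W - W)$, which stays in the linear space $T_U St(3,2)$. The whole argument runs on the explicit formula~\eqref{eq:exponential map},
\[
\Exp_U(W) = \exp(A(W))\, U\, \exp(B(W)), \qquad A(W) := W U^\tr - U W^\tr, \qquad B(W) := -U^\tr W .
\]
Both $A$ and $B$ are linear in $W$. The structural observation is that both are skew-symmetric for every $W \in T_U St(3,2)$. For $A$ this holds for any $W$. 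For $B$ it is exactly the tangent-space condition $U^\tr W + W^\tr U = 0$. Consequently $\exp(A(W))$ and $\exp(B(W))$ are orthogonal matrices, with operator norm $1$ for every such $W$.

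First, I will bound the linear maps in $H$. Since $\|U\| = 1$ for $U \in St(3,2)$, one has $|A(H)| \le 2|H|$ and $|B(H)| \le |H|$ (up to fixed norm-equivalence constants). Next, I will express the derivatives of the matrix exponential by Duhamel's formula,
\[
D\exp(X)[Y] = \int_0^1 \exp(sX)\, Y \exp((1-s)X) \dd s .
\]
Differentiating once more gives a double integral in which $Y$ and $Z$ are interleaved with factors $\exp(s_i X)$. For skew $X$ every factor $\exp(s_i X)$ is orthogonal, so
\[
|D\exp(X)[Y]| \le |Y| \quad\text{and}\quad |D^2\exp(X)[Y,Z]| \le C\,|Y|\,|Z| ,
\]
uniformly in $X$. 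Only skew $X$ and skew directions occur, because $W$ and the perturbations $H, H'$ all lie in $T_U St(3,2)$.

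Then I will apply the product rule twice to $\exp(A(W))\,U\,\exp(B(W))$. This produces finitely many terms. Each is a product of orthogonal factors (norm one), the matrix $U$ (norm one), and at most two first- or second-order Duhamel expressions in the directions $A(H), A(H'), B(H), B(H')$. Hence
\[
|D^2\Exp_U(W)[H,H']| \le K |H|\,|H'|
\]
with $K$ an absolute constant, independent of $U$ and $W$. Finally, I will integrate $D^2\Exp_U$ along the segment from $\tilde W$ to $W$ to get the stated estimate.

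The main obstacle is uniformity in $W$, which ranges over an unbounded set. A naive estimate of $D\exp$ would grow like $e^{|X|}$. Skew-symmetry of both exponents is what removes this growth, so I need to make sure every exponential that appears, including those at intermediate points $s X$ inside the Duhamel integrals, has a skew argument. I also need to fix the norm on the operator $D\Exp_U(W)$, for instance the operator norm induced by the Frobenius norm, so that the constants are explicit.
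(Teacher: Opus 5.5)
Your argument is correct and uses the same ingredients as the paper's proof: the explicit formula for $\Exp_U$, skew-symmetry of $WU^\tr - UW^\tr$ and of $-U^\tr W$ for tangent $W$ (so every exponential factor is orthogonal), and Duhamel's formula for the derivative of the matrix exponential. The difference is only in packaging. The paper splits $D\Exp_U(W) - D\Exp_U(\tilde W)$ into four terms and cites Lipschitz continuity of $\exp$, justified only by ``smooth and thus Lipschitz'', which in fact needs your skew-symmetry/Duhamel argument to hold globally. You instead bound $D^2\Exp_U$ uniformly and apply the mean value inequality along the segment in $T_U St(3,2)$, which makes that uniformity in $W$ explicit.
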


A proof is provided in Appendix \ref{sec:proof 2}.

\section{Derivation}
\label{sec:continuous}
In the sequel, we use the notation $A \lesssim B$ to denote $A \le c B$, where $c > 0$ is a constant independent of $A$ and $B$ and all sequence indices such as $k$ and $h$.
As usual, $(\cdot,\cdot)$ denotes the $L^2(\Omega)$-inner product between scalar-, vector-, or matrix-valued functions.
We write as $g$ the Riemannian metric from \eqref{eq:Stiefel metric} on $St(3,2)$ and $\operatorname{Dist}$ the associated distance from Definition \ref{th:distance}.

\subsection{Continuous problem setting}
\label{sub:continuous_setting}
Let $\Gamma^D \subset \partial \Omega$ denote the relatively open portion of the boundary on which Dirichlet conditions are imposed.
For $\bm{x} \in \partial \Omega$, we denote by $\bm{n}(\bm{x}) \in \mathbb{R}^2$ the outward unit normal to $\partial \Omega$.

Let $V := H^2(\Omega)^3$.
Let $\bm{y}_D \in H^\frac12(\Gamma^D)^3$ and $\bm G_D \in H^\frac12(\Gamma^D)^3$.
Let $V_D := \{\bm w \in V \mid \bm w = \bm{y}_D \text{ and } (\nabla \bm w) \bm n = \bm G_D \text{ on } \Gamma^D \}$ be a subset of $V$.
We define the associated homogeneous space as $V_0 := \{\bm w \in V \mid \bm w = \bm 0 \text{ and } (\nabla \bm w) \bm n = \bm 0 \text{ on } \Gamma_D \}$.
We define the admissible set as 
\[ \mathbb A := \{\bm w \in V_D \mid \nabla \bm w \in St(3,2) \text{ a.e.\ in } \Omega \}. \]
Throughout the paper, we only consider settings where $\mathbb A \neq \emptyset$.

We now define the bilinear form
\[
    a(\bm v, \bm w) := \int_\Omega \nabla^2 \bm v : \nabla^2 \bm w \dd x ,
    \qquad \bm v, \bm w \in V.
    \]
    
The bending energy~\eqref{eq:EnergyFunctional} can therefore be rewritten as
\[ E(\bm{y}) = \frac12 a(\bm y, \bm y) - (\bm f, \bm y), \qquad \bm y \in V. \]

\begin{lemma}
\label{th:existence minimizer}
Assuming $\mathbb A \neq \emptyset$, there exists $\bm y \in \mathbb A$ such that
\begin{equation}
\label{eq:min problem}
E(\bm y) = \inf_{\bm v \in \mathbb A} E(\bm v).
\end{equation}
\end{lemma}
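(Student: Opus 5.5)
We will use the direct method of the calculus of variations.

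\emph{Finiteness of the infimum and a minimizing sequence.} Since $\mathbb A \neq \emptyset$, we have $\inf_{\mathbb A} E < +\infty$. We then take a minimizing sequence $(\bm y_k) \subset \mathbb A$ with $E(\bm y_k) \to \inf_{\mathbb A} E$.

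\emph{Coercivity on $\mathbb A$.} This is the step requiring care, because $a(\cdot,\cdot)$ only controls the Hessian. We use the constraint directly.
\begin{itemize}
\item Since $\nabla \bm y_k \in St(3,2)$ a.e., each column of $\nabla \bm y_k$ has unit length. Hence $|\nabla \bm y_k|^2 = 2$ a.e., and $\|\nabla \bm y_k\|_{L^2} = \sqrt{2|\Omega|}$ uniformly in $k$.
\item We fix a point $\bm x_0$ of the Dirichlet portion. Because $\bm y_k = \bm y_D$ on $\Gamma^D$, assumed nonempty (otherwise we normalize by subtracting means, which leaves $E$ unchanged up to the load term), a Poincaré--Friedrichs inequality with boundary trace gives
\[
\|\bm y_k\|_{L^2} \lesssim \|\nabla \bm y_k\|_{L^2} + \|\bm y_D\|_{L^2(\Gamma^D)} \lesssim 1.
\]
Alternatively, since $\bm y_k$ is $\sqrt2$-Lipschitz on each segment and $\Omega$ is bounded and connected, $\|\bm y_k\|_{L^\infty}$ is bounded directly.
\item Consequently $|(\bm f,\bm y_k)| \lesssim \|\bm f\|_{L^2}$. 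From $E(\bm y_k) \le C$ we then get $\|\nabla^2 \bm y_k\|_{L^2}^2 \lesssim 1$.
\end{itemize}
Combining the three bounds, $(\bm y_k)$ is bounded in $V = H^2(\Omega)^3$.

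\emph{Compactness and closedness of $\mathbb A$.} We extract a subsequence with $\bm y_k \rightharpoonup \bm y$ weakly in $H^2$. By Rellich--Kondrachov on the Lipschitz domain, $\nabla \bm y_k \to \nabla \bm y$ strongly in $L^2$, and after a further subsequence a.e. in $\Omega$. Since $St(3,2)$ is closed in $\mathbb R^{3\times 2}$, we obtain $\nabla \bm y \in St(3,2)$ a.e. The boundary conditions pass to the limit because the trace maps $\bm w \mapsto \bm w|_{\Gamma^D}$ and $\bm w \mapsto (\nabla\bm w)\bm n|_{\Gamma^D}$ are continuous linear maps from $H^2$, hence weakly continuous. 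Therefore $V_D$ is weakly closed and $\bm y \in \mathbb A$.

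\emph{Lower semicontinuity.} The map $\bm v \mapsto \tfrac12 a(\bm v,\bm v)$ is convex and continuous on $V$, hence weakly lower semicontinuous. The term $(\bm f,\bm y_k) \to (\bm f,\bm y)$ converges by the strong $L^2$ convergence. Thus
\[
E(\bm y) \le \liminf_k E(\bm y_k) = \inf_{\mathbb A} E,
\]
and since $\bm y \in \mathbb A$, equality holds, proving \eqref{eq:min problem}.

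The main obstacle is the coercivity step. The energy lacks lower-order terms, so the constraint must supply the $H^1$ bound and the Dirichlet data on $\Gamma^D$ must supply the $L^2$ bound. The only subtlety is the case $|\Gamma^D| = 0$, which we handle by the translation remark: $a(\cdot,\cdot)$ is invariant under adding constants, and $\mathbb A$ is invariant as well when there is no Dirichlet boundary. The load term would then need $\int_\Omega \bm f = 0$, which we would assume or note.
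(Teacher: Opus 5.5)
Your proof is correct and has the same direct-method structure the paper intends. The paper omits this proof and refers to the proof of Lemma~\ref{th:energy min}; the difference lies in how you obtain the a priori bound.

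\textbf{The paper's route.} It never uses the constraint for coercivity. It bounds $\|\nabla^2\bm y_n\|^2$ by the energy plus $\|\bm f\|\,\|\bm y_n\|_{L^2}$. It then applies a Friedrichs-type inequality \cite[Corollary~2.2]{bonito2021dg} twice, to $\bm y_n$ and to $\nabla\bm y_n$. This uses both clamped conditions $\bm y=\bm y_D$ and $(\nabla\bm y)\bm n=\bm G_D$ on $\Gamma^D$, which together fix the full trace of $\nabla\bm y$. So $E$ is coercive on all of $V_D$, the load term is absorbed, and the argument does not depend on the constraint set.

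\textbf{Your route.} You get the $H^1$-seminorm bound directly from $|\nabla\bm y|^2=\operatorname{tr}(\nabla\bm y^\tr\nabla\bm y)=2$ a.e. You then need only the Dirichlet condition on $\bm y$ itself and a single Poincar\'e--Friedrichs inequality. This makes $E$ uniformly bounded below on $\mathbb A$ without any absorption step. It would also still work if the gradient boundary condition were dropped.

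\textbf{An overstatement.} Your closing remark that ``the constraint must supply the $H^1$ bound'' is too strong. The clamped boundary data can supply it, which is exactly what the paper does.

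\textbf{Where your version is more complete.} You explicitly check that the weak limit lies in $\mathbb A$, using a.e.\ convergence of gradients, closedness of $St(3,2)$, and weak closedness of $V_D$. The argument in Lemma~\ref{th:energy min} passes over this step. You also correctly note that a Dirichlet portion with $|\Gamma^D|>0$ (or, failing that, $\int_\Omega\bm f=0$) is genuinely needed for a minimizer to exist. The paper's phrase ``coercive over $V$'' glosses over this point.
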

\noindent
We do not give the proof of this result since the proof of Lemma \ref{th:energy min}, below, is very similar.
At a sufficiently regular minimizer, one anticipates the existence of a Lagrange multiplier $\gamma \colon \Omega \to \mathbb{R}^{2\times 2}_{\sym}$ associated to the isometry constraint~\eqref{eq:IsometryConstraint}.
The corresponding KKT conditions would be
\begin{equation}
\label{eq:Euler-Lagrange}
\left\{ \begin{alignedat}{3}
a(\bm y, \bm w) + \int_\Omega \gamma : (\nabla \bm y^\tr \nabla \bm w + \nabla \bm w^\tr \nabla \bm y) \dd x &= (\bm f, \bm w), &&\quad \forall \bm w \in V_0, \\
\int_\Omega (\nabla \bm y^\tr \nabla \bm y - I):\zeta \dd x &= 0, &&\quad \forall \zeta \in \Upsilon,
\end{alignedat} \right.
\end{equation}
where $\Upsilon$ is an appropriate space of $2\times 2$ symmetric tensors.

\begin{remark}
One cannot ensure the uniqueness of the solutions to~\eqref{eq:min problem} since $\mathbb A$ is not convex.
\end{remark}

\begin{remark}
Even though Lemma~\ref{th:existence minimizer} shows that there exists a solution to~\eqref{eq:min problem}, we do not presently know how to prove the existence of a solution to \eqref{eq:Euler-Lagrange}.
This is due to the difficulty of proving an inf-sup condition on the second term in the left-hand of the first equation of \eqref{eq:Euler-Lagrange}.
For details, see, e.g., \cite{MR4839135}.
\end{remark}

\subsection{Riemannian proximal point algorithm}
This section proposes an iterative method to compute minimizers of \eqref{eq:Euler-Lagrange}.

Let $\tau > 0$ be a pseudo time-step.
Given $\bm{y}^0 \in \mathbb A$, the algorithm consists of computing
\begin{equation}
\label{eq:proximal 1}
\bm{y}^{k+1} \in \operatorname*{arg\,min}_{\bm{y} \in \mathbb A}\, E(\bm{y}) + \frac1{2\tau} \int_\Omega \operatorname{Dist}(\nabla \bm{y},\nabla \bm{y}^k)^2 \dd x =: \tilde{E}(\bm y),
\qquad k = 0,1,\ldots
\end{equation}

\begin{lemma}[Existence of minimizers]
\label{th:energy min}
Assume $\bm{y}^k \in \mathbb A$.
There exists a solution of \eqref{eq:proximal 1}, $\bm y^{k+1} \in \mathbb A$.
Moreover, one has
\begin{equation}
E(\bm y^{k+1}) \leq E(\bm y^k) -  \frac1{2 \tau} \int_\Omega \operatorname{Dist}(\nabla \bm y^{k+1}, \nabla \bm y^k)^2 \dd x.
\label{eq:EnergyDissipationContinuousLevel}
\end{equation}
\end{lemma}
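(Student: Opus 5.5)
We use the direct method of the calculus of variations on $\tilde E$ over $\mathbb A$, and then compare the minimizer with the competitor $\bm y^k$.

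\emph{Finiteness and lower bound.} Since $\bm y^k \in \mathbb A$, we have $\tilde E(\bm y^k) = E(\bm y^k) < \infty$, so the infimum of $\tilde E$ over $\mathbb A$ is finite from above. The distance term is nonnegative. Moreover, $St(3,2)$ is compact with finite diameter, since its elements have columns of unit norm. Hence $\operatorname{Dist}(\nabla \bm y, \nabla \bm y^k)$ is bounded pointwise by a constant, and the integral is finite and nonnegative for every $\bm y \in \mathbb A$.

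\emph{Coercivity of $E$ on $\mathbb A$.} For $\bm v \in \mathbb A$ we have $|\nabla \bm v|^2 = 2$ a.e., so $\|\nabla \bm v\|_{L^2}$ is bounded uniformly. Combining this with the Dirichlet condition $\bm v = \bm y_D$ on $\Gamma^D$ and a Poincaré/Friedrichs inequality bounds $\|\bm v\|_{L^2}$. Together with $a(\bm v,\bm v) = \|\nabla^2 \bm v\|^2_{L^2}$, this gives
\[
\|\bm v\|_{H^2}^2 \lesssim a(\bm v,\bm v) + 1 .
\]
Since $(\bm f,\bm v) \le \|\bm f\|_{L^2}\|\bm v\|_{L^2}$ is bounded on $\mathbb A$, any minimizing sequence $(\bm v_j)$ for $\tilde E$ is bounded in $H^2(\Omega)^3$.

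\emph{Compactness and closedness of $\mathbb A$.} Extract a subsequence with $\bm v_j \rightharpoonup \bm y$ weakly in $H^2$. By Rellich, $\bm v_j \to \bm y$ strongly in $H^1$, and a further subsequence satisfies $\nabla \bm v_j \to \nabla \bm y$ a.e. Since $St(3,2)$ is closed, $\nabla \bm y \in St(3,2)$ a.e. The boundary traces $\bm w|_{\Gamma^D}$ and $(\nabla \bm w)\bm n|_{\Gamma^D}$ are continuous linear maps on $H^2$, hence weakly continuous, so $\bm y \in V_D$ and therefore $\bm y \in \mathbb A$.

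\emph{Lower semicontinuity.} The term $\frac12 a(\cdot,\cdot)$ is convex and continuous, hence weakly lower semicontinuous, and $(\bm f,\cdot)$ is weakly continuous. For the distance term, $\operatorname{Dist}(\cdot, \nabla \bm y^k(x))^2$ is continuous on $St(3,2)$ because the Riemannian distance is continuous. The a.e. convergence $\nabla \bm v_j \to \nabla \bm y$ then gives pointwise convergence of the integrands, which are uniformly bounded by $\operatorname{diam}(St(3,2))^2$. Dominated convergence yields convergence of the integral. Hence $\tilde E(\bm y) \le \liminf_j \tilde E(\bm v_j)$, and $\bm y^{k+1} := \bm y$ is a minimizer.

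\emph{Energy inequality.} By minimality, $\tilde E(\bm y^{k+1}) \le \tilde E(\bm y^k) = E(\bm y^k)$, since $\operatorname{Dist}(\nabla \bm y^k, \nabla \bm y^k) = 0$. Rearranging gives \eqref{eq:EnergyDissipationContinuousLevel}.

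A measurability check is also needed: $x \mapsto \operatorname{Dist}(\nabla \bm y(x), \nabla \bm y^k(x))$ is measurable as the composition of a continuous function with measurable maps.

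The main obstacle is the coercivity step. It relies on controlling $\|\bm v\|_{L^2}$ through the Dirichlet data. This requires $\Gamma^D$ to have positive measure; otherwise one must handle the linear load term separately, for instance by assuming $\bm f$ has zero mean or by working modulo constants.
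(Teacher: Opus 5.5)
Your proof is correct and follows the paper's route: the direct method on $\tilde E$ over $\mathbb A$ (uniform $H^2$ bound, Rellich compactness, weak lower semicontinuity of $E$, convergence of the distance integral), followed by comparison with the competitor $\bm y^k$. The differences are only in the details. You bound $\|\nabla \bm v\|_{L^2}$ via $|\nabla \bm v|^2 = 2$ a.e.\ instead of applying \cite[Corollary~2.2]{bonito2021dg} twice. You also explicitly check that the limit lies in $\mathbb A$ and justify the distance term by dominated convergence, steps the paper leaves implicit; note that the finite diameter of $St(3,2)$ also uses its connectedness, not only its compactness.
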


\begin{proof}
Observe that $\tilde E(\bm y) \geq E(\bm y) > -\infty$ for all $\bm y \in \mathbb A$.
Thus, $m:=\inf_{\bm y \in \mathbb A} \tilde E(\bm y) > - \infty$ since $E$ is quadratic and coercive over $V$.
We now apply the direct method of the calculus of variations \cite{evans2022partial}.
Let $(\bm y _n)_{n \in \mathbb N} \in \mathbb A$ be a sequence such that, 
\[ \tilde E(\bm y_n) \mathop{\longrightarrow}_{n \to +\infty}  m. \]
Therefore, there exists $C > 0$, for all $n \in \mathbb N$, $\tilde E(\bm y_n) \le C$.
One deduces that for all $n \in \mathbb N$,
\[ \|\nabla^2 \bm y_n \|^2_{L^2(\Omega)} \le 2 \left(C + \| \bm f \|_{L^2(\Omega)} \| \bm y_n \|_{L^2(\Omega)} \right). \]
Applying \cite[Corollary~2.2]{bonito2021dg} successively to $\bm y_n$ and $\nabla \bm y_n$, one gets that
\[ \| \nabla^2 \bm y_n \|_{L^2(\Omega)} \lesssim 1. \]
Thus, there exists $\bm y^{k+1} \in V$ such that, up to a subsequence not relabeled,
$$
\bm y_n \to \bm y^{k+1} \text{ strongly in } H^1(\Omega)^3 \qquad \textrm{and} \qquad  
\nabla^2 \bm y_n \mathop{\rightharpoonup} \nabla^2 \bm y^{k+1} \text{ weakly in } L^2(\Omega)^{3\times 2 \times 2},
$$
as $n \to +\infty$.
$E$ is trivially convex and strongly continuous over $V$.
Therefore, using \cite[Corollary~3.9]{brezis}, $E$ is l.s.c.\ for the weak topology of $V$.
One thus has
\[ E(\bm y^{k+1}) \le \liminf_{n \to +\infty} E(\bm y_n). \]
Also, $\operatorname{Dist}(\nabla \bm y^k, \cdot)^2$ is easily seen to be continuous over $L^2(\Omega)^3$.
Therefore,
\[ \lim_{n \to +\infty} \operatorname{Dist}(\nabla \bm y_n, \nabla \bm y^k)^2 = \operatorname{Dist}(\nabla \bm y^{k+1}, \nabla \bm y^k)^2. \]
To summarize, one has
\[ \tilde E(\bm y^{k+1}) \le \liminf_{n \to +\infty} \tilde E(\bm y_n). \]
Thus, $\bm y^{k+1}$ is a minimizer of $\tilde E$.

Let us show the last statement of this lemma.
Since $\bm y^{k+1}$ is a minimizer of $\tilde E$, one has
\[ E(\bm y^{k+1}) + \frac1{2 \tau} \int_\Omega \operatorname{Dist}(\nabla \bm y^{k+1}, \nabla \bm y^k)^2 \dd x =\tilde E(\bm y^{k+1}) \le \tilde E(\bm y^k) = E(\bm y^k). \qedhere \]
\end{proof}

\subsection{Saddle-point problem}
\label{sub:saddle_point_problem}
To compute each new iterate $\bm y^{k+1}$, we aim to formally derive the Euler--Lagrange equation associated to~\eqref{eq:proximal 1}.
Viewing $\mathbb{A}$ as a Hilbert--Riemannian manifold \cite{biliotti2017riemannian} with tangent spaces $T_{\bm y}\mathbb{A} := \{ \bm w \in V_0 \mid \nabla \bm w \in T_{\nabla \bm y } St(3,2) \text{ a.e.\ in } \Omega \}$, we proceed by considering arbitrary variations $\bm w \in T_{\bm y^k}\mathbb{A}$ of the tangent vector along the geodesic connecting $\bm y^k$ to $\bm y^{k+1}$.
Using~\eqref{eq:FirstVariationDistanceSquared} to simplify the derivative of the squared distance term in~\eqref{eq:proximal 1}, we arrive at the following variational equation: Find $\bm y^{k+1} \in \mathbb{A}$ such that
\begin{equation}
\label{eq:partial continuous}
    a(\bm y^{k+1},\bm P_{\bm y^k \to \bm y^{k+1}}\bm w) + \frac1\tau\int_\Omega g_{\nabla \bm y^k} \big(\operatorname{Log}_{\nabla \bm y^k}(\nabla \bm y^{k+1}), \nabla \bm w\big) \dd x
    =
    (\bm f, \bm w),
    \quad
    \forall
    \bm w \in T_{\bm y^k}\mathbb{A}
    ,
\end{equation}
where $\bm P_{\bm y^k \to \bm y^{k+1}}$ denotes parallel transport along the geodesic from $\bm y^k$ and $\bm y^{k+1}$ and $\operatorname{Log}$ denotes the Riemannian logarithm in on the Stiefel manifold.
Since parallel transport is not readily computable on $\mathbb{A}$, we modify~\eqref{eq:partial continuous} by replacing $\bm P_{\bm y^k \to \bm y^{k+1}}$ with the identity.
This leaves the modified equation
\begin{equation}
\label{eq:partial continuous modified}
    a(\bm y^{k+1},\bm w) + \frac1\tau\int_\Omega g_{\nabla \bm y^k} \big(\operatorname{Log}_{\nabla \bm y^k}(\nabla \bm y^{k+1}), \nabla \bm w\big) \dd x
    =
    (\bm f, \bm w),
    \quad
    \forall
    \bm w \in T_{\bm y^k}\mathbb{A}
    ,
\end{equation}
which characterizes an inexact proximal point update, still denoted $\bm y^{k+1}$ for simplicity.

Although improved, the modified equation remains undesirable for Galerkin discretization for three reasons: (i) it still contains the nonlinear constraint $\bm y^{k+1} \in \mathbb{A}$; (ii), it involves test functions in the space $T_{\bm y^k}\mathbb{A}$, which changes at each iteration; and (iii) there is no explicit formula for the Riemmannian logarithm on the Stiefel manifold \cite{zimmerman2022computing}.
Fortunately, each issue can be avoided by introducing additional solution variables.

We now introduce the tangent variable $\mu^{k+1} = \tau^{-1} \operatorname{Log}_{\nabla \bm y^k}(\nabla \bm y^{k+1}) \in M^k$, where
\[ M^k := \left\{ \mu \in L^2(\Omega)^{3 \times 2} \mid \mu \in T_{\nabla \bm y^k} St(3,2) \text{ a.e.\ in } \Omega \right\}, \]
and note that $g_{\nabla \bm y^k} (\mu^{k+1}, \nabla \bm w) = \operatorname{tr}( (\nabla \bm w)^\tr \mu^{k+1} )$ by~\eqref{eq:Stiefel metric}.
One then inverts the definition of $\mu^{k+1}$, $\Exp_{\nabla \bm{y}^k} (\tau \mu^{k+1}) = \nabla \bm y^{k+1}$, leading to a system of equations that avoids the logarithm operator in~\eqref{eq:partial continuous modified}: Find $\bm y^{k+1} \in V_D$ and $\mu^{k+1} \in M^k$ such that
\begin{equation}
\label{eq:2var continuous}
    \begin{alignedat}{3}
      a(\bm{y}^{k+1},\bm w) + (\mu^{k+1},\nabla\bm w)
      &=
      (\bm f, \bm w),\quad
      &&\forall \bm{w} \in T_{\bm y^k}\mathbb{A}, \\
      (\nabla \bm{y}^{k+1}, v) - (\Exp_{\nabla \bm{y}^k} (\tau \mu^{k+1}),v)
      &= 0,
      &&\forall v \in L^2(\Omega)^{3\times2}.
    \end{alignedat}
\end{equation}
In this case, we note that the condition $y \in \mathbb{A}$ from~\eqref{eq:partial continuous} and~\eqref{eq:partial continuous modified} is now enforced via the second equation in~\eqref{eq:2var continuous}.

We have just dealt with issues (i) and (iii) above.
However, we are not done since~\eqref{eq:2var continuous} requires $\nabla \bm w \in T_{\nabla \bm y^k } St(3,2)$ and $\mu^{k+1} \in T_{\nabla \bm y^k} St(3,2)$ a.e.\ in $\Omega$, both of which are linear constraints that change at each iteration.
Weak imposition of the test function constraint is standard in the literature \cite[Remark~3.2]{bonito2021dg}, and involves introducing the trilinear form 
\begin{equation}
\label{eq:TrilinearForm}
    l(\nabla \bm y; \mu, \gamma) := \int_\Omega \gamma : \big( \nabla \bm y^\tr \mu + \mu^\tr \nabla \bm y \big) \dd x,  
\end{equation}
for all $\bm y \in V$, $\mu \in L^2(\Omega)^{3 \times 2}$ and $\gamma \in \Upsilon$, where $\Upsilon$ is an appropriate space of measurable functions $\gamma \colon \Omega \to \mathbb{R}^{2\times 2}_{\sym}$.
At this point, it is instructive to note that the definition of the exponential map in~\eqref{eq:exponential map} extends naturally to a smooth operator on matrices $W \in \mathbb{R}^{n\times m}$.
As such, we can also weakly enforce the condition $\mu^{k+1} \in M^k$ using the same trilinear form in~\eqref{eq:TrilinearForm}.

Upon introducing a multiplier variable $\gamma \in \Upsilon$, we now arrive at the final continuous problem in weak form: Find $(\bm y^{k+1}, \mu^{k+1}, \gamma^{k+1}) \in V_D \times L^2(\Omega)^{3 \times 2} \times \Upsilon$ such that  
\begin{subequations}
\label{eq:relaxed}
\begin{alignat}{3}
  a(\bm{y}^{k+1},\bm w) + (\mu^{k+1},\nabla\bm w) + l(\nabla \bm y^k;\nabla \bm{w}, \gamma^{k+1})
  &=
  (\bm f, \bm w),
  &&\quad \forall \bm{w} \in V_0, \\
  (\nabla \bm{y}^{k+1}, v) - (\Exp_{\nabla \bm{y}^k} (\tau \mu^{k+1}),v)
  &= 0,
  \label{eq:mid eq}
  &&\quad \forall v \in L^2(\Omega)^{3\times2},\\
  l(\nabla \bm{y}^k;\mu^{k+1}, \zeta) &= 0,
  &&\quad \forall \zeta \in \Upsilon.
  \label{eq:last eq}
 \end{alignat}
\end{subequations}
Note that, we have continued to use $\operatorname{Exp}_{\bm y^k}$ to denote the extension of the exponential map with arguments in $L^2(\Omega)^{3 \times 2}$ for simplicity.
Moreover, reflecting on~\eqref{eq:relaxed}, we observe that if $\mu^{k+1} = 0$, then $\bm y^{k+1} = \bm y^k \in \mathbb{A}$ and
\begin{subequations}
\label{eq:saddle Lag}
\begin{alignat}{3}
  a(\bm{y}^{k+1},\bm w) + l(\nabla \bm{y}^{k+1};\nabla \bm{w}, \gamma^{k+1})
  &=
  (\bm f, \bm w),
  &&\quad\forall \bm w \in V_0, \\
  (\nabla \bm y^{k+1})^\tr \nabla \bm y^{k+1}  &= I, &&~~ \text{ a.e. in } \Omega.
 \end{alignat}
\end{subequations}
Comparing~\eqref{eq:saddle Lag} to~\eqref{eq:Euler-Lagrange} reveals a fundamental relationship between fixed points of~\eqref{eq:relaxed} and saddle-points of~\eqref{eq:min problem}.
Recalling the energy dissipation property~\eqref{eq:EnergyDissipationContinuousLevel}, it is reasonable to expect that the proposed iterative method converges to saddle-points of the Lagrangian associated to minimizing $E$ over $\mathbb A$.
We reconsider on this observation in the next section, after the discretization is introduced.

\section{Discretization}
\label{sec:discrete}
Let $\left\{\mathcal{T}_h \right\}_{h>0}$ be a family of quasi-uniform and shape regular triangulations exactly fitting $\Omega \subset \mathbb R^2$; see \cite{ciarlet2002finite} for details.
The subdivision parameter $h$ in $\mathcal T_h$ indicates that $\max_{T \in \mathcal{T}_h} h_T \leq h$ where for a triangle $T \in \mathcal{T}_h$, $h_T := \mathrm{diam}(T)$ denotes its diameter.
For each $T \in \mathcal{T}_h$ and $\ell \in \mathbb{N}$, we use $\mathbb P^\ell(T)$ to denote the space of polynomials over $T$ of total degree at most $\ell$.
Local polynomial in this space are utilized to construct discontinuous piecewise polynomials belonging to the set $\mathbb P^\ell(\mathcal T_h) = \prod_{T \in \mathcal T_h} \mathbb P^\ell(T)$.

We choose to construct the approximate deformation $\bm y_h \approx \bm y$ within the non-conforming space $V_h := \mathbb P^2(\mathcal T_h)^3 \not\subset V$.
We use $M_h := \mathbb P^0(\mathcal T_h)^{3 \times 2}$ to approximate the tangent direction $\mu \in L^2(\Omega)^{3 \times 2}$ and $\Upsilon_h := \{\gamma_h \in \mathbb P^0(\mathcal T_h)^{2 \times 2} \mid \gamma_h^\tr = \gamma_h \}$ to approximate the Lagrange multiplier $\gamma \in \Upsilon$.

\subsection{Further notation}

We denote by $\mathcal{E}^i_h$ the set of all internal edges of $\mathcal{T}_h$ and by $\mathcal{E}_h^b$ the set of active	 boundary edges $e \in \mathcal{E}_h$ such that $e \subset \Gamma^D$.
The set of active inter-element boundaries is then defined as $\mathcal E_h := \mathcal E_h^b \cup \mathcal E_h^i$.
For an internal edge $e \in \mathcal E_h^i$, we set $\bm n_e$ to be one of the normals to $e$ in $\Omega$.
For an external edge $e \in \mathcal{E}_h^b$, we take $\bm n_e \equiv \bm n$ to be the outward unit normal to $e \subset \Gamma^D$.
We define the interior and boundary skeletons as
\[ \Gamma^i_h := \bigcup_{e \in \mathcal E_h^i} \{e\} \text{ and } \Gamma^b_h := \bigcup_{e \in \mathcal E_h^b} \{e\}. \]
The full skeleton $\Gamma_h$ is then defined as $\Gamma_h := \Gamma^i_h \cup \Gamma^b_h$.
Note that the boundary edges that are not part of $\Gamma^D$ are not included in $\Gamma_h$.

The broken gradient $\nabla_h$ is defined for a function $v \in \prod_{T \in \mathcal T_h} H^1(T)$ as $\nabla_h v = \sum_{T \in \mathcal{T}_h} \nabla (v{|_T})$.
The broken Hessian, written $\nabla^2_h$ is defined similarly.
For any internal edge $e \in \mathcal{E}^i_h$ and $\bm y_h \in V_h$, we define the jumps over $e$ as
\[ [\bm y_h]_e := \bm y_h^- - \bm y_h^+ \quad\text{ and }\quad [\nabla \bm y_h]_e := (\nabla_h \bm y_h^- - \nabla_h \bm y_h^+) \bm n_e , \]
where $\bm v_h^\pm(\bm x) = \lim_{s\to 0^+} \bm v_h(\bm x \pm s \bm n_e)$ for all $\bm x \in e$.
For $\bm y_h \in V_h$, we define $[\bm w_h]_e := \bm w_h$ and $[\nabla_h \bm y_h]_e := (\nabla_h \bm y_h) \bm n_e$.
We will not write the subscript $e$ whenever no confusion arises.
For $\bm w_h \in V_h$ and $\mu_h \in M_h$, we define the following discrete norm
\[ \|\bm w_h \|^2_{H^2_h(\Omega)} := \| \nabla^2_h \bm w_h \|_{L^2(\Omega)}^2 + \| h^{-\frac12} [\nabla_h \bm w_h] \|_{L^2(\Gamma_h)}^2 + \| h^{-\frac32} [\bm w_h] \|_{L^2(\Gamma_h)}^2. \]
The average of $\bm y_h \in V_h$ across $e \in \mathcal{E}^i_h$ is defined as
\[ \{\bm y_h\}_e(\bm x) := \frac12 \left( \bm y_h^-(x) + \bm y_h^+(x) \right), \quad \forall \bm x \in e. \]
For a boundary edge $e \in \mathcal E_h$, we set $\{ \bm y_h\}_e := \bm y_h$.

\subsection{Discrete energy}
The discrete admissible set is defined
\begin{equation}
\mathbb A_h :=  \{\bm w_h \in V_h \mid \nabla_h \bm w_h(\bm x_T) \in St(3,2), \ \forall T \in \mathcal T_h \}.
\end{equation}
As with the admissible set $\mathbb A$, we assume that $\mathbb A_h \neq \emptyset$.
Let $\bm y_h \in V_h$ and $\bm w_h \in V_h$.
The discrete bilinear form is defined by
\begin{multline}
a_h(\bm y_h, \bm w_h) := \int_\Omega \nabla^2_h \bm y_h : \nabla^2_h \bm w_h \dd x
     - \int_{\Gamma_h}  [\nabla_h \bm y_h] : \{\nabla^2_h \bm w_h\} \bm n \dd S - \int_{\Gamma_h}  [\nabla_h \bm w_h] : \{\nabla^2_h \bm y_h\} \bm n \dd S \\
    + \eta_1 \int_{\Gamma_h} h^{-1} [\nabla_h \bm y_h] : [\nabla_h \bm w_h] \dd S + \eta_0 \int_{\Gamma_h} h^{-3} [\bm y_h]\cdot [\bm w_h] \dd S,
\end{multline}
where $\eta_0,\eta_1 > 0$ are user-defined penalty parameters.
Note that Dirichlet boundary conditions are imposed weakly due to the non-conformity of the trial space $V_h \not \subset V$.
This leads us to define the associated linear form
\[ F_h(\bm w_h) := - \int_{\Gamma_h^b}  \bm G_D \cdot (\{\nabla^2_h \bm w_h\} \bm n) \bm n \dd S + \eta_1 \int_{\Gamma_h^b} h^{-1} \bm G_D \cdot (\nabla_h \bm w_h) \bm n \dd S + \eta_0 \int_{\Gamma_h^b} h^{-3} \bm y_D \cdot \bm w_h \dd S. \]
The discrete energy is then defined as
\begin{equation}
\label{eq:discrete energy}
    E_h(\bm y_h) := \frac12 a_h(\bm y_h, \bm y_h) - (\bm f, \bm y_h) - F_h(\bm y_h).
\end{equation}

\begin{lemma}
\label{th:existence discrete minimizer}
Assuming $\mathbb A_h \neq \emptyset$, there exists $\bm y_h \in \mathbb A_h$ such that
\begin{equation}
E_h(\bm y_h) = \inf_{\bm v_h \in \mathbb A_h} E_h(\bm v_h).
\end{equation}
\end{lemma}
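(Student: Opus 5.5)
Since $V_h$ is finite dimensional, I would prove this with the direct method in finite dimensions: show that $E_h$ is continuous and coercive on $V_h$, and that $\mathbb A_h$ is closed. Continuity is immediate, because $E_h$ is a quadratic polynomial in the coefficients of $\bm y_h$. Closedness is also straightforward. The maps $\bm w_h \mapsto \nabla_h \bm w_h(\bm x_T)$ are linear, hence continuous, on $V_h$, and $St(3,2)$ is a closed subset of $\mathbb R^{3\times 2}$, being the zero set of the continuous map $U \mapsto U^\tr U - I_2$. So $\mathbb A_h$ is a finite intersection of preimages of closed sets.

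The main step is coercivity of $E_h$ with respect to $\|\cdot\|_{H^2_h(\Omega)}$, and this is where I expect the real work to lie. I would first take $\eta_0,\eta_1$ sufficiently large, as is standard for symmetric IPDG; the paper implicitly assumes this. A trace inverse inequality, $\|h^{1/2}\{\nabla^2_h \bm w_h\}\bm n\|_{L^2(\Gamma_h)} \lesssim \|\nabla_h^2 \bm w_h\|_{L^2(\Omega)}$, combined with Young's inequality, bounds the consistency terms. This gives $a_h(\bm w_h,\bm w_h) \gtrsim \|\bm w_h\|^2_{H^2_h(\Omega)}$.

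One must still check that $\|\cdot\|_{H^2_h(\Omega)}$ is a genuine norm on $V_h$ and controls $\|\bm w_h\|_{L^2(\Omega)}$. This requires $\Gamma^D$ to have positive measure, and I would assume it. If $\|\bm w_h\|_{H^2_h}=0$, then $\bm w_h$ is a global affine function with vanishing trace and normal derivative on $\Gamma^D$, hence zero. For the quantitative $L^2$ bound I would invoke the discrete Poincaré--Friedrichs inequality of \cite[Corollary~2.2]{bonito2021dg}, as in the proof of Lemma~\ref{th:energy min}. Alternatively, equivalence of norms in finite dimensions suffices, since only existence at fixed $h$ is claimed.

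The linear terms are then controlled in the same way. By Cauchy--Schwarz and the trace inverse inequality, $|(\bm f,\bm w_h)| + |F_h(\bm w_h)| \le C_h \|\bm w_h\|_{H^2_h(\Omega)}$, with $C_h$ depending on $\bm f$, $\bm y_D$, $\bm G_D$ and $h$. Hence
\[ E_h(\bm w_h) \ge c\|\bm w_h\|^2_{H^2_h(\Omega)} - C_h \|\bm w_h\|_{H^2_h(\Omega)}, \]
and $E_h$ is bounded below and coercive on $V_h$.

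To conclude, fix any $\bm v_h^0 \in \mathbb A_h$. The sublevel set $\{\bm v_h \in \mathbb A_h \mid E_h(\bm v_h) \le E_h(\bm v_h^0)\}$ is closed and bounded in the finite-dimensional space $V_h$, hence compact, and it is nonempty. The continuous function $E_h$ therefore attains its minimum on it, and this minimizer is a minimizer over all of $\mathbb A_h$.
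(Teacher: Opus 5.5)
Your proof is correct and follows essentially the same route as the paper: coercivity of $E_h$ on the finite-dimensional space $V_h$, which the paper simply cites from \cite[Lemma~2.3]{bonito2021dg} and you rederive via the trace-inverse/Young argument (with large $\eta_1$ and $\Gamma^D$ of positive measure), combined with closedness of $\mathbb A_h$. The only difference is that you spell out details the paper leaves implicit, namely that $\mathbb A_h$ is closed as a preimage of $St(3,2)$ under linear maps and that a nonempty sublevel set is compact.
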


\begin{proof}
By using \cite[Lemma~2.3]{bonito2021dg}, we have that $E_h$ is coercive in the sense $E_h(\bm y_h) \to +\infty$ if $\| \bm y_h \|_{H^2_h(\Omega)} \to +\infty$.
Noting that $\mathbb A_h$ is a closed subset of the finite-dimensional vector space $V_h$, we conclude that $E_h$ achieves a minimum over $\mathbb A_h$.
\end{proof}

\subsection{Proximal Galerkin method}
The above IPDG discretization of \eqref{eq:relaxed} delivers a discrete problem reading: for each $k = 0, 1, 2,\ldots$, find $(\bm y_h^{k+1}, \mu_h^{k+1}, \gamma_h^{k+1}) \in \mathbb A_h \times M_h \times \Upsilon_h$ such that
\begin{subequations}
\label{eq:discrete problem}
\begin{alignat}{3}
\label{eq:minimize}
  a_h(\bm{y}^{k+1}_h,\bm w_h) + (\mu^{k+1}_h,\nabla_h \bm w_h) + l_h(\nabla_h \bm y_h^k;\nabla_h \bm w_h, \gamma_h^{k+1})
  &=
  (\bm f, \bm w_h) + F_h(\bm w_h),
  \\
  \label{eq:stay in Stiefel}
  (\nabla_h \bm y_h^{k+1}, v_h) - \sum_{T \in \mathcal T_h} (\Exp_{\nabla_h \bm y_h^k(\bm x_T)} (\tau \mu_h^{k+1}),v_h)_{L^2(T)}
  &= 0,
  \\
  l_h(\nabla_h \bm y_h^k;\mu_h^{k+1}, \zeta_h) &= 0,
 \label{eq:stay tangent to Stiefel}
 \end{alignat}
\end{subequations}
for all $\bm w_h \in V_h$, $v_h \in M_h$, and $\zeta_h \in \Upsilon_h$, where the trilinear form
\[ l_h(\nabla_h \bm y_h;\mu_h, \gamma_h) := \sum_{T \in \mathcal T_h} \int_T \gamma_h : (\nabla_h \bm y_h^\tr \mu_h + \mu_h^\tr \nabla_h \bm y_h)(\bm x_T) \dd x,
\]
acts on any $\bm y_h \in V_h$, $\mu_h \in \mathbb P^1(\mathcal T_h)^{3 \times 2}$, and $\gamma_h \in \Upsilon_h$.

Our first result establishes the existence of solutions to~\eqref{eq:discrete problem}.
In Section~\ref{sub:discrete_energy_dissipation}, we use an energy dissipation property to show that the upper bound on the right-hand side of~\eqref{eq:stability} is, in fact, uniform in $k$ under mild assumptions on the initial iterate $\bm y_h^0$.

\begin{theorem}
\label{th:discrete solutions}
Let $C > 0$, independent of $k$, and $\bm y_h^k \in \mathbb A_h$.
For sufficiently small $\tau > 0$, independent of $k$, there exists a unique solution $(\bm y_h^{k+1}, \mu^{k+1}_h, \gamma_h^{k+1}) \in \mathbb A_h \times M_h \times \Upsilon_h$ to~\eqref{eq:discrete problem}.
There also exists $C > 0$, depending only on $h > 0$, such that
\[
    \| \bm y_h^{k+1} - \bm y_h^k \|_{H^2_h(\Omega)} + \| \mu^{k+1}_h \|_{L^2(\Omega)} + \| \gamma^{k+1}_h \|_{L^2(\Omega)}
    \le
    C \|\bm y_h^k\|_{H^2_h(\Omega)}
    .
\label{eq:stability}
\]

\end{theorem}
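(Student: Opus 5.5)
We will treat \eqref{eq:discrete problem} as a small nonlinear perturbation, in the parameter $\tau$, of a linear saddle-point problem, and conclude with a fixed-point (Banach) argument on the finite-dimensional space $V_h \times M_h \times \Upsilon_h$. Write $U_T := \nabla_h \bm y_h^k(\bm x_T) \in St(3,2)$. Since $\nabla_h \bm y_h^{k+1}$ is piecewise affine and $M_h$ is piecewise constant, equation \eqref{eq:stay in Stiefel} says exactly that $\nabla_h \bm y_h^{k+1}(\bm x_T) = \Exp_{U_T}(\tau \mu_h^{k+1}|_T)$ on every cell. By \eqref{eq:stay tangent to Stiefel}, $\mu_h^{k+1}|_T \in T_{U_T}St(3,2)$, because $\Upsilon_h$ contains all symmetric constants on each $T$. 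Hence any solution automatically satisfies $\bm y_h^{k+1} \in \mathbb A_h$, and the constraint on the trial space can be dropped: we look for a solution in $V_h \times M_h \times \Upsilon_h$.

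Using the Taylor expansion \eqref{eq:Taylor expansion}, we write $\Exp_{U_T}(\tau\mu) = U_T + \tau \mu + \tau R_T(\tau\mu)$ with $R_T(0)=0$. By Lemma~\ref{th:Lipschitz}, $R_T$ is Lipschitz with constant $O(K\tau|\mu|)$ on bounded sets, uniformly in $U_T$. Dividing the middle equation by $\tau$, the system becomes
\[
  \mathcal L_\tau(\bm y_h^{k+1}-\bm y_h^k, \mu, \gamma) = \mathcal F + \mathcal N(\mu),
\]
where $\mathcal L_\tau$ is the linear operator built from $a_h$, the coupling $(\mu,\nabla_h\bm w_h)$, the term $\tau^{-1}((\nabla_h \cdot)(\bm x_T),v_h) - (\mu,v_h)$, and $l_h(\nabla_h\bm y_h^k;\cdot,\cdot)$. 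The data $\mathcal F$ collects $(\bm f,\bm w_h)+F_h(\bm w_h)-a_h(\bm y_h^k,\bm w_h)$, whose norm is bounded by $C_h(1+\|\bm y_h^k\|_{H^2_h(\Omega)})$. The remainder $\mathcal N(\mu)=(R_T(\tau\mu))_T$ is a small nonlinearity.

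The main step is to show that $\mathcal L_\tau$ is invertible with a bound uniform in $k$ for $\tau$ small. The difficulty is that $\mathcal L_\tau$ depends on the Stiefel point $U_T$ through $l_h$, so we must avoid any dependence on $k$. The plan is as follows.
\begin{enumerate}[(i)]
\item Eliminate $\mu$ cellwise. The middle equation gives $\mu|_T = \tau^{-1}\nabla_h \bm z_h(\bm x_T)$ with $\bm z_h := \bm y_h^{k+1}-\bm y_h^k$. The last equation then says $\nabla_h\bm z_h(\bm x_T) \in T_{U_T}St(3,2)$.
\item The first equation becomes $a_h(\bm z_h,\bm w_h) + \tau^{-1}\sum_T |T|\,\nabla_h\bm z_h(\bm x_T):\nabla_h\bm w_h(\bm x_T) + l_h(\cdot;\nabla_h\bm w_h,\gamma) = \mathcal F(\bm w_h)$, posed on the linear subspace $Z^k$ of such $\bm z_h$. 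This is a symmetric problem with $\tau^{-1}$-augmented coercivity. It is coercive on $Z^k$, and indeed on all of $V_h$, by the coercivity of $a_h$ in $\|\cdot\|_{H^2_h(\Omega)}$ from \cite[Lemma~2.3]{bonito2021dg}.
\item Prove an inf-sup condition for $l_h$. For $\gamma\in\Upsilon_h$, choose $\bm w_h$ piecewise affine with $\nabla_h \bm w_h|_T = U_T\gamma|_T$. Then $U_T^\tr U_T\gamma+\gamma U_T^\tr U_T = 2\gamma$, which gives $l_h = 2\|\gamma\|^2_{L^2}$. Moreover $\|\bm w_h\|_{H^2_h(\Omega)} \le C_h \|\gamma\|_{L^2}$ by inverse estimates, since $|U_T|=\sqrt2$. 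This constant is independent of $k$ precisely because $U_T\in St(3,2)$.
\end{enumerate}
Brezzi theory then gives the bound $\|\bm z_h\|_{H^2_h}+\|\gamma\|+\|\mu\|\le C_h(\tau)\,\|\mathcal F\|$. Since $\|\mathcal F\| \le C_h(1+\|\bm y_h^k\|_{H^2_h(\Omega)})$, this yields the stated estimate, where for $\bm y_h^k\in\mathbb A_h$ the constant can be absorbed, because $\|\bm y_h^k\|_{H^2_h}$ is bounded below by a positive $h$-dependent constant whenever the gradients are Stiefel at barycenters.

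Finally, we define the map $\Phi(\mu,\bm z,\gamma) := \mathcal L_\tau^{-1}(\mathcal F+\mathcal N(\mu))$. The bound on $\mathcal L_\tau^{-1}$ is uniform in $\tau\le\tau_0$: we argue through the scaled unknown $\tau\mu$, so that the $\tau^{-1}$ terms only strengthen coercivity. Lemma~\ref{th:Lipschitz} gives that $\mathcal N$ has Lipschitz constant $\lesssim K\tau\,\|\mu\|$, uniformly in $k$. Hence on a ball of radius $2C\|\mathcal F\|$, $\Phi$ is a self-map and a contraction once $\tau \lesssim 1/(K C^2\|\mathcal F\|)$. Existence, uniqueness (within this ball), and the stability estimate then follow from Banach's fixed-point theorem.

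The main obstacle is the dependence of this $\tau$ threshold on $\|\bm y_h^k\|$. Removing it needs the uniform-in-$k$ bound from the later energy-dissipation section. Here I would state the smallness as depending on an a priori bound $C$ on $\|\bm y_h^k\|_{H^2_h}$, which is how I read the hypothesis ``Let $C>0$, independent of $k$''.
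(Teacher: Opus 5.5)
Your architecture matches the paper's. You linearize at $(\bm y_h^k,0,0)$, invert the linearized saddle-point operator, control the nonlinearity with Lemma~\ref{th:Lipschitz}, and close with a contraction; the paper packages that last step as \cite[Theorem~2.1]{caloz1997numerical}. Eliminating $\mu$ cellwise and applying Brezzi theory to $(\bm z_h,\gamma_h)$ with $b_\tau(\bm z,\bm w):=a_h(\bm z,\bm w)+\tau^{-1}\sum_T|T|\,\nabla_h\bm z(\bm x_T):\nabla_h\bm w(\bm x_T)$ is a legitimate way to get invertibility for each fixed $\tau$. Your step (iii) also correctly reproduces Proposition~\ref{th:inf-sup 2}.

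The gap is the claim that the bound on $\mathcal L_\tau^{-1}$ is uniform in $\tau$. Since $\|b_\tau\|\sim\tau^{-1}$, Brezzi's constants degenerate. The most that coercivity gives is $\kappa\|\bm z_h\|^2_{H^2_h(\Omega)}+\tau^{-1}\sum_T|T|\,|\nabla_h\bm z_h(\bm x_T)|^2\le\|\mathcal F\|\,\|\bm z_h\|_{H^2_h(\Omega)}$, i.e.\ $\|\tau\mu\|_{L^2(\Omega)}\lesssim\tau^{1/2}\|\mathcal F\|$. Hence you only get $\|\mu\|_{L^2(\Omega)}\lesssim\tau^{-1/2}\|\mathcal F\|$, and through the $l_h$ inf-sup also $\|\gamma\|_{L^2(\Omega)}\lesssim\tau^{-1/2}\|\mathcal F\|$. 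Passing to the scaled unknown $\tau\mu$ controls $\tau\mu$, not $\mu$ or $\gamma$.

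This is not a technicality. As $\tau\to0$ the middle equation freezes the barycentric gradients, and $\mu$ becomes the Lagrange multiplier of that constraint. A $\tau$-robust bound on it must therefore come from an inf-sup condition, not from coercivity. As written, two things fail:
\begin{itemize}
\item your stability constant for $\mu$ and $\gamma$ blows up as $\tau\to0$, whereas the theorem asks for $C$ depending only on $h$;
\item your contraction condition $\tau\lesssim 1/(KC^2\|\mathcal F\|)$ is vacuous when $C\sim\tau^{-1/2}$, because $KC^2\tau\|\mathcal F\|$ is then independent of $\tau$.
\end{itemize}

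The missing ingredient is the paper's Lemma~\ref{th:inf-sup 3}, an inf-sup condition for $(\mu,\nabla_h\bm w_h)$ on $\Ker(L_2)\times\Ker(L_1)$. It follows from the same construction as your step (iii). For $\mu\in\Ker(L_2)$ set $\bm w_h|_T:=\mu|_T(\bm x-\bm x_T)$. Then $\bm w_h\in\Ker(L_1)$, $(\mu,\nabla_h\bm w_h)=\|\mu\|^2_{L^2(\Omega)}$, and $\|\bm w_h\|_{H^2_h(\Omega)}\lesssim h_{\min}^{-1}\|\mu\|_{L^2(\Omega)}$. Testing the first equation with this $\bm w_h$ annihilates the $\gamma$-term and yields $h_{\min}\|\mu\|_{L^2(\Omega)}\lesssim\|\mathcal F\|+\|\bm z_h\|_{H^2_h(\Omega)}$ uniformly in $\tau$. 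Your step (iii) then bounds $\gamma$ uniformly. These are the $\tau$-robust bounds \eqref{eq:bounds}, and with them your contraction closes as you state it.

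If you only want existence, you could instead stay with $\nu=\tau\mu$ and track separately that the map from $\mathcal F$ to $\nu$ is $O(\tau^{1/2})$, while the map from middle-equation data to $\nu$ is $O(1)$. That gives a contraction, but still no $\tau$-independent bound on $\mu$ and $\gamma$.

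The rest of your argument is fine. Your use of $\inf_{\mathbb A_h}\|\cdot\|_{H^2_h(\Omega)}>0$ to turn the affine bound $C_h(1+\|\bm y_h^k\|_{H^2_h(\Omega)})$ into the stated linear one is a detail the paper passes over.
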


We postpone the proof of Theorem~\ref{th:discrete solutions} to Section~\ref{sub:proof_of_theorem_ref_th_discrete_solutions} since we need a few preliminary results for it.
Instead, we state the following corollary.

\begin{coro}
\label{coro}
Let $\bm y^k_h \in \mathbb A_h$.
Assume that the solution $(\bm y^{k+1}_h, \mu_h^{k+1}, \gamma_h^{k+1}) \in \mathbb A_h \times M_h \times \Upsilon_h$ of \eqref{eq:discrete problem} satisfies $\mu_h^{k+1} = 0$.
Then $\bm y^k_h = \bm y^{k+1}_h \in \mathbb A_h$ is a critical point of $E_h$ over $\mathbb A_h$.
\end{coro}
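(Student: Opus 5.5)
The plan is to read off everything from the three equations of \eqref{eq:discrete problem} with $\mu_h^{k+1}=0$. The argument has three steps: show that $\nabla_h \bm y_h^{k+1}$ and $\nabla_h \bm y_h^k$ agree at every barycenter, use this together with uniqueness in Theorem~\ref{th:discrete solutions} to obtain $\bm y_h^{k+1}=\bm y_h^k$, and then derive the criticality from \eqref{eq:minimize}.

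\textbf{Step 1: barycentric gradients agree.} Since $\Exp_U(0)=U$, equation \eqref{eq:stay in Stiefel} becomes
\[
\int_T \nabla_h \bm y_h^{k+1}: v_h \dd x = |T|\, \nabla_h \bm y_h^k(\bm x_T): v_h
\]
for all constants $v_h$ on each $T$. Because $\bm y_h^{k+1}|_T\in\mathbb P^2(T)^3$, its gradient is affine on $T$, so its cell average equals its value at the barycenter. Hence $\nabla_h \bm y_h^{k+1}(\bm x_T)=\nabla_h \bm y_h^k(\bm x_T)\in St(3,2)$ for every $T\in\mathcal T_h$. This confirms $\bm y_h^{k+1}\in\mathbb A_h$, and it shows that the trilinear forms $l_h(\nabla_h\bm y_h^k;\cdot,\cdot)$ and $l_h(\nabla_h\bm y_h^{k+1};\cdot,\cdot)$ coincide, since $l_h$ only samples its first argument at barycenters.

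\textbf{Step 2: the identity $\bm y_h^k=\bm y_h^{k+1}$.} This is the step I expect to be the main obstacle. The system \eqref{eq:discrete problem} depends on the base iterate only through the values $\nabla_h\bm y_h^k(\bm x_T)$. By Step~1, the system posed at step $k+1$ is therefore literally the same system as the one at step $k$. The triple $(\bm y_h^{k+1},0,\gamma_h^{k+1})$ solves it, and by the uniqueness part of Theorem~\ref{th:discrete solutions} (for the admissible small $\tau$) it is the only solution. Consequently the iteration is stationary from step $k$ on: the map $\bm y_h^k\mapsto\bm y_h^{k+1}$ returns $\bm y_h^{k+1}$ itself, i.e.\ $\bm y_h^{k+1}$ is a fixed point of the proximal Galerkin update. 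To pass from this to the full equality $\bm y_h^k=\bm y_h^{k+1}$ as elements of $V_h$, I would first try a contradiction argument via a discrete energy dissipation estimate: if $\bm y_h^k\neq\bm y_h^{k+1}$ while their barycentric gradients coincide, compare $E_h(\bm y_h^k)$ and $E_h(\bm y_h^{k+1})$ using that $\bm y_h^{k+1}$ is stationary for $E_h$ on the affine set of functions sharing these barycentric data. I am not confident that this recovers the full identity; failing it, equality of $\bm y_h^k$ and $\bm y_h^{k+1}$ holds in the sense of the iteration, namely both determine the same discrete problem and hence the same subsequent iterates.

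\textbf{Step 3: criticality.} With $\mu_h^{k+1}=0$, equation \eqref{eq:minimize} reads
\[
a_h(\bm y_h^{k+1},\bm w_h)+l_h(\nabla_h\bm y_h^{k+1};\nabla_h\bm w_h,\gamma_h^{k+1})=(\bm f,\bm w_h)+F_h(\bm w_h)\qquad\forall\,\bm w_h\in V_h .
\]
Let $\bm w_h$ be a tangent direction to $\mathbb A_h$ at $\bm y_h^{k+1}$, that is, $\nabla_h\bm w_h(\bm x_T)\in T_{\nabla_h\bm y_h^{k+1}(\bm x_T)}St(3,2)$ for all $T$. For such $\bm w_h$ the integrand of $l_h$ vanishes pointwise at each barycenter, since it is $\gamma_h^{k+1}$ contracted with $U^\tr W+W^\tr U=0$. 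Hence $dE_h(\bm y_h^{k+1})[\bm w_h]=a_h(\bm y_h^{k+1},\bm w_h)-(\bm f,\bm w_h)-F_h(\bm w_h)=0$ for every tangent $\bm w_h$. This is precisely the statement that $\bm y_h^{k+1}$ is a critical point of $E_h$ over the manifold $\mathbb A_h$. The same holds at $\bm y_h^k$, because the tangent spaces coincide by Step~1 and $\bm y_h^k=\bm y_h^{k+1}$.
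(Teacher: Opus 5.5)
Your Step 2 is a genuine gap, and it cannot be closed, because the identity $\bm y_h^k=\bm y_h^{k+1}$ does not follow from the hypotheses. Your Step 1 already extracts everything that \eqref{eq:stay in Stiefel} says when $\mu_h^{k+1}=0$, namely that the barycentric gradients agree. The system \eqref{eq:discrete problem} depends on $\bm y_h^k$ only through these values.

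Here is a counterexample. Let $\bm y_h^\ast$ be a critical point of the Lagrangian with multiplier $\gamma_h^\ast$. For instance, a minimizer from Lemma~\ref{th:existence discrete minimizer} is a KKT point, because the constraint linearization is onto by Proposition~\ref{th:inf-sup 2}. Put $\bm y_h^k:=\bm y_h^\ast+\bm c$ with a constant $\bm c\neq\bm 0$. Then $\bm y_h^k\in\mathbb A_h$, and $(\bm y_h^\ast,0,\gamma_h^\ast)$ solves \eqref{eq:discrete problem}. So $\mu_h^{k+1}=0$, but $\bm y_h^{k+1}=\bm y_h^\ast\neq\bm y_h^k$. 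Moreover $\bm y_h^k$ is not critical: the tangent direction $\bm w_h=\bm c$ gives $dE_h(\bm y_h^k)[\bm c]=a_h(\bm c,\bm c)=\eta_0\int_{\Gamma_h^b}h^{-3}|\bm c|^2\dd S>0$.

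Consequently, neither your energy comparison nor your uniqueness argument can deliver the claim; the latter only yields $\bm y_h^{k+2}=\bm y_h^{k+1}$. Your fallback, equality ``in the sense of the iteration'', is strictly weaker than what is stated, and the last sentence of your Step 3 fails. The paper's proof has the same defect. It asserts $\bm y_h^k=\bm y_h^{k+1}$ ``by \eqref{eq:stay in Stiefel} and $\mu_h^{k+1}=0$'', which gives only your Step 1. Its remaining argument reads \eqref{eq:minimize} with $\mu_h^{k+1}=0$ as the KKT system with $l_h(\nabla_h\bm y_h^{k+1};\cdot,\cdot)$. That is the Lagrangian version of your Step 3, and it correctly shows that $\bm y_h^{k+1}$ is critical.

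What is true, and what your energy idea was reaching for, is the following. Let $Z_h:=\{\bm w_h\in V_h:\nabla_h\bm w_h(\bm x_T)=0\ \forall T\in\mathcal T_h\}$. For $\bm w_h\in Z_h$, one has $(\mu_h,\nabla_h\bm w_h)=0$ for all $\mu_h\in M_h$, since the cell mean of the affine field $\nabla_h\bm w_h$ is its barycentric value. All $l_h$ terms also vanish on $Z_h$. Hence every \emph{output} $\bm y_h^j$ of \eqref{eq:discrete problem} satisfies $a_h(\bm y_h^j,\bm w_h)=(\bm f,\bm w_h)+F_h(\bm w_h)$ on $Z_h$, i.e.\ it is the unique minimizer of $E_h$ on $\bm y_h^j+Z_h$.

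Stationarity of $\bm y_h^{k+1}$ alone only gives $E_h(\bm y_h^{k+1})\le E_h(\bm y_h^k)$. If $\bm y_h^k$ is also an output of the scheme (any $k\ge1$), subtract the two identities and test with $\bm d_h:=\bm y_h^{k+1}-\bm y_h^k$, which lies in $Z_h$ by your Step 1. This gives $a_h(\bm d_h,\bm d_h)=0$, hence $\bm d_h=0$ by coercivity. With this extra hypothesis on $\bm y_h^k$, your Steps~1 and~3 give a complete proof. Equivalently, they give a complete proof of the weakened conclusion ``$\bm y_h^{k+1}$ is critical and shares the barycentric gradients of $\bm y_h^k$''.
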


\begin{proof}
We note that $\bm y^k_h = \bm y_h^{k+1} \in \mathbb A_h$ by \eqref{eq:stay in Stiefel} and $\mu_h^{k+1} = 0$.
Using \eqref{eq:minimize} and $\mu_h^{k+1} = 0$, one also has
\[ a_h(\bm{y}^{k+1}_h,\bm w_h) + l_h(\nabla_h \bm y^{k+1}_h;\nabla_h \bm w_h, \gamma^{k+1}_h) = (\bm f, \bm w_h) + F_h(\bm w_h), \quad \forall \bm w_h \in V_h. \]
Thus, $(\bm y_h^{k+1}, \gamma_h^{k+1})$ is a critical point of the Lagrangian associated to minimizing $E_h$ over $\mathbb A_h$.
\end{proof}

\begin{remark}[Convergence criterion]
\label{rk:convergence}
Corollary \ref{coro} in conjunction with \eqref{eq:minimize} gives an obvious convergence criterion in the form of $\| \mu_h^{k+1} \|_{L^2(\Omega)} < \texttt{tol.}$, where $\texttt{tol.} > 0$ is a user-defined tolerance parameter.
\end{remark}

\subsection{Proof of Theorem~\ref{th:discrete solutions}}
\label{sub:proof_of_theorem_ref_th_discrete_solutions}

\begin{lemma}
\label{th:matrix lemma}
Let $A, B \in \mathbb R^{3 \times 2}$ such that $B$ has full rank.
One has
\[ |A^\tr B + B^\tr A| \ge 2 \sigma_2(B) |A|, \]
where $| \cdot |$ is the Frobenius norm on matrices and $\sigma_2(B) > 0$ is the smallest singular value of $B$.
\end{lemma}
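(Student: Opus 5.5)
The inequality cannot hold as stated for arbitrary $A$, so my plan is first to exhibit counterexamples, then prove the restricted version that the inf-sup argument for $l_h$ actually needs.

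\emph{Counterexamples.} Take $B = \begin{pmatrix} 1&0\\0&1\\0&0\end{pmatrix} \in St(3,2)$, so $\sigma_2(B)=1$.
\begin{itemize}
\item For $A = \begin{pmatrix} 0&1\\-1&0\\0&0\end{pmatrix}$ one has $A^\tr B = -B^\tr A$, hence $A^\tr B + B^\tr A = 0$ while $|A| = \sqrt2$.
\item Likewise $A = \begin{pmatrix} 0&0\\0&0\\1&0\end{pmatrix}$ satisfies $B^\tr A = 0$.
\end{itemize}
Both $A$ lie in $T_B St(3,2) = \{A \mid B^\tr A + A^\tr B = 0\}$. This tangent space has dimension $3$ and is exactly the kernel of the map $A \mapsto A^\tr B + B^\tr A$. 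So no lower bound of the form $c|A|$ can hold on all of $\mathbb R^{3\times 2}$.

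\emph{Intended statement.} The lemma is needed to control $\gamma_h$ through $l_h(\nabla_h \bm y_h^k;\mu_h,\gamma_h)$, which calls for an inf-sup bound on the orthogonal complement of that kernel. That complement is $\{A = BS \mid S \in \mathbb R^{2\times 2}_{\sym}\}$. I would therefore prove the following.

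\begin{lemma}
For every $S \in \mathbb R^{2\times2}_{\sym}$ and $A = BS$,
\[
|A^\tr B + B^\tr A| \ge 2\sigma_2(B)^2 |S| \ge 2\frac{\sigma_2(B)^2}{\sigma_1(B)} |A|,
\]
and in particular $|A^\tr B + B^\tr A| \ge 2|A|$ when $B \in St(3,2)$.
\end{lemma}

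\emph{Plan of proof.}
\begin{enumerate}
\item Set $G := B^\tr B$. Since $B$ has full rank, $G$ is symmetric positive definite with eigenvalues $\lambda_i = \sigma_i(B)^2$.
\item For $A = BS$ with $S$ symmetric, $A^\tr B + B^\tr A = SG + GS$.
\item Diagonalize $G = Q\Lambda Q^\tr$ with $Q$ orthogonal, and put $\tilde S = Q^\tr S Q$. The Frobenius norm is orthogonally invariant, so $|SG + GS| = |\tilde S\Lambda + \Lambda\tilde S|$.
\item The $(i,j)$ entry of $\tilde S\Lambda + \Lambda\tilde S$ is $(\lambda_i+\lambda_j)\tilde S_{ij}$, and $\lambda_i + \lambda_j \ge 2\lambda_{\min} = 2\sigma_2(B)^2$. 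Hence $|SG + GS| \ge 2\sigma_2^2 |\tilde S| = 2\sigma_2^2 |S|$.
\item Finally $|A| = |BS| \le \sigma_1(B) |S|$, which gives the second inequality.
\end{enumerate}

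\emph{Use in the inf-sup bound.} Given $\gamma$, test with $\mu = B\gamma$. Then
\[
l = \gamma : (G\gamma + \gamma G) \ge 2\sigma_2^2|\gamma|^2 \qquad\text{and}\qquad |\mu| \le \sigma_1|\gamma|,
\]
which gives the inf-sup bound directly and is what the proof of Theorem~\ref{th:discrete solutions} requires.

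\emph{Main obstacle.} The computations above are routine. The real issue is recognising that the statement needs the restriction to $\operatorname{range}(S \mapsto BS)$, or equivalently to $(T_B St(3,2))^\perp$, and then checking that every later use of the lemma only applies it to such $A$.
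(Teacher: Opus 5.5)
Your counterexamples are correct: the map $A \mapsto A^\tr B + B^\tr A$ has a three-dimensional kernel, so the lemma is false as stated. The paper's proof breaks at exactly the two steps your examples isolate.

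\textbf{Where the paper's proof fails.} The paper claims $|A^\tr B + B^\tr A| = 2|B^\tr A|$. This confuses the supremum over all $C$ with the supremum over symmetric $C$. The symmetric/skew computation actually gives $(A^\tr B + B^\tr A):C = 2\,B^\tr A:\sym(C)$, hence $|A^\tr B + B^\tr A| = 2|\sym(B^\tr A)|$. Your first example, where $B^\tr A$ is skew, makes this vanish while $|B^\tr A| = \sqrt{2}$. The paper's next bound, $|\Sigma^\tr U^\tr A| \ge \sigma_2 |U^\tr A|$, also fails, because $\Sigma^\tr \in \mathbb R^{2\times 3}$ discards the third row of $U^\tr A$. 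Your second example has $B^\tr A = 0$.

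\textbf{Your final check does not go through verbatim.} Proposition~\ref{th:inf-sup 1} applies the lemma to arbitrary $\mu_h \in M_h$. As stated, with the infimum taken over $\mu_h$, that proposition is false, because $\Ker(L_2) \neq \{0\}$; the paper itself works in $\Ker(L_2)$ in Lemma~\ref{th:inf-sup 3} and in $Y_h$. The Nicolaides-type argument in the proof of Theorem~\ref{th:discrete solutions} actually needs the reversed condition, $\inf_{\zeta_h}\sup_{\mu_h}$. Your choice $\mu_h|_T = \nabla_h \bm y_h^k(\bm x_T)\,\zeta_h|_T$ delivers it: $l_h(\nabla_h \bm y_h^k;\mu_h,\zeta_h) = 2\|\zeta_h\|_{L^2(\Omega)}^2$ and $\|\mu_h\|_{L^2(\Omega)} = \|\zeta_h\|_{L^2(\Omega)}$. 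This gives inf-sup constant $2$, uniformly in $h$ and $k$.

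\textbf{Your restricted lemma.} The lemma and its proof are correct, but the constant $2\sigma_2(B)^2/\sigma_1(B)$ is lossy. In the eigenbasis of $G$ one has $|BS|^2 = \operatorname{tr}(SGS) = \sum_{i,j} \tfrac{\lambda_i+\lambda_j}{2}\tilde S_{ij}^2$. Combined with $(\lambda_i+\lambda_j)^2 \ge 4\lambda_{\min}\cdot\tfrac{\lambda_i+\lambda_j}{2}$, this yields $|A^\tr B + B^\tr A| \ge 2\sigma_2(B)|A|$ for every $A = BS$. So the paper's inequality, with its constant, holds exactly on the orthogonal complement of the kernel. In general one gets $|A^\tr B + B^\tr A| \ge 2\sigma_2(B)|A - A_0|$, where $A_0$ is the orthogonal projection of $A$ onto the kernel.

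For $B \in St(3,2)$, the only case used, this is even an identity: $|A^\tr B + B^\tr A| = 2|\sym(B^\tr A)| = 2|A - \Pi_B(A)|$, with $\Pi_B$ from \eqref{eq:projection}. There both constants equal $2$, so the refinement does not affect your argument.
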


\begin{proof}
Recall that $|A^\tr B + B^\tr A| = \sup_{|C| = 1} (A^\tr B + B^\tr A) : C$.
One has
\[ A^\tr B:C = \mathrm{tr}(B^\tr A C) = \mathrm{tr}(C A B^\tr) = C : B^\tr A, \]
if $C$ is symmetric.
Likewise, $A^\tr B:C = - C : B^\tr A$ if $C$ is skew-symmetric.
Therefore,
\[
    |A^\tr B + B^\tr A|
    =
    \sup_{|C| = 1} (A^\tr B + B^\tr A) : C
    =
    2\sup_{|C| = 1} C: B^\tr A
    =
    2 |B^\tr A|
    .
    \]
We now write the SVD of $B$.
There exists $U \in O_3(\mathbb R)$ and $V \in O_2(\mathbb R)$ such that $B = U \Sigma V^\tr$, with
\[ \Sigma = \begin{pmatrix}
\sigma_1 & 0 \\ 0 & \sigma_2 \\ 0 & 0
\end{pmatrix}, \]
where $\sigma_1 \ge \sigma_2 > 0$ are the singular values of $B$.
$\sigma_2 > 0$ since $B$ has full rank.
Thus,
\[ |B^\tr A| = |\Sigma^\tr U^\tr A| \ge \sigma_2 |U^\tr A| = \sigma_2 |A|, \]
since $U \in O_3(\mathbb R)$ and $V \in O_2(\mathbb R)$.
The result follows.
\end{proof}

\begin{proposition}
\label{th:inf-sup 1}
Let $\bm y_h^k \in \mathbb A_h$.
There exists $\alpha > 0$, such that
\[ \inf_{\mu_h \in M_h} \sup_{\zeta_h \in \Upsilon_h} \frac{l_h(\nabla_h \bm y_h^k; \mu_h, \zeta_h)}{\| \mu_h \|_{L^2(\Omega)} \|\zeta_h \|_{L^2(\Omega)}} \ge \alpha.  \]
\end{proposition}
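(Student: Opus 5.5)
The plan is to build, for each $\mu_h \in M_h$, an explicit test multiplier $\zeta_h \in \Upsilon_h$ cell by cell. We then reduce the ratio to the pointwise matrix inequality of Lemma~\ref{th:matrix lemma}.

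First, fix $\mu_h \in M_h$ and write $B_T := \nabla_h \bm y_h^k(\bm x_T)$ for each $T \in \mathcal T_h$. Since $\bm y_h^k \in \mathbb A_h$, we have $B_T \in St(3,2)$. Hence $B_T^\tr B_T = I_2$, so $B_T$ has full rank and both singular values equal to $1$; in particular $\sigma_2(B_T) = 1$ uniformly in $T$, $h$ and $k$. Since $\mu_h$ is piecewise constant, $\mu_h(\bm x_T) = \mu_h|_T$. I would then define
\[
\zeta_h|_T := B_T^\tr \mu_h|_T + \mu_h|_T^\tr B_T ,
\]
which is constant and symmetric on each cell, so $\zeta_h \in \Upsilon_h$. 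With this choice the trilinear form becomes a square:
\[
l_h(\nabla_h \bm y_h^k; \mu_h, \zeta_h) = \sum_{T} |T|\, |B_T^\tr \mu_h + \mu_h^\tr B_T|^2 = \|\zeta_h\|_{L^2(\Omega)}^2 .
\]
If $\zeta_h \neq 0$, the quotient therefore equals $\|\zeta_h\|_{L^2(\Omega)}/\|\mu_h\|_{L^2(\Omega)}$.

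Second, I would bound $\|\zeta_h\|_{L^2(\Omega)}$ from below by applying Lemma~\ref{th:matrix lemma} cellwise with $A = \mu_h|_T$ and $B = B_T$. This gives $|\zeta_h|_T| \ge 2\sigma_2(B_T)|\mu_h|_T| = 2|\mu_h|_T|$. Squaring, multiplying by $|T|$ and summing over $T$ yields $\|\zeta_h\|_{L^2(\Omega)} \ge 2\|\mu_h\|_{L^2(\Omega)}$. Taking the supremum over $\zeta_h$ and then the infimum over $\mu_h$ would give the claim with $\alpha = 2$. This constant is independent of $h$, $k$ and $\bm y_h^k$, which is what the later uniformity in $\tau$ in Theorem~\ref{th:discrete solutions} needs.

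The main obstacle is the pointwise lower bound itself. The map $A \mapsto A^\tr B + B^\tr A$ vanishes exactly on the tangent space $T_B St(3,2)$. So the estimate can only hold for $\mu_h$ whose cell values are orthogonal to $T_{B_T}St(3,2)$, i.e.\ of the form $B_T S$ with $S$ symmetric. On that normal part, $|B_T^\tr (B_T S) + (B_T S)^\tr B_T| = 2|S| = 2|B_T S|$, so the argument goes through verbatim with $\alpha = 2$.

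When writing the proof, I would therefore state explicitly how Lemma~\ref{th:matrix lemma} is being applied. The infimum is effectively taken over $\mu_h$ orthogonal to the kernel $\{\mu_h \in M_h : \mu_h|_T \in T_{B_T}St(3,2)\ \forall T\}$. This kernel is precisely the tangent constraint~\eqref{eq:stay tangent to Stiefel}, and it is the only part of $\mu_h$ that the multiplier $\gamma_h^{k+1}$ has to control. That restricted form is what is then used in the saddle-point (Brezzi-type) argument for Theorem~\ref{th:discrete solutions}.
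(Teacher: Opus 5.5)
Your choice $\zeta_h|_T = B_T^\tr\mu_h|_T + \mu_h|_T^\tr B_T$ is the paper's argument; the paper phrases it as the equality case of Cauchy--Schwarz. Your second step, like the paper's, rests on Lemma~\ref{th:matrix lemma}. That lemma is false, and so is the proposition as stated, for every $\bm y_h^k \in \mathbb A_h$ and every mesh. Take $\bm y_h^k(\bm x) = (x_1,x_2,0) \in \mathbb A_h$ and a constant $\mu_h$:
\[
B_T = \begin{pmatrix} 1 & 0 \\ 0 & 1 \\ 0 & 0 \end{pmatrix}, \qquad \mu_h|_T = \begin{pmatrix} 0 & 0 \\ 0 & 0 \\ 1 & 0 \end{pmatrix}, \qquad T \in \mathcal T_h .
\]
Then $B_T^\tr \mu_h|_T = 0$, so $l_h(\nabla_h \bm y_h^k;\mu_h,\zeta_h) = 0$ for all $\zeta_h \in \Upsilon_h$ even though $\mu_h \neq 0$. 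The infimum is therefore $0$. This is forced by dimensions: $L_2$ maps $6$ degrees of freedom per cell to $3$, so it cannot be injective, and its kernel is exactly the tangent fields. In the lemma's proof, the supremum over $C$ gives $2|\sym(B^\tr A)|$, not $2|B^\tr A|$. Also, $|\Sigma^\tr U^\tr A| \ge \sigma_2|U^\tr A|$ fails because $\Sigma^\tr$ discards the third row of $U^\tr A$.

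Your third paragraph diagnoses this correctly, but it contradicts your second paragraph. Saying the infimum is ``effectively'' taken over the orthogonal complement changes the statement; it does not prove the one given. You should say outright that the claim is false as written, give the counterexample, and replace it.

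The correct replacement is the bound with the infimum restricted to $(\Ker L_2)^\perp = \{\mu_h \in M_h : \mu_h|_T = B_T S_T,\ S_T = S_T^\tr\}$. Your normal-part computation already proves it with $\alpha = 2$. An equivalent and more useful form puts the infimum on the multiplier. This matches the orientation of Proposition~\ref{th:inf-sup 2} and is what Nicolaides' theorem, invoked in the proof of Theorem~\ref{th:discrete solutions}, actually requires of the constraint form, namely surjectivity of $L_2$:
\[
\inf_{\zeta_h \in \Upsilon_h} \sup_{\mu_h \in M_h} \frac{l_h(\nabla_h \bm y_h^k;\mu_h,\zeta_h)}{\|\mu_h\|_{L^2(\Omega)}\|\zeta_h\|_{L^2(\Omega)}} \ge 2 .
\]
To prove it, take $\mu_h|_T := B_T\zeta_h|_T$. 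Since $B_T^\tr B_T = I$ and $\zeta_h$ is symmetric, on each cell $B_T^\tr\mu_h + \mu_h^\tr B_T = 2\zeta_h$ and $|B_T\zeta_h| = |\zeta_h|$, so the quotient equals $2$. Equality is in fact attained, because $L_2^\ast\zeta_h|_T = 2B_T\zeta_h|_T$. The same identity shows that every nonzero singular value of $L_2$ equals $2$, which is why your restricted form and this one coincide.
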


\begin{proof}
Let $\mu_h \in M_h$.
By using the equality case in the Cauchy--Schwarz inequality, one has
\begin{equation*}
\begin{aligned}
\sup_{\zeta_h \in \Upsilon_h} \frac{ l_h(\nabla_h \bm y_h^k;\mu_h,\zeta_h)}{\|\zeta_h\|_{L^2(\Omega)}}
& = \left( \sum_{T \in \mathcal T_h} |T| |[(\nabla_h \bm y_h^k)^\tr \mu_h + \mu_h^\tr \nabla_h \bm y_h^k](\bm x_T)|^2 \right)^{\frac12}.
\end{aligned}
\end{equation*}
Let $T \in \mathcal T_h$.
We now apply Lemma \ref{th:matrix lemma} to get
\[ |[(\nabla_h \bm y_h^k)^\tr \mu_h + \mu_h^\tr \nabla_h \bm y_h^k](\bm x_T)| \ge 2 \sigma_2(\nabla_h \bm y_h^k(\bm x_T)) |\mu_h(\bm x_T)| = 2 |\mu_h(\bm x_T)|, \]
since $\bm y_h^k \in \mathbb A_h$.
Therefore, one has
\[ \sup_{\zeta_h \in \Upsilon_h} \frac{ l_h(\nabla_h \bm y_h^k;\mu_h,\zeta_h)}{\|\zeta_h\|_{L^2(\Omega)}} \ge 2 \| \mu_h \|_{L^2(\Omega)}.  
\qedhere
\]
\end{proof}

We recall the following result from \cite[Theorem~5.4]{MR4839135}.
\begin{proposition}
\label{th:inf-sup 2}
Let $\bm y_h^k \in \mathbb A_h$.
There exists $\beta > 0$, such that $\beta_h := \beta h_{\min}$ satisfies
\[ \inf_{\gamma_h \in \Upsilon_h} \sup_{\bm w_h \in V_h} \frac{l_h(\nabla_h \bm y_h^k; \nabla_h \bm w_h, \gamma_h)}{\| \bm w_h \|_{H^2_h(\Omega)} \|\gamma_h \|_{L^2(\Omega)}} \ge \beta_h,  \]
where $h_{\min} := \min_{T \in \mathcal T_h} h_T$, where $h_T$ is the size of the cell $T \in \mathcal T_h$.
\end{proposition}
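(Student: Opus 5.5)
The plan is to prove the inf-sup bound by an explicit, cellwise construction of a test function $\bm w_h \in V_h$ for each $\gamma_h \in \Upsilon_h$. Since $V_h = \mathbb P^2(\mathcal T_h)^3$ is fully discontinuous, we may choose $\bm w_h$ independently on each cell. We then control its discrete $H^2_h$ norm using only inverse-type scaling arguments. The loss of a factor $h_{\min}$ will come entirely from the jump terms in $\|\cdot\|_{H^2_h(\Omega)}$.

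\textbf{Construction.} Fix $\gamma_h \in \Upsilon_h$ and write $\gamma_T := \gamma_h|_T$ and $U_T := \nabla_h \bm y_h^k(\bm x_T)$. Since $\bm y_h^k \in \mathbb A_h$, we have $U_T \in St(3,2)$, so $U_T^\tr U_T = I_2$. On each $T \in \mathcal T_h$ we set
\[ \bm w_h|_T(\bm x) := U_T \gamma_T (\bm x - \bm x_T), \]
which is affine, hence lies in $\mathbb P^2(T)^3$. Then $\nabla_h \bm w_h|_T = U_T\gamma_T$ is constant, and by symmetry of $\gamma_T$,
\[ U_T^\tr (U_T \gamma_T) + (U_T\gamma_T)^\tr U_T = 2\gamma_T . \]
Therefore
\[ l_h(\nabla_h \bm y_h^k; \nabla_h \bm w_h, \gamma_h) = \sum_{T} |T|\, \gamma_T : 2\gamma_T = 2\|\gamma_h\|_{L^2(\Omega)}^2 . \]

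\textbf{Norm bound.} We estimate the three terms of $\|\bm w_h\|_{H^2_h(\Omega)}$ in turn.
\begin{enumerate}[(i)]
\item The broken Hessian vanishes, since $\bm w_h$ is affine on each cell.
\item On an edge $e$ adjacent to $T$ (or to $T^\pm$), the gradient jump satisfies $|[\nabla_h \bm w_h]| \le |U_{T^+}\gamma_{T^+}| + |U_{T^-}\gamma_{T^-}|$, and $|U_T \gamma_T| = |\gamma_T|$ because $U_T$ has orthonormal columns. Hence $\|h^{-1/2}[\nabla_h \bm w_h]\|_{L^2(\Gamma_h)}^2 \lesssim \sum_T h^{-1} h_T |\gamma_T|^2$.
\item The value jump satisfies $|\bm w_h|_T| \le h_T |\gamma_T|$ on $\partial T$. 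Hence $\|h^{-3/2}[\bm w_h]\|_{L^2(\Gamma_h)}^2 \lesssim \sum_T h^{-3} h_T^3 |\gamma_T|^2$.
\end{enumerate}
Only edges in $\Gamma_h$, namely interior and Dirichlet edges, appear, and every edge is counted a bounded number of times by shape regularity. By quasi-uniformity, $h_T \simeq h \simeq h_{\min}$ and $|T| \simeq h_T^2 \gtrsim h_{\min}^2$. Consequently
\[ \|\bm w_h\|_{H^2_h(\Omega)}^2 \lesssim \sum_T |\gamma_T|^2 \lesssim h_{\min}^{-2} \sum_T |T|\,|\gamma_T|^2 = h_{\min}^{-2}\|\gamma_h\|_{L^2(\Omega)}^2 . \]

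\textbf{Conclusion.} Combining the two displays gives
\[ \frac{l_h(\nabla_h \bm y_h^k;\nabla_h \bm w_h,\gamma_h)}{\|\bm w_h\|_{H^2_h(\Omega)}\|\gamma_h\|_{L^2(\Omega)}} \ge \beta h_{\min}, \]
with $\beta$ depending only on the shape-regularity and quasi-uniformity constants. In particular, $\beta$ is independent of $k$ and of $\bm y_h^k$, since only the orthonormality $U_T^\tr U_T = I_2$ is used.

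The only delicate point is the bookkeeping between the global $h$ in the penalty weights, the local $h_T$, and $h_{\min}$. Quasi-uniformity is used precisely to compare these quantities. I do not expect any deeper obstacle, and there is no hope of avoiding the factor $h_{\min}$ with this piecewise construction.
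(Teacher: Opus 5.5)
Your proof is correct and essentially matches the paper. The paper does not prove this proposition itself but cites \cite[Theorem~5.4]{MR4839135}, and it reuses that reference's cellwise affine construction $\bm w_h|_T = C_T(\bm x - \bm x_T)$, with jump terms controlled by an $h_T^{-2}$ scaling, in its proof of Lemma~\ref{th:inf-sup 3}. Your choice $C_T = \nabla_h \bm y_h^k(\bm x_T)\gamma_T$ turns $l_h$ into $2\|\gamma_h\|_{L^2(\Omega)}^2$ via $U_T^\tr U_T = I_2$, and your direct pointwise bound $|\bm w_h| \le h_T |\gamma_T|$ on $\partial T$ (replacing the trace and Poincar\'e inequalities) is sound.
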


Let $\bm y_h^k \in \mathbb A_h$ be fixed.
We define two continuous linear operators that will be instrumental to prove Theorem \ref{th:discrete solutions}.
Let $L_1: V_h \to \Upsilon_h$ and $L_2: M_h \to \Upsilon_h$
defined via the variational equations
\[ (L_1 (\bm w_h), \gamma_h ) = l_h(\nabla_h \bm y_h^k; \nabla_h \bm w_h, \gamma_h), \]
for all $\bm w_h \in V_h$ and $\gamma_h \in \Upsilon_h$, and 
\[ (L_2 (\mu_h), \zeta_h ) = l_h(\nabla_h \bm y_h^k; \mu_h, \zeta_h), \]
for all $\mu_h \in M_h$ and $\zeta_h \in \Upsilon_h$.
We need a final result before proving Theorem \ref{th:discrete solutions}.

\begin{lemma}
\label{th:inf-sup 3}
There exists $\beta' > 0$, such that $\beta'_h := \beta' h_{\min}$ satisfies
\[ \inf_{\mu_h \in \Ker(L_2)} \sup_{\bm w_h \in \Ker(L_1)} \frac{(\mu_h, \nabla_h \bm w_h)}{\|\mu_h \|_{L^2(\Omega)} \| \bm w_h \|_{H^2_h(\Omega)}} \ge \beta'_h.  \]
\end{lemma}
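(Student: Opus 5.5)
The plan is to build the test function explicitly. The key observation is that $V_h = \mathbb P^2(\mathcal T_h)^3$ is fully discontinuous, so for a given $\mu_h \in \Ker(L_2)$ we may choose $\bm w_h$ cell by cell with $\nabla_h \bm w_h = \mu_h$. Concretely, on each $T \in \mathcal T_h$ we set
\[
\bm w_h|_T(\bm x) := \mu_h|_T\,(\bm x - \bm x_T),
\]
which is affine, hence belongs to $\mathbb P^2(T)^3$. Then $\nabla_h \bm w_h = \mu_h$ is constant on each cell. Because $\mu_h \in \Ker(L_2)$, testing with $\zeta_h \in \Upsilon_h$ supported on a single cell gives
\[
(\nabla_h \bm y_h^k)^\tr \mu_h + \mu_h^\tr \nabla_h \bm y_h^k = 0 \quad \text{at each } \bm x_T,
\]
since the symmetric part is the only one seen by symmetric $\zeta_h$. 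Hence $l_h(\nabla_h \bm y_h^k; \nabla_h \bm w_h, \gamma_h) = l_h(\nabla_h \bm y_h^k;\mu_h,\gamma_h) = 0$ for all $\gamma_h$, that is, $\bm w_h \in \Ker(L_1)$. The numerator is then exactly $(\mu_h, \nabla_h \bm w_h) = \|\mu_h\|_{L^2(\Omega)}^2$.

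It remains to show $\|\bm w_h\|_{H^2_h(\Omega)} \lesssim h_{\min}^{-1}\|\mu_h\|_{L^2(\Omega)}$. We bound the three contributions separately.
\begin{itemize}
\item[(i)] The broken Hessian vanishes, since $\bm w_h$ is affine on each cell.
\item[(ii)] For the gradient jumps, on an edge $e$ shared by $T^\pm$ (or a boundary edge in $\Gamma_h^b$ with a single cell), we use $|[\nabla_h \bm w_h]| \le |\mu_h^-| + |\mu_h^+|$. The discrete trace inequality for constants, $\|\mu_h\|_{L^2(e)}^2 \lesssim h_T^{-1}\|\mu_h\|_{L^2(T)}^2$, then gives
\[
\|h^{-1/2}[\nabla_h \bm w_h]\|_{L^2(\Gamma_h)}^2 \lesssim \sum_T h^{-1} h_T^{-1}\|\mu_h\|_{L^2(T)}^2 .
\]
\item[(iii)] For the value jumps, $|\bm w_h|_T(\bm x)| \le h_T|\mu_h|_T|$ on $\partial T$. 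The same trace bound then gives
\[
\|h^{-3/2}[\bm w_h]\|_{L^2(\Gamma_h)}^2 \lesssim \sum_T h^{-3} h_T^{2} h_T^{-1}\|\mu_h\|_{L^2(T)}^2 .
\]
\end{itemize}
By quasi-uniformity, $h \simeq h_T \simeq h_{\min}$, so both nonzero terms are $\lesssim h_{\min}^{-2}\|\mu_h\|_{L^2(\Omega)}^2$. Each cell touches only a bounded number of edges by shape regularity, so the edge sums are controlled by the cell sums without loss.

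Combining these bounds,
\[
\frac{(\mu_h,\nabla_h \bm w_h)}{\|\mu_h\|_{L^2(\Omega)}\|\bm w_h\|_{H^2_h(\Omega)}} \ge \frac{\|\mu_h\|_{L^2(\Omega)}^2}{c\,h_{\min}^{-1}\|\mu_h\|_{L^2(\Omega)}^2} = \beta' h_{\min},
\]
with $\beta' = 1/c$ depending only on shape regularity, quasi-uniformity and the trace constant. The only delicate point is the bookkeeping of the mesh weights, in particular whether the $h$ in the penalty terms is global or local. Quasi-uniformity makes the distinction harmless up to constants. The case $\mu_h = 0$ is trivial, and no property of $\bm y_h^k$ beyond its use in defining the kernels is needed.
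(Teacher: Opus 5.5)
Your proposal is correct and follows essentially the same argument as the paper: the same cellwise affine test function $\bm w_h|_T = \mu_h|_T(\bm x - \bm x_T)$, which lies in $\Ker(L_1)$ because $\nabla_h \bm w_h = \mu_h$, followed by trace-inequality bounds on the jump terms giving $\|\bm w_h\|_{H^2_h(\Omega)} \lesssim h_{\min}^{-1}\|\mu_h\|_{L^2(\Omega)}$. The only cosmetic difference is that you bound $|\bm w_h| \le h_T|\mu_h|$ pointwise, whereas the paper uses a Poincar\'e inequality based on the vanishing cell averages of $\bm w_h$.
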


\begin{proof}
This proof follows parts of the proof of \cite[Theorem~5.4]{MR4839135}.
Let $\mu_h \in \Ker(L_2)$.
By definition of $L_2$, this entails that for all $T \in \mathcal T_h$,
\[ ((\nabla \bm y_h^k)^\tr \mu_h)(\bm x_T) + (\mu_h^\tr \nabla \bm y_h^k) (\bm x_T) = 0. \]
We define $\bm w_h \in V_h$ such that for all $T \in \mathcal T_h$, for all $x \in T$, $\bm w_h(x) := \mu_h(\bm x_T) (\bm x - \bm x_T)$.
One thus has 
\[ (\mu_h,\nabla_h \bm w_h) = \|\mu_h \|_{L^2(\Omega)} \|\nabla_h \bm w_h \|_{L^2(\Omega)}, \]
since we are in the equality case of the Cauchy--Schwarz inequality.
Also, for all $T \in \mathcal T_h$, one has
\[ 0 = ((\nabla \bm y_h^k)^\tr \mu_h)(\bm x_T) + (\mu_h^\tr \nabla \bm y_h^k) (\bm x_T) = ((\nabla \bm y_h^k)^\tr \nabla_h \bm w_h)(\bm x_T) + ((\nabla_h \bm w_h)^\tr \nabla \bm y_h^k) (\bm x_T), \]
and thus $\nabla_h \bm w_h \in \Ker(L_1)$.
Finally, since $\bm w_h$ is piecewise linear, one has
\[ \begin{aligned}
    \| \bm w_h \|_{H^2_h(\Omega)}^2 & = \sum_{e \in \mathcal E_h} \| h_e^{-\frac32} [\bm w_h] \|_{L^2(e)}^2 + \| h_e^{-\frac12} [\nabla_h \bm w_h] \|_{L^2(e)}^2
    \\
    & \lesssim 
    \sum_{T \in \mathcal T_h} h_T^{-4} \|\bm w_h \|_{L^2(T)}^2 + h_T^{-2} \|\nabla \bm w_h \|_{L^2(T)}^2
    \\
    & \lesssim 
    \sum_{T \in \mathcal T_h} h_T^{-2} \|\nabla \bm w_h \|_{L^2(T)}^2
    ,
\end{aligned} \]
where we have used a trace inequality and then a Poincar\'e inequality on each $T \in \mathcal T_h$ taking into account that $\bm w_h$ has a vanishing average over each $T \in \mathcal T_h$.
To summarize, there exists $\beta' > 0$, independent of $h > 0$, such that 
\[ \sup_{\bm w_h \in \Ker(L_1)} \frac{(\mu_h, \nabla_h \bm w_h)}{\| \bm w_h \|_{H^2_h(\Omega)}} \ge \beta' h_{\min} \|\mu_h \|_{L^2(\Omega)},  \]
which finishes the proof.
\end{proof}

\begin{remark}[$h$-uniform inf-sup constant]
Let $\mu_h \in M_h$.
Following, e.g., \cite[Chapter~5]{di2010discrete}, one can define the following discrete norm:
\[\| \mu_h \|_{\DG}^2 := \sum_{e \in \mathcal{E}_h} h_e^{-1}\|[\mu_h]\|_{L^2(e)}^2, \]
which is a discrete approximation of the $H^1(\Omega)$-norm on $M_h$.
Considering the associated dual norm, denoted $\| \cdot \|_{DG^\prime}$, it is possible to prove a uniform version of Lemma \ref{th:inf-sup 3}: there exists $\beta' > 0$, independent of $h$, such that
\[ \inf_{\mu_h \in \Ker(L_2)} \sup_{\bm w_h \in \Ker(L_1)} \frac{(\mu_h, \nabla_h \bm w_h)}{\|\mu_h \|_{\DG^\prime} \| \bm w_h \|_{H^2_h(\Omega)}} \ge \beta'.  \]
We avoid incorporating this norm into~Theorem~\ref{th:discrete solutions} as the inf-sup constant $\beta_h$ in Proposition~\ref{th:inf-sup 2} also depends on $h$.
Using $\| \cdot \|_{\DG^\prime}$ also complicates the proof and is unnecessary for establishing the bounds given in Corollary~\ref{cor:ImprovedBounds}, below.
\end{remark}

We are now in a position to prove Theorem \ref{th:discrete solutions}.

\begin{proof}[Proof of Theorem \ref{th:discrete solutions}]
Our goal is to apply \cite[Theorem~2.1]{caloz1997numerical}.
Let $X_h := V_h \times M_h \times \Upsilon_h$.
Let $z_h = (\bm w_h, v_h, \zeta_h) \in X_h$, we endow $X_h$ with the norm $\| z_h \|_{X_h} := \| \bm w_h \|_{H^2_h(\Omega)} + \| v_h \|_{L^2(\Omega)} + \| \zeta_h \|_{L^2(\Omega)}$.
Defined as such, $X_h$ is a finite-dimensional Banach space.
We also define $Y_h := V_h \times \Ker(L_2) \times \Upsilon_h \subset X_h$, which is also a Banach space when endowed with the norm of $X_h$.

Let $u_h = (\bm y_h, \mu_h, \gamma_h) \in Y_h$.
Let $A_h(u_h) : Y_h \to X_h'$ be the nonlinear operator defined,  for all $z_h = (\bm w_h, v_h, \zeta_h) \in X_h$, by
\begin{multline*}
\left<A_h(u_h), z_h \right>_{X_h',X_h} := a_h(\bm{y}_h,\bm w_h) + (\mu_h,\nabla_h \bm w_h) + l_h(\nabla_h \bm y_h^k;\nabla_h \bm w_h, \gamma_h) - (\bm f, \bm w_h) - F_h(\bm w_h)  \\
   + (\nabla_h \bm y_h, v_h) -  \sum_{T \in \mathcal T_h} (\Exp_{\nabla_h \bm y_h^k(\bm x_T)} (\tau \mu_h),v_h)_{L^2(T)}.
\end{multline*}

The exponential map~\eqref{eq:exponential map} is smooth by Lemma~\ref{th:Lipschitz}.
In turn, $A_h$ is a $\mathcal C^1$ mapping since the remaining terms in $A_h$ are linear.
We do not compute the exact expression of the Fr\'echet derivative $D \Exp_{\nabla_h \bm y_h^k(\bm x_T)}$ since it is not needed for the rest of this proof.
The primary result we need is
\begin{equation}
\label{eq:ExpAt0}
    D \Exp_{\nabla_h \bm y_h^k(\bm x_T)}(0)(\tilde{\mu}_h) = \tilde{\mu}_h, \quad \forall \tilde{\mu}_h \in M_h.
\end{equation}

Let $u_0 := (\bm y_h^k, 0, 0) \in Y_h$.
It is straightforward to show that
\[ \|A_h(u_0) \|_{X_h'} \le \|a_h\| \| \bm y_h^k \|_{H^2_h(\Omega)} + \| \bm f \|_{L^2(\Omega)} + C'(\| \bm y_D \|_{L^2(\Gamma_D)} + \| \bm G_D \|_{L^2(\Gamma_D)}) =: \varepsilon, \]
where $C' > 0$ is a constant independent of $h > 0$.
Note that $\varepsilon > 0$ is linearly dependent on $\| \bm y_h^k \|_{H^2_h(\Omega)}$.
We now seek to show that $DA_h(u_0)$ is bijective, which is equivalent to showing that the following linearized subproblem is well-posed: find $(\bm y_h, \mu_h, \gamma_h) \in Y_h$ such that
\begin{equation}
\label{eq:linear equation}
\begin{aligned}
  a_h(\bm{y}_h,\bm w_h) + (\mu_h,\nabla_h \bm w_h) + l_h(\nabla_h \bm y_h^k;\nabla_h \bm w_h, \gamma_h)
  &=
  f_1(\bm w_h),
  &&~~ \forall \bm w_h \in V_h, \\
  (\nabla_h \bm y_h, v_h) - \tau(\mu_h,v_h) 
  &= f_2(v_h),
  &&~~ \forall v_h \in M_h,\\
  l_h(\nabla_h \bm y_h^k;\mu_h, \zeta_h) &= 0, &&~~ \forall \zeta_h \in \Upsilon_h,
 \end{aligned}
\end{equation}
where $f_1: V_h \to \mathbb R$ and $f_2: M_h \to \mathbb R$ are continuous linear forms that represent a general right-hand side.
Indeed, the first two equations of~\eqref{eq:linear equation} arise from computing $DA_h(u_0)$ over $X_h$ and applying~\eqref{eq:ExpAt0}, while the condition $\mu_h \in \Ker(L_2)$ can be verified by testing the third equation with the element indicator function $\chi_T(x) := \begin{cases}
    1 & \text{ if } x \in T,\\
    0 & \text{ otherwise, }
\end{cases}$ in each component of $\zeta_h$.

We now seek to prove that \eqref{eq:linear equation} admits a unique solution.
Our goal is to use \cite[Theorem~3.1]{nicolaides1982existence} to get that result.
Propositions \ref{th:inf-sup 1} and \ref{th:inf-sup 2} gives us two of the three conditions required to apply \cite[Theorem~3.1]{nicolaides1982existence}.
The remaining condition consists of showing that there exists a unique solution to the subproblem of finding $(\bm y_h, \mu_h) \in \Ker(L_1) \times \Ker(L_2)$ such that
\begin{equation}
\label{eq:perturbed}
\begin{aligned}
a_h(\bm y_h, \bm w_h) + (\mu_h, \nabla_h \bm w_h) &= f_1(\bm w_h), &&~~ \forall \bm w_h \in \Ker(L_1), \\
(\nabla_h \bm y_h, v_h) - \tau (\mu_h, v_h) &= f_2(v_h), &&~~ \forall v_h \in \Ker(L_2).
\end{aligned}
\end{equation}

We aim to check the conditions of \cite[Theorem 4.3.2]{MR3097958} in order to show that there exists a unique solution to \eqref{eq:perturbed}.
Recalling \cite[Lemma~2.3]{bonito2021dg}, there exists $\kappa > 0$, independent of $h > 0$, such that
\[ a_h(\bm w_h, \bm w_h) \ge \kappa \| \bm w_h \|_{H^2_h(\Omega)}^2, \quad \forall \bm w_h \in V_h. \]
Thus $a_h$ is coercive over $\mathrm{Ker}(L_1) \subset V_h$.
Lemma \ref{th:inf-sup 3} is then the last missing piece of the puzzle, which gives the LBB condition in \cite[Theorem 4.3.2]{MR3097958}.
We can thus apply \cite[Theorem 4.3.2]{MR3097958} to \eqref{eq:perturbed} and get
\begin{equation}
\label{eq:bounds}
\begin{aligned}
\| \bm y_h \|_{H^2_h(\Omega)} & \le \frac{(\beta'_h)^2 + 4\tau \|a_h\| }{\kappa (\beta'_h)^2} \|f_1\|_{V_h'} + \frac{2 \|a_h \|^{\frac12}}{\sqrt{\kappa} \beta'_h} \|f_2 \|_{M_h'}, \\
\| \mu_h \|_{L^2(\Omega)} & \le \frac{2 \|a_h \|^{\frac12}}{\sqrt{\kappa} \beta_h'} \|f_1\|_{V_h'} + \frac{4\|a_h\|}{\tau \|a_h \| + 2 (\beta'_h)^2} \|f_2 \|_{M_h'}.
\end{aligned}
\end{equation}
Note that the bounds in \eqref{eq:bounds} do not blow up when $\tau \to 0$.
To finish, we apply \cite[Theorem~3.1]{nicolaides1982existence} and thus $DA_h(u_0):Y_h \to X_h'$ is bijective onto $X_h'$.
We also deduce from \cite[Theorem~3.1]{nicolaides1982existence}  that
\[ \| DA_h(u_0)^{-1} \|_{\mathcal L(X_h',Y_h)} \le C(\tau), \]
where $C(\tau)$ is a function which remains bounded when $\tau \to 0$.

We now return to our goal of applying \cite[Theorem~2.1]{caloz1997numerical}.
We consider $u_h \in Y_h$ such that $\| u_h - u_0 \|_{Y_h} \le \delta$ where $\delta > 0$.
Observe that
\[ \begin{aligned}
\|D A_h(u_h) - D A_h(u_0)\|_{\mathcal L(Y_h,X_h')} & \le \left\|\sum_{T \in \mathcal T_h} D \Exp_{\nabla_h \bm y_h^k(\bm x_T)}(\tau \mu_h) - D\Exp_{\nabla_h \bm y_h^k(\bm x_T)}(0) \right\|_{\mathcal L(Y_h,X_h')} \\
& \le K \tau \| \mu_h \|_{L^2(\Omega)} \le K \tau \delta,
\end{aligned} \]
where we have used Lemma \ref{th:Lipschitz} to get a Lipschitz constant $K>0$ independent of $k$ and $h$.
Note that the upper bound above becomes arbitrarily small as $\tau \to 0$.
Also note that having $u_h \in Y_h$ is essential to applying Lemma \ref{th:Lipschitz} since one needs $\mu_h(\bm x_T) \in T_{\nabla \bm y_h^k (\bm x_T)} St(3,2)$ for all $T \in \mathcal T_h$.

Having now checked the conditions of \cite[Theorem~2.1]{caloz1997numerical}, we deduce the existence of a unique solution $(\bm y_h^{k+1}, \mu_h^{k+1},  \gamma_h^{k+1}) \in Y_h$ to \eqref{eq:discrete problem} for all $\tau > 0$ small enough.

We finally test \eqref{eq:stay in Stiefel} with $\chi_T$ in each component of $v_h$ to arrive at
\[ \nabla_h \bm y_h^{k+1}(\bm x_T) = \Exp_{\nabla_h \bm y_h^k(\bm x_T)}(\tau \mu_h^{k+1}(\bm x_T)) \in St(3,2). \]
Testing \eqref{eq:stay tangent to Stiefel} with $\chi_T$ in each component of $\zeta_h$, one gets $\mu_h^{k+1}(\bm x_T) \in T_{\nabla_h \bm y_h^k(\bm x_T)} St(3,2)$.
Combining these last two results, we deduce that $\bm y_h^{k+1} \in \mathbb A_h$.
\end{proof}

\subsection{Discrete energy dissipation}
\label{sub:discrete_energy_dissipation}

The following energy dissipation property allows us to show that there is a $k$-uniform upper bound on the right-hand side of~\eqref{eq:stability}.

\begin{proposition}[Discrete energy dissipation]
\label{th:energy decrease}
Fix $\bm y_h^k \in \mathbb A_h$ and let $(\bm y_h^{k+1}, \mu_h^{k+1}, \gamma_h^{k+1}) \in \mathbb A_h \times M_h \times \Upsilon_h$ be a corresponding solution of~\eqref{eq:discrete problem}.
Assume $\bm y_h^k \neq \bm y_h^{k+1}$.
For $\tau > 0$ small enough, one has
\[ E_h(\bm y_h^{k+1}) < E_h(\bm y_h^k). \]
\end{proposition}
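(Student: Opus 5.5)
We test \eqref{eq:minimize} with $\bm w_h := \bm y_h^{k+1} - \bm y_h^k \in V_h$ and use the quadratic identity for the symmetric form $a_h$,
\[
E_h(\bm y_h^{k+1}) - E_h(\bm y_h^k) = a_h(\bm y_h^{k+1},\bm w_h) - (\bm f,\bm w_h) - F_h(\bm w_h) - \tfrac12 a_h(\bm w_h,\bm w_h).
\]
Substituting \eqref{eq:minimize} gives
\[
E_h(\bm y_h^{k+1}) - E_h(\bm y_h^k) = -(\mu_h^{k+1},\nabla_h\bm w_h) - l_h(\nabla_h\bm y_h^k;\nabla_h\bm w_h,\gamma_h^{k+1}) - \tfrac12 a_h(\bm w_h,\bm w_h).
\]
The last term is bounded by $-\tfrac{\kappa}{2}\|\bm w_h\|^2_{H^2_h(\Omega)}$ by \cite[Lemma~2.3]{bonito2021dg}. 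It is strictly negative because $\bm w_h\neq 0$ and $\|\cdot\|_{H^2_h(\Omega)}$ is a norm on $V_h$; the jump terms on $\Gamma^D$ remove the kernel.

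The two coupling terms only see $\nabla_h\bm w_h$ through its cell averages or its values at barycenters, since $\mu_h^{k+1},\gamma_h^{k+1}$ are piecewise constant. On each $T$, $\nabla_h\bm w_h$ is affine, so $\int_T\nabla_h\bm w_h = |T|\,\nabla_h\bm w_h(\bm x_T)$. Then \eqref{eq:stay in Stiefel} gives
\[
\nabla_h\bm w_h(\bm x_T) = \Exp_{U_T}(\tau\mu_T) - U_T, \qquad U_T := \nabla_h\bm y_h^k(\bm x_T),\quad \mu_T := \mu_h^{k+1}|_T \in T_{U_T}St(3,2).
\]
By \eqref{eq:Taylor expansion}, or more robustly by a Taylor formula with remainder controlled by Lemma~\ref{th:Lipschitz},
\[
\nabla_h\bm w_h(\bm x_T) = \tau\mu_T - \tfrac{\tau^2}{2}U_T\mu_T^\tr\mu_T + R_T, \qquad |R_T| \lesssim \tau^3|\mu_T|^3.
\]
For the first coupling term this yields
\[
-(\mu_h^{k+1},\nabla_h\bm w_h) = -\tau\|\mu_h^{k+1}\|^2_{L^2(\Omega)} + \sum_T |T|\Big(\tfrac{\tau^2}{2}\,\mu_T : U_T\mu_T^\tr\mu_T - \mu_T : R_T\Big).
\]
The tangency condition gives $U_T^\tr\mu_T$ skew, so $\mu_T : U_T\mu_T^\tr\mu_T = \operatorname{tr}(\mu_T^\tr\mu_T\,\mu_T^\tr U_T) = 0$, being the trace of a symmetric matrix times a skew one. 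Hence this term equals $-\tau\|\mu_h^{k+1}\|^2_{L^2(\Omega)} + O(\tau^3\|\mu\|^4_{L^4})$.

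For the multiplier term, testing with the cell indicator gives
\[
l_h(\nabla_h\bm y_h^k;\nabla_h\bm w_h,\gamma_h^{k+1}) = \sum_T |T|\,\gamma_T : \big(U_T^\tr D_T + D_T^\tr U_T\big), \qquad D_T := \Exp_{U_T}(\tau\mu_T) - U_T.
\]
Both $U_T$ and $U_T + D_T$ lie in $St(3,2)$, so $U_T^\tr D_T + D_T^\tr U_T = -D_T^\tr D_T$. This term is therefore quadratic in $D_T$ and bounded by $|\gamma_T|\,|D_T|^2 \lesssim |\gamma_T|\,\tau^2|\mu_T|^2$. This identity is the key structural point and is where exact feasibility at barycenters is used.

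Collecting terms,
\[
E_h(\bm y_h^{k+1}) - E_h(\bm y_h^k) \le -\tfrac{\kappa}{2}\|\bm w_h\|^2_{H^2_h} - \tau\|\mu\|^2 + C\tau^2\sum_T|T|\,|\gamma_T|\,|\mu_T|^2 + C\tau^3\sum_T|T|\,|\mu_T|^4.
\]
The main obstacle is absorbing the $\gamma$-term, since $\gamma_h^{k+1}$ is not small a priori. I would use the stability estimate \eqref{eq:stability} of Theorem~\ref{th:discrete solutions}, which bounds $\|\gamma_h^{k+1}\|$ and $\|\mu_h^{k+1}\|$ by $C(h)\|\bm y_h^k\|_{H^2_h}$. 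In finite dimensions, inverse inequalities give $\max_T|\gamma_T| \le C(h)\|\gamma_h^{k+1}\|_{L^2}$ and $\max_T|\mu_T| \le C(h)\|\mu_h^{k+1}\|_{L^2}$. The remainder is then at most $\tau^2 C(h,\bm y_h^k)\|\mu\|^2_{L^2}$, which is dominated by $\tau\|\mu\|^2$ for $\tau$ small.

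If $\mu_h^{k+1}=0$, then $\bm w_h=0$, contradicting the assumption, so $\mu_h^{k+1}\neq 0$. In any case the $-\tfrac{\kappa}{2}\|\bm w_h\|^2$ term already gives strict negativity. The conclusion is strict decrease, with the smallness of $\tau$ allowed to depend on $h$ and $\|\bm y_h^k\|_{H^2_h}$.
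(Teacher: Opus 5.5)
Your proof is correct and follows the paper's route: test \eqref{eq:minimize} with $\bm y_h^{k+1}-\bm y_h^k$, use coercivity of $a_h$, expand $\Exp$ at the barycenters, and show the multiplier term is $O(\tau^2)$; you get this last point from the exact feasibility identity $U_T^\tr D_T + D_T^\tr U_T = -D_T^\tr D_T$, whereas the paper uses a Taylor expansion plus the tangency equation \eqref{eq:stay tangent to Stiefel}. Your bookkeeping is tighter than the paper's, since you show the remainders scale like $\tau^2\|\mu_h^{k+1}\|_{L^2(\Omega)}^2$ and absorb them into $-\tau\|\mu_h^{k+1}\|_{L^2(\Omega)}^2$ via the stability bound of Theorem~\ref{th:discrete solutions} and inverse estimates, a step the paper leaves implicit; your aside that $\mu_h^{k+1}=0$ forces $\bm w_h=0$ is unjustified (equal barycentric gradients do not make two piecewise quadratics equal), but you correctly do not rely on it.
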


\begin{proof}
Let us define $\delta \bm y_h := \bm y_h^{k+1} - \bm y_h^k$.
One has
\[  \begin{aligned}
E_h(\bm y_h^{k+1}) - E_h(\bm y_h^k) &= \frac12a_h(\bm y_h^{k+1}, \delta \bm y_h) - (\bm f, \delta \bm y_h) + \frac12 a_h(\delta \bm y_h, \bm y_h^k), \\
& \le a_h(\bm y_h^{k+1}, \delta \bm y_h) - (\bm f, \delta \bm y_h),
\end{aligned} \]
since 
\[ a_h(\delta \bm y_h, \bm y_h^k) = a_h(\delta \bm y_h, \bm y_h^k) - a_h(\delta \bm y_h, \delta \bm y_h) \le a_h(\bm y_h^{k+1}, \bm y_h^k), \]
because $a_h$ is coercive.
Note that the inequality above is strict if $\bm y_h^k \neq \bm y_h^{k+1}$.
Using \eqref{eq:minimize}, one has
\[ E_h(\bm y_h^{k+1}) - E_h(\bm y_h^k) \le - (\mu_h^{k+1}, \nabla_h \delta \bm y_h) - l_h(\nabla_h \bm y_h^k; \nabla_h \delta \bm y_h, \gamma_h^{k+1}). \]
Recalling \eqref{eq:Taylor expansion}, there exists $C > 0$, independent of $T \in \mathcal T_h$, such that
one has
\begin{equation}
\label{eq:Taylor 2}
| \nabla_h \delta \bm y_h(\bm x_T)  - \tau \mu_h^{k+1}(\bm x_T) | \le C \tau^2, \quad \forall T \in \mathcal T_h.
\end{equation}
The constant $C$ above is independent from $T \in \mathcal T_h$ because in the second order term in \eqref{eq:Taylor expansion}, $U \in St(3,2)$ and $St(3,2)$ is a bounded set.
Therefore, $- (\mu_h^{k+1}, \nabla_h \delta \bm y_h) \le 0$, for $\tau > 0$ small enough.

Finally, using \eqref{eq:Taylor 2}, one has
\[ l_h(\nabla_h \bm y_h^k; \nabla_h \delta \bm y_h, \gamma_h^{k+1}) = \tau l_h(\nabla_h \bm y_h^k; \mu_h^{k+1}, \gamma_h^{k+1}) + O(\tau^2) = O(\tau^2), \]
because of \eqref{eq:stay tangent to Stiefel}, which finishes the proof.
\end{proof}

\begin{coro}[{k}-uniform bound]
\label{cor:ImprovedBounds}
    If $\|\bm y_h^{0}\|_{H^2_h(\Omega)}\lesssim 1$, then there exists a sequence of solutions $(\bm y_h^k, \mu_h^k)_k$ of \eqref{eq:discrete problem} such that
    \[
        \|\bm y_h^k\|_{H^2_h(\Omega)} \lesssim 1,
    \]
    for all sufficiently small $\tau > 0$.
\end{coro}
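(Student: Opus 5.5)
The plan is an induction on $k$ that combines the energy dissipation of Proposition~\ref{th:energy decrease} with the coercivity of the discrete energy $E_h$ in the $H^2_h$ norm. We define the sequence by repeatedly applying Theorem~\ref{th:discrete solutions}. At each step we invoke Proposition~\ref{th:energy decrease}, which gives $E_h(\bm y_h^{k+1}) \le E_h(\bm y_h^k)$; if $\bm y_h^{k+1}=\bm y_h^k$ this holds with equality. By induction,
\[ E_h(\bm y_h^k) \le E_h(\bm y_h^0) \quad \text{for all } k. \]

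\textbf{Step 1 (bound at $k=0$).} We bound $E_h(\bm y_h^0)$ from above by a constant. Continuity of $a_h$ and $F_h$ in the $H^2_h$ norm (constants uniform in $h$, as in \cite{bonito2021dg}), together with a discrete Poincar\'e/Friedrichs inequality $\|\bm y_h\|_{L^2}\lesssim\|\bm y_h\|_{H^2_h}$, give
\[ E_h(\bm y_h^0) \lesssim \|\bm y_h^0\|_{H^2_h}^2 + \|\bm y_h^0\|_{H^2_h} \lesssim 1. \]

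\textbf{Step 2 (lower bound on $E_h$).} We bound $E_h$ from below. Coercivity $a_h(\bm w_h,\bm w_h)\ge\kappa\|\bm w_h\|_{H^2_h}^2$ and Young's inequality on the linear terms give
\[ E_h(\bm y_h) \ge \frac{\kappa}{4}\|\bm y_h\|_{H^2_h}^2 - C_0, \]
where $C_0$ depends only on $\bm f$, $\bm y_D$, and $\bm G_D$. Combining with Step 1 yields
\[ \|\bm y_h^k\|_{H^2_h}^2 \le \frac{4}{\kappa}\bigl(E_h(\bm y_h^0)+C_0\bigr) \lesssim 1. \]
This is uniform in $k$ and $h$.

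\textbf{Step 3 (uniform admissible $\tau$).} The main obstacle is making "$\tau$ small enough" uniform in $k$. Both Theorem~\ref{th:discrete solutions} and Proposition~\ref{th:energy decrease} require a threshold on $\tau$ that may depend on $\|\bm y_h^k\|_{H^2_h}$ and on $\|\mu_h^{k+1}\|$, $\|\gamma_h^{k+1}\|$. To handle this, we fix a radius $R$ equal to the bound from Step 2, noting that $R$ depends only on the initial data. In Theorem~\ref{th:discrete solutions}, $\varepsilon$ depends linearly on $\|\bm y_h^k\|_{H^2_h}\le R$, and $\|DA_h(u_0)^{-1}\|$ is bounded independently of $\tau$, so a single $\tau_0(R,h)$ works for every iterate satisfying the bound. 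The stability estimate then bounds $\|\mu_h^{k+1}\|$ and $\|\gamma_h^{k+1}\|$ by $CR$. In the proof of Proposition~\ref{th:energy decrease}, the $O(\tau^2)$ terms have constants depending only on these norms and on the boundedness of $St(3,2)$, so they are controlled by $R$ as well. The remaining point is the step where $-(\mu_h^{k+1},\nabla_h\delta\bm y_h)\le0$: the leading term is $-\tau\|\mu_h^{k+1}\|^2$, and it must dominate the $O(\tau^2)$ remainders. We would handle this by noting that the remainder scales like $\tau^2|\mu_h^{k+1}|^2$ per cell, by \eqref{eq:Taylor expansion} with $W=\mu_h^{k+1}(\bm x_T)$. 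Hence the leading term dominates for $\tau\le\tau_1(R)$, independently of $k$.

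\textbf{Step 4 (closing the induction).} We take $\tau<\min(\tau_0,\tau_1)$. Suppose $\|\bm y_h^j\|_{H^2_h}\le R$ for all $j\le k$, which holds at $k=0$ after enlarging $R$ if needed. Then $\bm y_h^{k+1}$ exists, energy decreases, and Step 2 gives $\|\bm y_h^{k+1}\|_{H^2_h}\le R$. This proves the corollary.
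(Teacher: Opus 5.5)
Your proposal is correct and follows the paper's argument: you alternate Theorem~\ref{th:discrete solutions} and Proposition~\ref{th:energy decrease} to get $E_h(\bm y_h^k)\le E_h(\bm y_h^0)$, then combine coercivity of $a_h$ with Young's inequality to absorb the linear terms. Your Step~3 fixes the radius $R$ a priori, so that the smallness thresholds on $\tau$ (including the cellwise domination of the Taylor remainders of size $O(\tau^2|\mu_h^{k+1}|^2)$ by $\tau\|\mu_h^{k+1}\|^2$) are uniform in $k$; this makes explicit a point the paper's short proof leaves implicit, though note that these thresholds also depend on $h$ through the stability constant and inverse estimates.
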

\begin{proof}
We apply alternatively Theorem \ref{th:discrete solutions} and Proposition \ref{th:energy decrease} in order to establish the existence of successive iterates of decreasing energy.
Recalling \cite[Lemma~2.3]{bonito2021dg}, there exists $\kappa > 0$, independent of $h > 0$, such that
\[ a_h(\bm w_h, w_h) \ge \kappa \| \bm w_h \|_{H^2_h(\Omega)}^2. \]
Therefore, for each $k \in \mathbb N$, using Proposition \ref{th:energy decrease}, one has
    \[
        \frac{\kappa}{2} \|\bm y_h^k\|_{H^2_h(\Omega)}^2
        \leq
        a_h(y_h^k,y_h^k)
        \leq
        E_h(\bm y_h^{0})
        +
        \|f\|_{L^2(\Omega)}\|\bm y_h^{k}\|_{L^2(\Omega)}
        +
        \|F_h\|_{V_h^\prime}\|\bm y_h^k\|_{H^2_h(\Omega)}
        .
    \]
By applying Young's inequality and incorporating the $\| \bm y_h^k \|_{H^2_h(\Omega)}^2$ in the right-hand side into the left-hand side, one sees that $\| \bm y_h^k \|_{H^2_h(\Omega)} \lesssim 1$, as necessary.
\end{proof}

\section{Convergence}
\label{sec:convergence}
Our goal is to show that sequences of solutions of the discrete problem converge to solutions of \eqref{eq:min problem}.
To do so, our tools are compactness and $\Gamma$-convergence.
Before stating the $\Gamma$-convergence result in Section~\ref{sub:discrete_hessian}, we introduce the concept of discrete Hessian, which is central to the proof.

\subsection{Compactness}
This result is very close to \cite[Proposition~5.1]{bonito2021dg} and \cite[Theorem~9(i)]{MR4839135}.

\begin{proposition}[Compactness]
\label{th:compactness}
Let $C > 0$ and $(\bm y_h)_{ h > 0} \in \mathbb A_h$ be a sequence such that for all $h > 0$, $E_h(\bm y_h) \le C$.
There exists $\bm y \in \mathbb A$ such that, up to a subsequence (not relabeled),
\[ \begin{aligned}
&\bm y_h \to \bm y \text{ strongly in } L^2(\Omega)^3, \\
& \nabla_h \bm y_h \to \nabla \bm y \text{ strongly in } L^2(\Omega)^{3 \times 2}, \\
\end{aligned} \]
\end{proposition}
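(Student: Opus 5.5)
First, I obtain a uniform bound from $E_h(\bm y_h) \le C$, exactly as in the proof of Corollary~\ref{cor:ImprovedBounds}. By \cite[Lemma~2.3]{bonito2021dg}, $a_h(\bm y_h,\bm y_h)\ge \kappa\|\bm y_h\|_{H^2_h(\Omega)}^2$ with $\kappa$ independent of $h$. The linear terms satisfy
\[
|(\bm f,\bm y_h)| + |F_h(\bm y_h)| \lesssim \|\bm y_h\|_{H^2_h(\Omega)},
\]
provided we have a discrete Poincaré--Friedrichs inequality $\|\bm y_h\|_{L^2(\Omega)} \lesssim \|\bm y_h\|_{H^2_h(\Omega)}$. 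This inequality holds because the Dirichlet jumps on $\Gamma^D$ are included in the norm, and $\|F_h\|_{V_h'}\lesssim 1$ by trace/inverse estimates. Young's inequality then gives $\|\bm y_h\|_{H^2_h(\Omega)}\lesssim 1$ uniformly in $h$.

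Second, I extract limits using the discrete compactness results for broken $H^2$ spaces, as in \cite[Proposition~5.1]{bonito2021dg}. Applying the broken $H^1$ compactness theorem (e.g.\ \cite[Chapter~5]{di2010discrete}) to $\bm y_h$ and to each component of $\nabla_h\bm y_h$ yields, up to a subsequence, $\bm y_h\to\bm y$ in $L^2$ and $\nabla_h\bm y_h\to \bm G$ in $L^2$. This uses that the broken $H^1$ seminorms, including the $h^{-1/2}$ jump terms, of $\bm y_h$ and $\nabla_h \bm y_h$ are controlled by the $H^2_h$ norm. The limit $\bm G$ lies in $H^1$. Moreover, $\bm G=\nabla\bm y$, obtained by integrating by parts against smooth compactly supported test functions, since the jump contributions $\int_{\Gamma_h^i}[\bm y_h]\cdot\ldots$ vanish as $h^{3/2}\to0$. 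Hence $\bm y\in H^2(\Omega)^3$. The boundary conditions $\bm y=\bm y_D$ and $(\nabla\bm y)\bm n=\bm G_D$ on $\Gamma^D$ follow from the boundary penalty terms. These terms are bounded in the $H^2_h$ norm after subtracting the data, so the boundary jumps $\bm y_h-\bm y_D$ and $(\nabla_h\bm y_h)\bm n-\bm G_D$ tend to zero in $L^2(\Gamma^D)$. Combined with continuity of traces under the discrete compactness, this gives $\bm y\in V_D$.

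The remaining step, which I expect to be the main obstacle, is showing $\nabla\bm y\in St(3,2)$ a.e., since $\bm y_h\in\mathbb A_h$ only enforces the constraint at the barycenters $\bm x_T$. I would introduce the piecewise constant $\Pi_0\nabla_h\bm y_h$ defined by $(\Pi_0\nabla_h\bm y_h)|_T:=\nabla_h\bm y_h(\bm x_T)$; for affine-per-cell $\nabla_h \bm y_h$ this equals the cell average. Then
\[
\|\nabla_h\bm y_h-\Pi_0\nabla_h\bm y_h\|_{L^2(\Omega)}\lesssim h\|\nabla_h^2\bm y_h\|_{L^2(\Omega)}\lesssim h\to0,
\]
so $\Pi_0\nabla_h\bm y_h\to\nabla\bm y$ in $L^2$. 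Passing to a further subsequence, the convergence holds a.e. Since $\Pi_0\nabla_h\bm y_h(\bm x)\in St(3,2)$ for every $\bm x$ and $St(3,2)$ is closed, we get $\nabla\bm y\in St(3,2)$ a.e. Therefore $\bm y\in\mathbb A$.
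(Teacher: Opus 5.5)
Your handling of the isometry constraint is correct and differs only mildly from the paper's. The paper shows $\|\nabla_h \bm y_h^\tr \nabla_h \bm y_h - I\|_{L^1(\Omega)} \lesssim h$ and concludes via $L^1$ convergence of $\nabla_h \bm y_h^\tr \nabla_h \bm y_h$ and the triangle inequality. You instead use the $St(3,2)$-valued piecewise constant $\Pi_0 \nabla_h \bm y_h$, an a.e.\ convergent subsequence, and closedness of $St(3,2)$. Both rest on the same cellwise Poincar\'e estimate, and yours avoids the product bound.

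The genuine gap is in your first step, which the paper does not carry out but imports from \cite[Proposition~5.1]{bonito2021dg}. The claim $\|F_h\|_{V_h'} \lesssim 1$ is false. The boundary terms of $F_h$ only give $\|F_h\|_{V_h'} \lesssim h^{-3/2}\|\bm y_D\|_{L^2(\Gamma^D)} + h^{-1/2}\|\bm G_D\|_{L^2(\Gamma^D)}$, and this is sharp when $\bm y_D \neq 0$ (test with the interpolant of a fixed smooth extension of $\bm y_D$). Young's inequality therefore yields only $\|\bm y_h\|_{H^2_h(\Omega)}^2 \lesssim 1 + h^{-3}$. Worse, the bound you aim for cannot hold at all. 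Since $[\bm w_h] = \bm w_h$ on boundary edges, $\|\bm y_h\|_{H^2_h(\Omega)} \lesssim 1$ gives $\|\bm y_h\|_{L^2(\Gamma^D)} \lesssim h^{3/2}$, which forces $\bm y = 0$ on $\Gamma^D$. That contradicts the condition $\bm y = \bm y_D$ you derive in the second step. There you tacitly switch to a bound on the shifted jumps $\bm y_h - \bm y_D$ and $(\nabla_h \bm y_h)\bm n - \bm G_D$, which the first step never produced.

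The right bookkeeping is to complete the square: $E_h(\bm y_h) = \tilde E_h(\bm y_h) - c_h$. Here $\tilde E_h$ is the same energy with those shifted jumps on $\Gamma^D$ (the form used in \cite{bonito2021dg}), and $c_h := \frac{\eta_0}{2}\|h^{-3/2}\bm y_D\|_{L^2(\Gamma^D)}^2 + \frac{\eta_1}{2}\|h^{-1/2}\bm G_D\|_{L^2(\Gamma^D)}^2$. Use coercivity of $\tilde E_h$ for large $\eta_0, \eta_1$, together with a discrete Friedrichs inequality whose boundary term is unweighted (note $\|\bm y_h\|_{L^2(\Gamma^D)} \le \|\bm y_D\|_{L^2(\Gamma^D)} + h^{3/2}\|h^{-3/2}(\bm y_h - \bm y_D)\|_{L^2(\Gamma^D)}$). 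This bounds $\|\nabla_h^2 \bm y_h\|_{L^2(\Omega)}$, the interior jumps, and the shifted boundary jumps uniformly in $h$, which is exactly what your second and third steps need.

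This repair, however, requires $\tilde E_h(\bm y_h) \le C$. In other words, the hypothesis must be read in the sense of \cite{bonito2021dg}, as the paper's deferral implicitly does. With $E_h$ exactly as defined in the paper, $E_h(\bm y_h) \le C$ only gives $\tilde E_h(\bm y_h) \le C + c_h = C + O(h^{-3})$, and no compactness can be extracted from that. For example, on the square test case, take the cellwise quadratic Taylor polynomials of the cylinders $(r\sin(x_1/r), x_2, r(1-\cos(x_1/r)))$ with $r = h^{1/4}$. They lie in $\mathbb A_h$ and satisfy $E_h(\bm y_h) \to -\infty$, yet their broken gradients have no strongly convergent subsequence.
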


\begin{proof}
The proof follows the same first three steps as the proof of \cite[Proposition~5.1]{bonito2021dg} which are not repeated for concision.
From these three steps, one deduces that there exists $\bm y \in V_D$ such that $\bm y_h \to \bm y$ strongly in $L^2(\Omega)$ and $\nabla_h \bm y_h \to \nabla \bm y$ strongly in $L^2(\Omega)$.
The only thing that remains to be proved is that $\bm y \in \mathbb A$ i.e.\ $\nabla \bm y \in St(3,2)$ a.e.\ in $\Omega$.

On the one hand, one has
\[ \begin{aligned}
\| \nabla_h \bm y_h^\tr \nabla_h \bm y_h - \nabla \bm y^\tr \nabla \bm y \|_{L^1(\Omega)} &= \| \nabla_h \bm y_h^\tr (\nabla_h \bm y_h - \nabla \bm y) + (\nabla_h \bm y_h - \nabla \bm y)^\tr \nabla \bm y \|_{L^1(\Omega)}, \\
& \le \left( \| \nabla_h \bm y_h \|_{L^2(\Omega)} + \| \nabla \bm y \|_{L^2(\Omega)} \right) \|\nabla_h \bm y_h - \nabla \bm y\|_{L^2(\Omega)} \to 0,
\end{aligned} \]
since $(\bm y_h)_{h > 0}$ and $(\nabla_h \bm y_h)_{h > 0}$ are convergent and thus bounded in $L^2(\Omega)$ and $\nabla_h \bm y_h \to \nabla \bm y$ strongly in $L^2(\Omega)$.
On the other hand, since $\nabla_h \bm y_h(\bm x_T) \in St(3,2)$, one also has
\[ \begin{aligned}
\| \nabla_h \bm y_h^\tr \nabla_h \bm y_h - I \|_{L^1(T)} &= \| \nabla_h \bm y_h^\tr \nabla_h \bm y_h - \nabla_h \bm y_h(\bm x_T)^\tr \nabla_h \bm y_h (\bm x_T) \|_{L^1(T)}, \\
& \le \left( \| \nabla_h \bm y_h \|_{L^2(T)} + \| \nabla_h \bm y_h (\bm x_T) \|_{L^2(T)} \right) \|\nabla_h \bm y_h - \nabla_h \bm y_h(\bm x_T)\|_{L^2(T)},
\end{aligned} \]
for each $T \in \mathcal T_h$.
Using Jensen's inequality, we conclude that
\[ \| \nabla_h \bm y_h (\bm x_T) \|_{L^2(T)} \le \| \nabla_h \bm y_h \|_{L^2(T)}. \]
Next, note that $\nabla_h \bm y_h(\bm x_T) = \frac{1}{|T|}\int_T \nabla_h \bm y_h \dd x$ since $\nabla_h \bm y_h$ is a $\mathbb P^1$ function on $T$.
Thus, Poincar\'e's inequality implies that
\[ \| \nabla_h \bm y_h -\nabla_h \bm y_h(\bm x_T)\|_{L^2(T)} \le h_T \| \nabla^2_h \bm y_h \|_{L^2(T)}. \]
In turn, we conclude that
\[ \| \nabla_h \bm y_h^\tr \nabla_h \bm y_h - I \|_{L^1(T)} \lesssim h_T \| \nabla^2_h \bm y_h \|_{L^2(T)} \| \nabla_h \bm y_h \|_{L^2(T)}. \]
Summing over all $T \in \mathcal T_h$ and using that $\| \nabla^2_h \bm y_h \|_{L^2(\Omega)} \lesssim 1$ and $\| \nabla_h \bm y_h \|_{L^2(\Omega)} \lesssim 1$, one finds that
\[ \| \nabla_h \bm y_h^\tr \nabla_h \bm y_h - I \|_{L^1(\Omega)} \lesssim h \to 0. \]
Finally, the triangle inequality implies that $\| \nabla \bm y^\tr \nabla \bm y - I \|_{L^1(\Omega)} = 0$, which completes the proof.  
\end{proof}

\subsection{Discrete Hessian}
\label{sub:discrete_hessian}

For $\bm y_h \in V_h$, the broken Hessian $\nabla_h^2 \bm y_h$ does not converge to $\nabla^2 \bm y$ when $\bm y_h \to \bm y$ in $L^2(\Omega)^3$.  
Instead, the discrete Hessian $H_h(\bm y_h)$ \cite{pryer2012discontinuous} restores convergence by incorporating jumps of both the function and its gradient \cite{di2010discrete}.  
We briefly outline the construction of $H_h$ and its key properties.  

For each $e \in \mathcal{E}_h$, we can define a local lifting operator $r_e:L^2(e)^{3\times 2} \to \mathbb P^1(\mathcal T_h)^{3\times 2\times 2}$ via the variational equation
\begin{equation}\label{e:re}
\int_\Omega r_e(v) : \tau_h \dd x = \int_e  \{\tau_h\}_e \bm n_e : v \dd S, \quad \forall \tau_h \in \mathbb P^1(\mathcal T_h)^{3\times 2\times 2}. 
\end{equation}
The support of $r_e$ is denoted by $\omega(e)$ and consists of two elements of $\mathcal T_h$ when $e \in \mathcal{E}_h^i$ and one element of $\mathcal T_h$ when $e$ is a boundary edge.
The global lifting operator $R_h: L^2(\Gamma_h^i \cup \Gamma^D)^{3\times 2} \to \mathbb P^1(\mathcal T_h)^{3\times 2\times 2}$ is defined for $v \in L^2(\Gamma_h^i \cup \Gamma^D)^{3\times 2}$ by
\[ R_h(v):= \sum_{e \in \mathcal{E}_h^i \cup \mathcal E_h^D} r_e(v).  \]
We define the discrete Hessian of $\bm w_h \in V_h$ as follows:
\[ H_h(\bm w_h) := \nabla^2_h \bm w_h - R_h([\nabla_h \bm w_h]), \]
where $[\nabla_h \bm w_h] \in L^2(\Gamma_h)^{3 \times 2}$ denotes the jumps of the broken gradient defined on the skeleton of the mesh $\Gamma_h$.

The following lemmata are restated from \cite[Proposition~4.3]{bonito2021dg} and \cite[Proposition~4.5]{bonito2021dg}.
\begin{lemma}
\label{th:weak hessian}
Let  $\bm y \in V$ and $(\bm y_h)_{h > 0} \in V_h$ such that $\bm y_h \to \bm y$ strongly in $L^2(\Omega)$.
Up to a subsequence (not relabeled), one has
\[ H_h(\bm y_h) \rightharpoonup \nabla^2 \bm y \text{ weakly  in } L^2(\Omega). \]
\end{lemma}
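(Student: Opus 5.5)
The proof goes in three steps: a uniform $L^2$ bound on $H_h(\bm y_h)$, weak compactness, and identification of the limit by testing against smooth functions.

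\textbf{Step 1: uniform bound.} I would first establish
\[
\|H_h(\bm y_h)\|_{L^2(\Omega)} \lesssim \|\bm y_h\|_{H^2_h(\Omega)}.
\]
For each edge, take $\tau_h = r_e(v)$ in \eqref{e:re} and apply an inverse trace inequality on the (at most two) cells of $\omega(e)$. This gives $\|r_e(v)\|_{L^2(\Omega)} \lesssim h_e^{-1/2}\|v\|_{L^2(e)}$. Because each cell meets only finitely many edges, summing yields
\[
\|R_h([\nabla_h \bm y_h])\|_{L^2(\Omega)} \lesssim \|h^{-1/2}[\nabla_h \bm y_h]\|_{L^2(\Gamma_h)}.
\]
A uniform bound on $\|\bm y_h\|_{H^2_h(\Omega)}$ is implicit in the intended setting, since the lemma is applied to sequences of bounded energy (cf.\ Proposition~\ref{th:compactness} and the coercivity of $a_h$ from \cite[Lemma~2.3]{bonito2021dg}). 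I will assume it, as the restatement from \cite[Proposition~4.3]{bonito2021dg} does.

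\textbf{Step 2: compactness.} By weak compactness in $L^2$, a subsequence satisfies $H_h(\bm y_h) \rightharpoonup H$ for some $H \in L^2(\Omega)^{3\times2\times2}$.

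\textbf{Step 3: identifying $H$.} This is the main step: show $H = \nabla^2 \bm y$ by testing against $\phi \in C_c^\infty(\Omega)^{3\times2\times2}$. Let $\phi_h$ be the elementwise $L^2$ projection of $\phi$ onto $\mathbb P^1(\mathcal T_h)$. Since $R_h(\cdot)$ lies in $\mathbb P^1(\mathcal T_h)$, \eqref{e:re} gives
\[
\int_\Omega R_h([\nabla_h \bm y_h]) : \phi \dd x = \int_\Omega R_h([\nabla_h \bm y_h]) : \phi_h \dd x = \int_{\Gamma_h} [\nabla_h \bm y_h] : \{\phi_h\}\bm n \dd S.
\]
Next, integrate $\int_\Omega \nabla_h^2 \bm y_h : \phi$ by parts twice, elementwise. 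The first integration produces the edge term $\int_{\Gamma_h} [\nabla_h \bm y_h] : \{\phi\}\bm n$; the average/jump decomposition works because $\phi$ is continuous and compactly supported. This edge term cancels the lifting contribution up to
\[
\int_{\Gamma_h} [\nabla_h \bm y_h] : \{\phi - \phi_h\}\bm n \dd S,
\]
which is bounded by $\|h^{-1/2}[\nabla_h \bm y_h]\|_{L^2(\Gamma_h)}\, \|h^{1/2}\{\phi-\phi_h\}\|_{L^2(\Gamma_h)} \lesssim h^2 \|\phi\|_{H^2(\Omega)} \to 0$, using a trace inequality and standard approximation estimates.

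The second integration by parts produces a jump term $\int_{\Gamma_h} [\bm y_h]\cdot \{\operatorname{div}\phi\}\bm n$. It is controlled by $h^{3/2}\|h^{-3/2}[\bm y_h]\|_{L^2(\Gamma_h)}$ times a trace bound of order $h^{-1/2}$, so it is $O(h)$ and vanishes. What remains is $\int_\Omega \bm y_h \cdot \operatorname{div}\operatorname{div}\phi \dd x$, which converges to $\int_\Omega \bm y \cdot \operatorname{div}\operatorname{div}\phi \dd x = \int_\Omega \nabla^2\bm y : \phi \dd x$ by the strong $L^2$ convergence $\bm y_h \to \bm y$.

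\textbf{Conclusion.} Hence $(H - \nabla^2 \bm y, \phi) = 0$ for every test function $\phi$, so $H = \nabla^2\bm y$. I expect the main obstacle to be the careful bookkeeping of the edge terms in Step 3, in particular matching the DG jump and average conventions with the lifting identity \eqref{e:re}. Extending to test functions that are nonzero on $\Gamma^D$, as the boundary liftings would require, is unnecessary here because compact support suffices to identify the weak limit.
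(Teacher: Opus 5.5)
Your argument is correct once the uniform bound $\|\bm y_h\|_{H^2_h(\Omega)}\lesssim 1$ that you add is assumed. The paper's restatement drops this hypothesis, but some such bound is indispensable, since $\bm y_h \to \bm y$ in $L^2(\Omega)$ alone does not keep $H_h(\bm y_h)$ bounded. Beyond that, your argument is essentially the proof of \cite[Proposition~4.3]{bonito2021dg} that the paper cites: stability of the liftings, weak compactness, and identification of the limit against compactly supported test functions via \eqref{e:re} and elementwise integration by parts. The one deviation is that the discrete Hessian in \cite{bonito2021dg} also contains a lifting $B_h([\bm y_h])$ of the function jumps, which absorbs the $[\bm y_h]$ edge term from the second integration by parts; the paper's $H_h$ omits that lifting, so your direct $O(h)$ estimate of this term via the $h^{-3}$-weighted jump bound is exactly what is needed here.
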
	

\begin{lemma}
\label{th:lower bound}
Let $\bm y_h \in V_h$.
For sufficiently large penalty parameters $\eta_0,\eta_1 > 0$, one has
\[ \frac12 \| H_h(\bm y_h) \|^2_{L^2(\Omega)} - (\bm f, \bm y_h) \le E_h(\bm y_h). \]
\end{lemma}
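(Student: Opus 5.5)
The plan is to expand $\|H_h(\bm y_h)\|^2_{L^2(\Omega)}$ and compare it term by term with $a_h(\bm y_h,\bm y_h)$, following the argument of \cite[Proposition~4.5]{bonito2021dg}. By definition of $H_h$,
\[
\|H_h(\bm y_h)\|^2_{L^2(\Omega)} = \|\nabla_h^2 \bm y_h\|^2_{L^2(\Omega)} - 2\big(\nabla^2_h \bm y_h, R_h([\nabla_h \bm y_h])\big) + \|R_h([\nabla_h \bm y_h])\|^2_{L^2(\Omega)} .
\]
Since $\bm y_h \in \mathbb P^2(\mathcal T_h)^3$, its broken Hessian is piecewise constant, so it lies in $\mathbb P^1(\mathcal T_h)^{3\times2\times2}$. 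It is therefore an admissible test function in \eqref{e:re}. Summing over $e$ gives
\[
\big(\nabla^2_h \bm y_h, R_h([\nabla_h \bm y_h])\big) = \int_{\Gamma_h}[\nabla_h \bm y_h] : \{\nabla^2_h \bm y_h\}\bm n \dd S .
\]
Twice this quantity is exactly the pair of symmetric consistency terms in $a_h(\bm y_h,\bm y_h)$. Hence the first two terms of the expansion reproduce the volume and consistency parts of $a_h$.

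The remaining step is to bound the lifting term by the gradient-jump penalty. I will prove the standard lifting stability estimate $\|r_e(v)\|_{L^2(\omega(e))} \le C_R\, h^{-1/2}\|v\|_{L^2(e)}$. To do so, test \eqref{e:re} with $\tau_h = r_e(v)$ and apply a discrete trace inequality $\|\{\tau_h\}\|_{L^2(e)} \lesssim h^{-1/2}\|\tau_h\|_{L^2(\omega(e))}$, which holds by shape regularity and quasi-uniformity. Each element touches at most three edges, so the supports $\omega(e)$ have finite overlap, and summing gives
\[
\|R_h([\nabla_h \bm y_h])\|^2_{L^2(\Omega)} \le 3C_R^2 \|h^{-1/2}[\nabla_h \bm y_h]\|^2_{L^2(\Gamma_h)} .
\]
Choosing $\eta_1 \ge 3C_R^2$ then yields $\|H_h(\bm y_h)\|^2 \le a_h(\bm y_h,\bm y_h)$, because the $\eta_0$ jump term is nonnegative and can be dropped.

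The main obstacle is the Dirichlet data in $F_h$. As written, $E_h$ contains $-F_h(\bm y_h)$, which is linear in $\bm y_h$ and cannot be controlled by the quadratic bound alone. I would handle it as in \cite{bonito2021dg}. On boundary edges, replace the jumps by the data-shifted jumps $(\nabla_h\bm y_h)\bm n - \bm G_D$ and $\bm y_h - \bm y_D$, and define $H_h$ through the lifting of the shifted gradient jump. With these shifted jumps, $\frac12 a_h(\bm y_h,\bm y_h) - F_h(\bm y_h)$ becomes the completed-square form, up to a constant depending only on the data $\bm y_D$ and $\bm G_D$. That constant can be absorbed into the energy normalization. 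The computation above then goes through verbatim with shifted jumps, giving the claimed inequality after subtracting $(\bm f,\bm y_h)$ from both sides.
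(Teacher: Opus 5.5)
The paper gives no proof of its own here; it only restates \cite[Proposition~4.5]{bonito2021dg}. Your route is the standard lifting argument behind that result, and it is correct. You expand $\|H_h(\bm y_h)\|^2$ and test \eqref{e:re} with the piecewise constant $\nabla_h^2\bm y_h$, which turns the cross term into the consistency terms of $a_h$. You then bound $\|R_h([\nabla_h\bm y_h])\|^2$ by the gradient-jump penalty through a discrete trace inequality and finite overlap. For homogeneous data ($F_h\equiv 0$) this proves the lemma verbatim, and it needs only $\eta_1\ge 3C_R^2$, with any $\eta_0>0$.

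Your diagnosis of the Dirichlet terms is right, and it is stronger than you state: with the paper's definitions the inequality cannot hold for nonzero data.
\begin{itemize}
\item With unshifted boundary jumps, replace $\bm y_h$ by $t\bm y_h$ and let $t\to 0^\pm$. This forces $F_h\equiv 0$.
\item With shifted jumps, take $\bm y_h=(x_1,x_2,0)$ in the square benchmark. Then $H_h(\bm y_h)=0$ but $E_h(\bm y_h)=-(\bm f,\bm y_h)-c_h$ with $c_h>0$.
\end{itemize}

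Where you need more care is the nature of the constant. Completing the square gives
\[ \tfrac12 a_h(\bm y_h,\bm y_h)-F_h(\bm y_h)=Q_h(\bm y_h)-c_h,\qquad c_h:=\tfrac{\eta_1}{2}\|h^{-1/2}\bm G_D\|^2_{L^2(\Gamma_h^b)}+\tfrac{\eta_0}{2}\|h^{-3/2}\bm y_D\|^2_{L^2(\Gamma_h^b)}. \]
Here $Q_h$ is the form obtained from $\frac12 a_h(\bm y_h,\bm y_h)$ by replacing the boundary jumps with $(\nabla_h\bm y_h)\bm n-\bm G_D$ and $\bm y_h-\bm y_D$. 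So what you actually prove is $\frac12\|H_h(\bm y_h)\|^2-(\bm f,\bm y_h)\le E_h(\bm y_h)+c_h$.

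This $c_h$ is not a data-only constant. It depends on $h$ and on the penalties, and it grows like $\eta_0h^{-3}$. Absorbing it is therefore not a harmless normalization but a change of the statement, to the completed-square energy of \cite{bonito2021dg}. It does not change which discrete minimizers are selected at fixed $h$. You should still write the modified inequality explicitly, because the later uses are only meaningful for the renormalized energy. Those uses are the uniform bound $E_h(\bm y_h)\le C$ in the compactness result and the limit $E_h(\bm y_h)\to E(\bm y)$.
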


\subsection{$\Gamma$-convergence}

\begin{proposition}[$\liminf$]
\label{th:liminf}
Let $C>0$ and $(\bm y_h)_{h>0} \in \mathbb{A}_h$ such that $E_h(\bm y_h) \le C$, and $\bm y \in \mathbb A$.
We assume that, as $h \to 0$,
\[ \left\{ \begin{aligned}
& \bm y_h \to \bm y \text{ strongly in } L^2(\Omega)^3, \\
& \nabla_h \bm y_h \to \nabla \bm y \text{ strongly in } L^2(\Omega)^{3 \times 2}. \\
\end{aligned} \right.\]
Then, for sufficiently large $\eta_0, \eta_1 > 0$, it holds that
\[ E(\bm y) \le \liminf_{h \to 0} E_h(\bm y_h). \]
\end{proposition}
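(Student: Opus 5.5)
We combine the lower bound of Lemma~\ref{th:lower bound} with the weak convergence of the discrete Hessian from Lemma~\ref{th:weak hessian}, and then use weak lower semicontinuity of the $L^2$ norm. Throughout, we fix $\eta_0,\eta_1$ large enough for Lemma~\ref{th:lower bound} to hold, and we let $(\bm y_h)$ be a subsequence realizing $\liminf_{h\to0} E_h(\bm y_h)$. The $\liminf$ of the full sequence is the limit along this subsequence, so it suffices to prove the inequality there.

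\textbf{Step 1 (bound and weak limit).} By Lemma~\ref{th:lower bound}, $\frac12\|H_h(\bm y_h)\|^2_{L^2(\Omega)} \le E_h(\bm y_h) + (\bm f,\bm y_h) \le C + \|\bm f\|_{L^2(\Omega)}\|\bm y_h\|_{L^2(\Omega)}$. The right-hand side is bounded because $\bm y_h$ converges strongly in $L^2(\Omega)^3$. Since $\bm y \in \mathbb A \subset V$ and $\bm y_h\to\bm y$ in $L^2$, Lemma~\ref{th:weak hessian} gives a further subsequence with $H_h(\bm y_h)\rightharpoonup\nabla^2\bm y$ weakly in $L^2(\Omega)$.

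\textbf{Step 2 (passing to the limit).} Weak lower semicontinuity of the norm gives
\[
\frac12\|\nabla^2\bm y\|^2_{L^2(\Omega)} \le \liminf_{h\to0}\frac12\|H_h(\bm y_h)\|^2_{L^2(\Omega)}.
\]
The strong $L^2$ convergence gives $(\bm f,\bm y_h)\to(\bm f,\bm y)$. Combining these with Lemma~\ref{th:lower bound},
\[
E(\bm y)=\frac12\|\nabla^2\bm y\|^2_{L^2(\Omega)}-(\bm f,\bm y) \le \liminf_{h\to0}\Big(\frac12\|H_h(\bm y_h)\|^2_{L^2(\Omega)}-(\bm f,\bm y_h)\Big) \le \liminf_{h\to0}E_h(\bm y_h).
\]

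\textbf{Main obstacle.} The delicate point is the boundary data. Lemma~\ref{th:lower bound} as restated contains no $F_h$ term. One must check that the lifting $R_h$ includes the Dirichlet edges with the data $\bm G_D$, so that $H_h$ accounts for the boundary jumps and the weak limit in Lemma~\ref{th:weak hessian} is the true Hessian of $\bm y$, not a distributional Hessian polluted by boundary terms. Part of this check is already done: Proposition~\ref{th:compactness} shows $\bm y\in V_D$, and the hypotheses place $\bm y$ in $\mathbb A$. If the boundary terms were not absorbed in this way, I would redo the proof of \cite[Proposition~4.5]{bonito2021dg}. Expanding $a_h(\bm y_h,\bm y_h)-2F_h(\bm y_h)$ and completing the square with the lifting of $[\nabla_h\bm y_h]-\bm G_D$ on $\Gamma_h^b$ yields $\frac12\|H_h(\bm y_h)\|^2$ plus nonnegative penalty terms minus data terms independent of $\bm y_h$. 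These data terms would be handled by the same argument or would not depend on $\bm y_h$.
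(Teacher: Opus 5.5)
Your Steps 1--2 are the paper's argument: Lemma~\ref{th:lower bound}, the weak convergence $H_h(\bm y_h)\rightharpoonup\nabla^2\bm y$ from Lemma~\ref{th:weak hessian}, weak lower semicontinuity of the norm, and strong convergence of $(\bm f,\bm y_h)$. Your passage to a subsequence realizing the $\liminf$ and your explicit bound on $\|H_h(\bm y_h)\|_{L^2(\Omega)}$ are, if anything, more careful than the paper's. One correction to your fallback remark. The boundary data matter for keeping $H_h(\bm y_h)$ bounded and for Lemma~\ref{th:lower bound}; they do not affect which weak limit you get, since interior test functions fix it. More importantly, the constant $\frac{\eta_1}{2}\|h^{-1/2}\bm G_D\|^2_{L^2(\Gamma_h^b)}+\frac{\eta_0}{2}\|h^{-3/2}\bm y_D\|^2_{L^2(\Gamma_h^b)}$ subtracted after completing the square is independent of $\bm y_h$ but diverges as $h\to0$ for nonzero data. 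It therefore cannot be discarded in a $\liminf$ and has to be built into $E_h$, as in \cite{bonito2021dg}.
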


\begin{proof}
Invoking Lemma \ref{th:lower bound}, one has
\[ \frac12 \| H_h(\bm y_h) \|^2_{L^2(\Omega)} - (\bm f, \bm y_h) \le E_h(\bm y_h). \]
Lemma \ref{th:weak hessian} entails that
\[ \| \nabla^2 \bm y \|_{L^2(\Omega)} \le \liminf_{h \to 0} \| H_h(\bm y_h) \|_{L^2(\Omega)}. \]
Together with $\bm y_h \to \bm y$ strongly in $L^2(\Omega)$, one gets
\[ E(\bm y) = \frac12 \|\nabla^2 \bm y\|^2_{L^2(\Omega)} - (\bm f, \bm y) \le \liminf_{h \to 0} E_h(\bm y_h). \qedhere \] 
\end{proof}

\begin{proposition}[$\limsup$]
\label{th:limsup}
Let $\bm y \in \mathbb A$.
There exists $(\bm y_h)_{h > 0} \in \mathbb A_h$ such that $\bm y_h \to \bm y$ strongly in $L^2(\Omega)$ and $E(\bm y) \ge \limsup_{h \to 0} E_h(\bm y_h)$.
\end{proposition}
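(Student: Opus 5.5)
We need a recovery sequence $\bm y_h \in \mathbb A_h$ with $\bm y_h \to \bm y$ in $L^2$ and $\limsup_h E_h(\bm y_h) \le E(\bm y)$. The plan is to take a good interpolant and then correct it cellwise so that the gradient at each barycenter is exactly in $St(3,2)$.

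\textbf{Step 1 (smoothing).} If $\bm y$ had $H^3$ regularity, a Lagrange or quasi-interpolant would do. For a general $\bm y \in \mathbb A \subset H^2$, density of smooth isometries in $H^2$-isometries is delicate: it is a known result of Hornung for Lipschitz piecewise-$C^1$ domains, used in \cite{bonito2021dg}, and I will invoke it. By a diagonal argument it suffices to treat $\bm y \in \mathbb A \cap H^3(\Omega)^3$, or smooth up to the boundary, with the boundary conditions preserved. Alternatively, I can avoid density and work directly with a local $L^2$ projection $\tilde{\bm y}_h := \Pi_h \bm y \in V_h$ of $H^2$ functions, since $\|\bm y - \tilde{\bm y}_h\|_{H^2_h(\Omega)} \to 0$ and $\|H_h(\tilde{\bm y}_h)-\nabla^2\bm y\|_{L^2}\to 0$. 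Either way, the standard IPDG consistency gives
\[
E_h(\tilde{\bm y}_h) \to E(\bm y),
\]
because the jump terms vanish for the exact $\bm y$ (which satisfies the boundary data) and the interpolation error goes to zero in the $H^2_h$ norm.

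\textbf{Step 2 (projection onto the constraint).} On each $T$, let $A_T := \nabla \tilde{\bm y}_h(\bm x_T)$. Since $\nabla\bm y\in St(3,2)$ a.e., the quantity $A_T - \frac{1}{|T|}\int_T\nabla\bm y$ is small. However, the average $\frac1{|T|}\int_T \nabla \bm y$ itself need not lie in $St(3,2)$. Its distance to $St(3,2)$ is bounded by
\[
\frac1{|T|}\int_T|\nabla\bm y - \overline{\nabla \bm y}_T| \lesssim h_T |T|^{-1/2}\|\nabla^2\bm y\|_{L^2(T)}.
\]
Let $P(A_T)$ be the polar-type projection of $A_T$ onto $St(3,2)$, namely $A_T(A_T^\tr A_T)^{-1/2}$, which is well defined when $A_T$ is near the manifold. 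Define
\[
\bm y_h|_T := \tilde{\bm y}_h|_T + (P(A_T)-A_T)(\bm x-\bm x_T).
\]
Then $\bm y_h\in\mathbb A_h$.

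\textbf{Step 3 (the correction is negligible).} This is the main obstacle. Write $\delta_T := |P(A_T)-A_T| \lesssim \operatorname{dist}(A_T,St(3,2))$. The correction is affine, so it adds nothing to $\nabla_h^2$, but it adds jumps of size $\delta_T$ in the gradient and of size $h\delta_T$ in the function. These contribute to $\|\cdot\|_{H^2_h}^2$ roughly $\sum_T h_T^{-2}\delta_T^2|T|$. We therefore need
\[
\sum_T h^{-2}|T|\,\operatorname{dist}(A_T,St)^2 \to 0.
\]
The crude bound above only gives $\sum_T \|\nabla^2\bm y\|_{L^2(T)}^2 = O(1)$, which is bounded but not vanishing. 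This is why Step 1 uses smooth isometries. For $\bm y$ of class $C^2$ with $\nabla \bm y^\tr\nabla\bm y=I$ pointwise, we have $\nabla\bm y(\bm x_T)\in St$ exactly, so $\operatorname{dist}(A_T,St)\le|A_T-\nabla\bm y(\bm x_T)| \lesssim h^{2}$, which holds for $H^3$-type or smooth $\bm y$. Hence the sum is $O(h^2)\to0$. Local Lipschitz continuity of $P$ near $St(3,2)$ justifies the bound on $\delta_T$.

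Combining the steps, $\|\bm y_h-\tilde{\bm y}_h\|_{H^2_h}\to0$, so by continuity of $E_h$ in the $H^2_h$ norm (uniformly in $h$) we get $E_h(\bm y_h)\to E(\bm y)$ and $\bm y_h\to\bm y$ in $L^2$. A diagonal argument over the smooth approximants then completes the proof for general $\bm y \in \mathbb A$.
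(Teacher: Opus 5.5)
The gap is in Step 1. Steps 2 and 3 are sound as bookkeeping: the affine shift $(P(A_T)-A_T)(\bm x-\bm x_T)$ leaves $\nabla_h^2$ unchanged, puts $\nabla_h\bm y_h(\bm x_T)$ exactly on $St(3,2)$, and adds only $\lesssim\sum_T\delta_T^2$ to $\|\cdot\|^2_{H^2_h(\Omega)}$.

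Hornung's density theorem concerns $W^{2,2}$ isometric immersions \emph{without} boundary conditions. It does not give smooth isometries that also satisfy $\bm y=\bm y_D$ and $(\nabla\bm y)\bm n=\bm G_D$ on $\Gamma^D$. That is, it does not give density of smooth elements of $\mathbb A$ in $\mathbb A$, which your reduction ``with the boundary conditions preserved'' needs.

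A diagonal argument cannot absorb a boundary mismatch either. $E_h$ penalizes the mismatch with weights $h^{-3}$ and $h^{-1}$, so along $\delta=\delta(h)$ you would need $\|\bm y_\delta-\bm y\|_{L^2(\Gamma^D)}=o(h^{3/2})$. At the same time, your bound on $|A_T-\nabla\bm y_\delta(\bm x_T)|$ needs $h$ times a third-derivative norm of $\bm y_\delta$ to vanish. Nothing controls how fast that norm grows as $\delta\to0$.

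The missing idea, which also removes the need for density, is that the distance of a \emph{cell average} of $\nabla\bm y$ to $St(3,2)$ is quadratic, not linear, in the oscillation. Your crude bound loses exactly this. Set $G:=\nabla\bm y$ and $\bar G_T:=\frac1{|T|}\int_T G\dd x$. The constraint $G^\tr G=I$ a.e.\ gives
\[
\bar G_T^\tr\bar G_T=I-S_T,
\qquad
S_T:=\frac1{|T|}\int_T(G-\bar G_T)^\tr(G-\bar G_T)\dd x .
\]
Poincar\'e and shape regularity then give $|S_T|\lesssim\|\nabla^2\bm y\|^2_{L^2(T)}$. Since the singular values of $\bar G_T$ are $\sqrt{1-\lambda_i(S_T)}$, it follows that $\operatorname{dist}(\bar G_T,St(3,2))\le|S_T|$ (and $P$ is well defined once $|S_T|\le\frac12$).

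Now take $\tilde{\bm y}_h|_T\in\mathbb P^2(T)^3$ matching the cell means of $\bm y$, $\nabla\bm y$ and $\nabla^2\bm y$. Then:
\begin{itemize}
\item $\nabla\tilde{\bm y}_h(\bm x_T)=\bar G_T$.
\item Two Poincar\'e inequalities and a trace inequality give $\|\bm y-\tilde{\bm y}_h\|_{H^2_h(\Omega)}\lesssim\|\nabla^2\bm y-\Pi_0\nabla^2\bm y\|_{L^2(\Omega)}\to0$, with $\Pi_0$ the cellwise mean. Here the jumps of $\bm y$ vanish, including against the data on $\Gamma^D$.
\item By absolute continuity of the integral,
\[
\sum_{T\in\mathcal T_h}\delta_T^2\lesssim\sum_{T\in\mathcal T_h}\|\nabla^2\bm y\|^4_{L^2(T)}\le\Big(\max_{T\in\mathcal T_h}\|\nabla^2\bm y\|^2_{L^2(T)}\Big)\|\nabla^2\bm y\|^2_{L^2(\Omega)}\to0 .
\]
\end{itemize}
Your local $L^2$ projection also works, since $\sum_T|A_T-\bar G_T|^2\lesssim\|\nabla^2\bm y-\Pi_0\nabla^2\bm y\|^2_{L^2(\Omega)}$. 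Your polar correction then applies verbatim to the given $\bm y\in\mathbb A$, which meets the boundary data exactly, and no smooth approximation is needed.

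A smaller point: $E_h$ is not continuous in $\|\cdot\|_{H^2_h(\Omega)}$ uniformly in $h$, because $F_h$ carries the weights $h^{-3}$ and $h^{-1}$. Write instead, with $d:=\bm y_h-\tilde{\bm y}_h$,
\[
E_h(\bm y_h)-E_h(\tilde{\bm y}_h)=a_h(\tilde{\bm y}_h,d)-F_h(d)+\tfrac12a_h(d,d)-(\bm f,d).
\]
In $a_h(\tilde{\bm y}_h,d)-F_h(d)$ the boundary terms involve only $\tilde{\bm y}_h-\bm y_D$ and $(\nabla_h\tilde{\bm y}_h)\bm n-\bm G_D$. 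These are bounded in the weighted norms, so the difference is $O(\|d\|_{H^2_h(\Omega)})\to0$.
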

\noindent
The proof of Proposition~\ref{th:limsup} is the same as the proof of \cite[Theorem~9~(iii)]{MR4839135}.
It is omitted for concision.

\subsection{Convergence of discrete minimizers}

Note that Lemma \ref{th:existence discrete minimizer} ensures that our method can produce the minimizers invoked in the following theorem.

\begin{theorem}
Let $(\bm y_h)_{h > 0} \in \mathbb A_h$ be a sequence of global minimizers of $E_h$ over $\mathbb A_h$.
For sufficiently large $\eta_0, \eta_1 > 0$, there exists $\bm y \in \mathbb A$, global minimizer of $E$ over $\mathbb A$ such that, up to a subsequence (not relabeled),
\[ \begin{aligned}
&\bm y_h \to \bm y \text{ strongly in } L^2(\Omega), \\
&\nabla_h \bm y_h \to \nabla \bm y \text{ strongly in } L^2(\Omega), \\
&\lim_{h \to 0} E_h(\bm y_h) = E(\bm y).
\end{aligned} \]
\end{theorem}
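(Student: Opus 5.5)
The plan is the standard $\Gamma$-convergence argument, built from Proposition~\ref{th:compactness}, Proposition~\ref{th:liminf} and Proposition~\ref{th:limsup}.

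\textbf{Step 1: uniform energy bound.} Fix any $\bm z \in \mathbb A$; this set is nonempty by assumption. Proposition~\ref{th:limsup} supplies a recovery sequence $(\bm z_h)_{h>0}$ with $\bm z_h \in \mathbb A_h$ and $\limsup_{h\to0} E_h(\bm z_h) \le E(\bm z)$. Since each $\bm y_h$ is a global minimizer over $\mathbb A_h$, we have $E_h(\bm y_h) \le E_h(\bm z_h)$. Hence $E_h(\bm y_h) \le C$ for all sufficiently small $h$, and we discard the finitely many, or bounded-away, remaining indices.

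\textbf{Step 2: compactness.} Proposition~\ref{th:compactness} then yields $\bm y \in \mathbb A$ and a subsequence along which $\bm y_h \to \bm y$ in $L^2(\Omega)$ and $\nabla_h \bm y_h \to \nabla \bm y$ in $L^2(\Omega)$. This gives the first two claimed convergences.

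\textbf{Step 3: minimality of the limit.} Proposition~\ref{th:liminf}, valid for $\eta_0,\eta_1$ large enough, gives $E(\bm y) \le \liminf_{h\to0} E_h(\bm y_h)$. Now let $\bm v \in \mathbb A$ be arbitrary and take its recovery sequence $\bm v_h$ from Proposition~\ref{th:limsup}. By discrete minimality,
\[ E(\bm y) \le \liminf_{h\to 0} E_h(\bm y_h) \le \limsup_{h\to0} E_h(\bm y_h) \le \limsup_{h\to0} E_h(\bm v_h) \le E(\bm v). \]
So $\bm y$ is a global minimizer of $E$ over $\mathbb A$.

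\textbf{Step 4: convergence of energies.} Choosing $\bm v = \bm y$ in the chain above forces every inequality to be an equality, so $\lim_{h\to0} E_h(\bm y_h) = E(\bm y)$.

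\textbf{Main obstacle.} The delicate point is compatibility of the subsequences: the recovery sequences are indexed by the full family $h$, so they must be restricted to the subsequence extracted in Step 2. This is harmless because $\limsup$ over a subsequence is at most the full $\limsup$. A second point to check is that the $\eta_0,\eta_1$ requirement in Proposition~\ref{th:liminf} is compatible with Proposition~\ref{th:limsup}, which holds for any fixed penalties. Beyond this, the argument is purely formal given the earlier results.
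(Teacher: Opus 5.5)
Your proposal is correct and follows the same $\Gamma$-convergence route as the paper: compactness via Proposition~\ref{th:compactness}, the $\liminf$ bound of Proposition~\ref{th:liminf}, and a recovery sequence from Proposition~\ref{th:limsup} combined with discrete minimality $E_h(\bm y_h) \le E_h(\bm w_h)$. You also supply two steps the paper's proof leaves implicit: you justify the uniform bound $E_h(\bm y_h) \le C$ through a recovery sequence for a fixed $\bm z \in \mathbb A$, and you prove global minimality of the limit $\bm y$ by testing against an arbitrary $\bm v \in \mathbb A$, whereas the paper only uses the recovery sequence of $\bm y$ itself.
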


\begin{proof}
We invoke Lemma~\ref{th:existence discrete minimizer} to obtain a sequence $(\bm y_h)_{h>0} \subset \mathbb A_h$ of discrete minimizers of $E_h$ over $\mathbb A_h$.  
Therefore, there exists $C > 0$, independent of $h > 0$ such that $E_h(\bm y_h) \le C$.
We invoke Proposition \ref{th:compactness} to get $\bm y \in \mathbb A$ such that $\bm y_h \to \bm y$ strongly in $L^2(\Omega)$ and $\nabla_h \bm y_h \to \nabla \bm y$ strongly in $L^2(\Omega)$.
We then apply Proposition \ref{th:liminf} to establish that
\[ E(\bm y) \le \liminf_{h \to 0} E_h(\bm y_h). \]
Next, we apply Proposition \ref{th:limsup} with a sequence $(\bm w_h)_{h > 0} \in \mathbb A_h$ such that
\[ \limsup_{h \to 0} E_h(\bm w_h) \le E(\bm y). \]
Then we use the fact that $\bm y_h$ is a global minimizer of $E_h$ to get that $E_h(\bm y_h) \le E_h(\bm w_h)$.
Then one has
\[ E(\bm y) \le \liminf_{h \to 0} E_h(\bm y_h) \le \limsup_{h \to 0} E_h(\bm w_h) \le E(\bm y). \] 
This implies that
\[ \lim_{h \to 0} E_h(\bm y_h) = E(\bm y). \qedhere \]
\end{proof}

\section{Numerical tests}
\label{sec:numerics}
We apply the method developed in this paper to two test cases from \cite{bartels2013approximation,bonito2021dg}.
In our experiments, we select the penalty parameters $\eta_1 = \eta_2 = 100$.
These parameters are much smaller than in \cite{bonito2021dg}, which improves the conditioning of the linear systems.
All the numerical tests have been implemented using \textit{Firedrake} \cite{Dalcin2011,FiredrakeUserManual}, which itself relies on \textit{PETSc} \cite{petsc-user-ref,petsc-efficient}.

\subsection{Practical algorithm}
In practice, numerical errors arising from floating-point arithmetic or inexact subproblem solves accumulate, causing violations to the constraint $\nabla_h \bm y_h^k(\bm x_T) \in St(3,2)$ for all $T \in \mathcal T_h$ implied by~\eqref{eq:stay in Stiefel}.
Fortunately, this numerical issue can easily be avoided by replacing $\nabla_h \bm y^k_h(\bm x_T)$ in~\eqref{eq:discrete problem} by an approximation $G_h^k \in M_h$ guaranteed to belong to $St(3,2)$ at each $T \in \mathcal T_h$.

Given such an approximate gradient $G_h^{k} \approx \nabla_h \bm y_h^{k}$, the practical algorithm involves first finding $(\bm y_h^{k+1}, \mu^{k+1}_h, \gamma_h^{k+1}) \in V_h \times M_h \times \Upsilon_h$ such that 
\begin{subequations}
\label{eq:modified}
\begin{alignat}{3}
  a_h(\bm{y}^{k+1}_h,\bm w_h) + (\mu^{k+1}_h,\nabla_h \bm w_h) + l_h(G_h^k;\nabla_h \bm w_h, \gamma_h^{k+1})
  &=
  (\bm f, \bm w_h) + F_h(\bm w_h),
  &&~~ \forall \bm w_h \in V_h, \\
  \sum_{T \in \mathcal T_h} (\Exp_{G_h^k(\bm x_T)} (\tau \mu_h^{k+1}),v_h)_{L^2(T)} - (\nabla_h \bm y_h^{k+1}, v_h)
  &= 0,
  &&~~ \forall v_h \in M_h,\\
  l_h(G_h^k;\mu_h^{k+1}, \zeta_h) &= 0, &&~~ \forall \zeta_h \in \Upsilon_h.
 \end{alignat}
\end{subequations}
After solving~\eqref{eq:modified}, the next approximate gradient is defined
\begin{equation}
\label{eq:G_update}
  G_h^{k+1}(\bm x_T)  := \Exp_{G_h^k(\bm x_T)}(\tau \Pi_{G_h^k(\bm x_T)}\mu_h^{k+1}(\bm x_T)), \quad \forall T \in \mathcal T_h,
\end{equation}
where $\Pi_{G_h^k(\bm x_T)} \colon \mathbb R^{2 \times 3} \to T_{G_h^k(\bm x_T)} St(2,3)$ is the orthogonal projection given in~\eqref{eq:projection}.
We summarize the process in Algorithm~\ref{algo}.

\begin{algorithm}
	\caption{Proximal Galerkin method for the isometry constraint}
 \label{algo}
	\begin{algorithmic}[1]
\Require Stopping tolerance $\texttt{tol.}>0$, initial guess $(\bm y_h^0, G_h^0) \in V_h \times M_h$.
        \State Set $k=-1$.
        \Repeat
        \State Assign $k \leftarrow k+1$.
        \State Compute a solution $(\bm y_h^{k+1}, \mu_h^{k+1}, \gamma_h^{k+1}) \in V_h \times M_h \times \Upsilon_h$ of~\eqref{eq:modified} via Newton's method.
        \State Define $G_h^{k+1}$ via~\eqref{eq:G_update}.
		\Until{$\tau^{-1} \left| E_h(\bm y_h^k) - E_h(\bm y_h^{k+1}) \right| < \texttt{tol.}$}
        \State Return $\bm y_h^k$. 
	\end{algorithmic} 
\end{algorithm}

Note that the convergence criterion in Algorithm~\ref{algo} is different from the criterion suggested in Remark~\ref{rk:convergence}.
This is only because we want to be able to compare our algorithm with the method from \cite{bonito2021dg}.
In practice, we still advocate for stopping based on the norm of $\mu_h^{k+1}$.

\subsection{Vertical load on a square domain}
In this test, the domain is $\Omega = (0,4)^2$ and $\Gamma^D = [\{0\} \times (0,4)] \cup [(0,4) \times \{0\}]$.
The Dirichlet boundary conditions are $\bm y_D(x_1,x_2) = (x_1, x_2, 0)$ and $\bm G_D = (n_1, n_2, 0)$, where $\bm n = (n_1,n_2)$ is the outward unit normal to $\partial \Omega$.
An upward pointing force $\bm f$ is applied everywhere on the domain.
Two different values of the magnitude of $\bm f$ are considered in the following test case.
We use as initial guess $\bm y^0(x_1,x_2) = (x_1,x_2,0) \in \mathbb A$.

We compare Algorithm \ref{algo} to the method from \cite{bonito2021dg} on a given set of structured meshes.
The only difference in the results reported for the method from \cite{bonito2021dg} is that we implement the modifications from \cite{MR4839135} to impose the isometry constraint at the barycenter of cells.
This leads to the following definition of the isometry defect:
\[ D_h(\bm y_h) := \max_{T \in \mathcal T_h} |\nabla_h \bm y_h^\tr \nabla_h \bm y_h-I|(\bm x_T),
\qquad \bm y_h \in V_h. \]
Note that the pseudo-time step for the gradient flow method in \cite{bonito2021dg} is $\tau = h$ in order to control the isometry defect.
This choice is widely advocated for in the literature for that class of algorithms and also followed here.
We stop both algorithms once the energetic stopping criterion $\tau^{-1} \left| E_h(\bm y_h^k) - E_h(\bm y_h^{k+1}) \right| < 10^{-4}$ is attained; cf.\ line 6 of Algorithm~\ref{algo}.
The subiteration counts reported below represent the cumulative number of Newton steps taken across the subproblem solves in line~4 of Algorithm~\ref{algo}.

\subsubsection{Weak volume force}
The upward pointing force $\bm f = 2.5 \cdot 10^{-2}\bm e_3$ is applied to the domain.
We fix $\tau = 2.0$ in Algorithm~\ref{algo} since we do not have a mesh-dependent restriction on the pseudo-time step size.
Table \ref{tab:grad flow 1} reports the results for the gradient flow method, whereas Table \ref{tab:PPG 1} reports the results from Algorithm \ref{algo}.

\begin{table}[htbp]
\centering
\begin{tabular}{|c|c|c|c|c|}
\hline
\# of Cells & dofs & $E_h(\bm y_h)$ & $D_h(\bm y_h)$ & Iterations  \\
\hline
400 & 8,400 & $-9.05 \cdot 10^{-3}$ & $7.58 \cdot 10^{-4}$ & 16 \\ \hline
1,600 & 33,600 & $-7.88 \cdot 10^{-3}$ & $3.30 \cdot 10^{-4}$ & 41 \\ \hline
6,400 & 134,400 & $-6.00 \cdot 10^{-3}$ & $1.26 \cdot 10^{-4}$ & 70 \\ \hline
14,400 & 302,400 & $-4.84 \cdot 10^{-3}$ & $7.02 \cdot 10^{-5}$ & 71 \\ \hline
\end{tabular}
\caption{Weak volume force.
Gradient flow from \cite{bonito2021dg} with $\tau = h$.}
\label{tab:grad flow 1}
\end{table}

\begin{table}[htbp]
\centering
\begin{tabular}{|c|c|c|c|c|c|}
\hline
\# of Cells & dofs & $E_h(\bm y_h)$ & $D_h(\bm y_h)$ & Iterations  & Subiterations \\
\hline
400 & 10,800 & $-9.80 \cdot 10^{-3}$ & $3.24 \cdot 10^{-14}$ & 4 & 14 \\ \hline
1,600 & 43,200 & $-9.49 \cdot 10^{-3}$ & $6.37 \cdot 10^{-14}$ & 4 & 15 \\ \hline
6,400 & 172,800 & $-8.59 \cdot 10^{-3}$ & $1.45 \cdot 10^{-13}$ & 3 & 12 \\ \hline
14,400 & 388,800 & $-7.82 \cdot 10^{-3}$ & $1.99 \cdot 10^{-13}$ & 3 & 13 \\ \hline
\end{tabular}
\caption{Weak volume force. Algorithm \ref{algo} with $\tau = 2$.}
\label{tab:PPG 1}
\end{table}

Observe that the standard gradient flow method requires an increasing number of iterations as the mesh is refined.
This aligns with the fact that the pseudo-time step $\tau$ decreases with the mesh size.
On the other hand, the total number of iterations and subiterations in Algorithm~\ref{algo} is uniformly bounded, due in part to the fact that $\tau$ is constant.
Next, notice that the isometry defect indeed remains on the order of machine error with Algorithm~\ref{algo}.
Although $D_h(\bm y_h)$ decreases with $h$ for the gradient flow method, the convergence rate is low, leading to a decrease of only one order of magnitude after four mesh refinements.
Finally, note that the final values of $E_h(\bm y_h)$ are significantly lower at the final iteration with Algorithm~\ref{algo} than with the standard gradient flow method.
This shows that enforcing the isometry constraint at each iteration does not cause Algorithm~\ref{algo} to be less efficient in minimizing the bending energy.

\subsubsection{Strong volume force}
The upward pointing force $\bm f = \bm e_3$ is applied to the domain.
In this case, we set $\tau = 5.0 \cdot 10^{-2}$ in Algorithm~\ref{algo} but continue to use $\tau = h$ for the standard gradient flow method.
Table~\ref{tab:grad flow 2} reports the results for the gradient flow method, whereas Table~\ref{tab:PPG 2} reports the results from Algorithm~\ref{algo}.

Although we did not change the step size for the gradient flow method, we terminated those computations after 1000 iterations due to slow convergence to the stopping criterion.
Instead, we report the weighted energy increment at the 1000th iteration.  
In contrast, Algorithm~\ref{algo} converges significantly faster, and $D_h(\bm y_h)$ remains at machine precision, as expected.  

\begin{table}[htbp]
\centering
\begin{tabular}{|c|c|c|c|c|c|}
\hline
\# of Cells & dofs & $E_h(\bm y_h)$ & $D_h(\bm y_h)$ & Iterations & $\tau^{-1}(E_h(\bm y_h^k) - E_h(\bm y_h^{k+1}))$  \\
\hline
400 & 8,400 & $-12.3$ & $7.56 \cdot 10^{-1}$ & 1,000 & $6.01 \cdot 10^{-3}$  \\ \hline
1,600 & 33,600 & $-7.06$ & $2.69 \cdot 10^{-1}$ & 1,000 & $5.13 \cdot 10^{-3}$ \\ \hline
6,400 & 134,400 & $-4.11$ & $8.45 \cdot 10^{-2}$ & 1,000 & $4.05 \cdot 10^{-3}$ \\ \hline
14,400 & 302,400 & $-3.46$ & $4.57 \cdot 10^{-2}$ & 1,000 & $5.15 \cdot 10^{-3}$ \\ \hline
\end{tabular}
\caption{Strong volume force. Gradient flow from \cite{bonito2021dg} with $\tau = h$.}
\label{tab:grad flow 2}
\end{table}

\begin{table}[htbp]
\centering
\begin{tabular}{|c|c|c|c|c|c|}
\hline
\# of Cells & dofs & $E_h(\bm y_h)$ & $D_h(\bm y_h)$ & Iterations  & Subiterations \\
\hline
400 & 10,800 & $-5.41$ & $2.48 \cdot 10^{-14}$ & 112 & 236 \\ \hline
1,600 & 43,200 & $-4.21$ & $7.35 \cdot 10^{-14}$ & 116 & 306 \\ \hline
6,400 & 172,800 & $-3.50$ & $1.25 \cdot 10^{-13}$ & 99 & 281 \\ \hline
14,400 & 388,800 & $-3.22$ & $2.44 \cdot 10^{-13}$ & 93 & 264 \\ \hline
\end{tabular}
\caption{Strong volume force. Algorithm \ref{algo} with $\tau = 5.0 \cdot 10^{-2}$.}
\label{tab:PPG 2}
\end{table}

\subsection{Buckling of a strip}
Our final experiment follows \cite{bartels2013approximation,bartels2013finite,bonito2021dg}.  
To this end, we set $\Omega = (-2,2) \times (0,1)$ and $\Gamma^D = \{-2,2\} \times (0,1)$.  
The Dirichlet boundary conditions are $\bm y_D(x_1,x_2) = (x_1 - \mathrm{sign}(x_1)\,1.4, x_2, 0)$ and $\bm G_D = (n_1,n_2,0)$, where $\bm n = (n_1,n_2)$ is the outward unit normal to $\partial \Omega$.  
To address non-uniqueness, we add a small upward force $\bm f = 10^{-5}\bm e_3$.  
We use as initial guess $\bm y^0(x_1,x_2) = (x_1,x_2,0) \in \mathbb A$.
The boundary condition $\bm y_D$ is imposed quasi-statically using pseudo-time steps of size $\delta t = 10^{-3}$, which is significantly larger than the $\delta t = 5.0 \cdot 10^{-5}$ used in \cite{bonito2021dg}, while $\bm G_D$ is enforced at each step.  
The mesh consists of 3,740 elements and 100,980 degrees of freedom.  
We set $\texttt{tol.} = 10^{-3}$ in Algorithm~\ref{algo}.  
As in \cite{bonito2021dg}, we display snapshots of the deformation in Figure~\ref{fig:buckling}.  
In our opinion Figures~\ref{fig:sub1}--\ref{fig:sub3} appear to capture the buckling process more accurately than those in \cite{bonito2021dg}.  
Moreover, the final isometry defect is negligible, with $D_h(\bm y_h) = 1.92 \cdot 10^{-13}$, reflecting the design of Algorithm~\ref{algo} to enforce the constraint~\eqref{eq:IsometryConstraint} exactly at each element midpoint.

\begin{figure}[htbp]
  \centering
  \colorbox{black!06}{
  % Row 1
  \begin{subfigure}[b]{0.46\textwidth}
    \centering
    \colorbox{black!05}{\includegraphics[width=\linewidth]{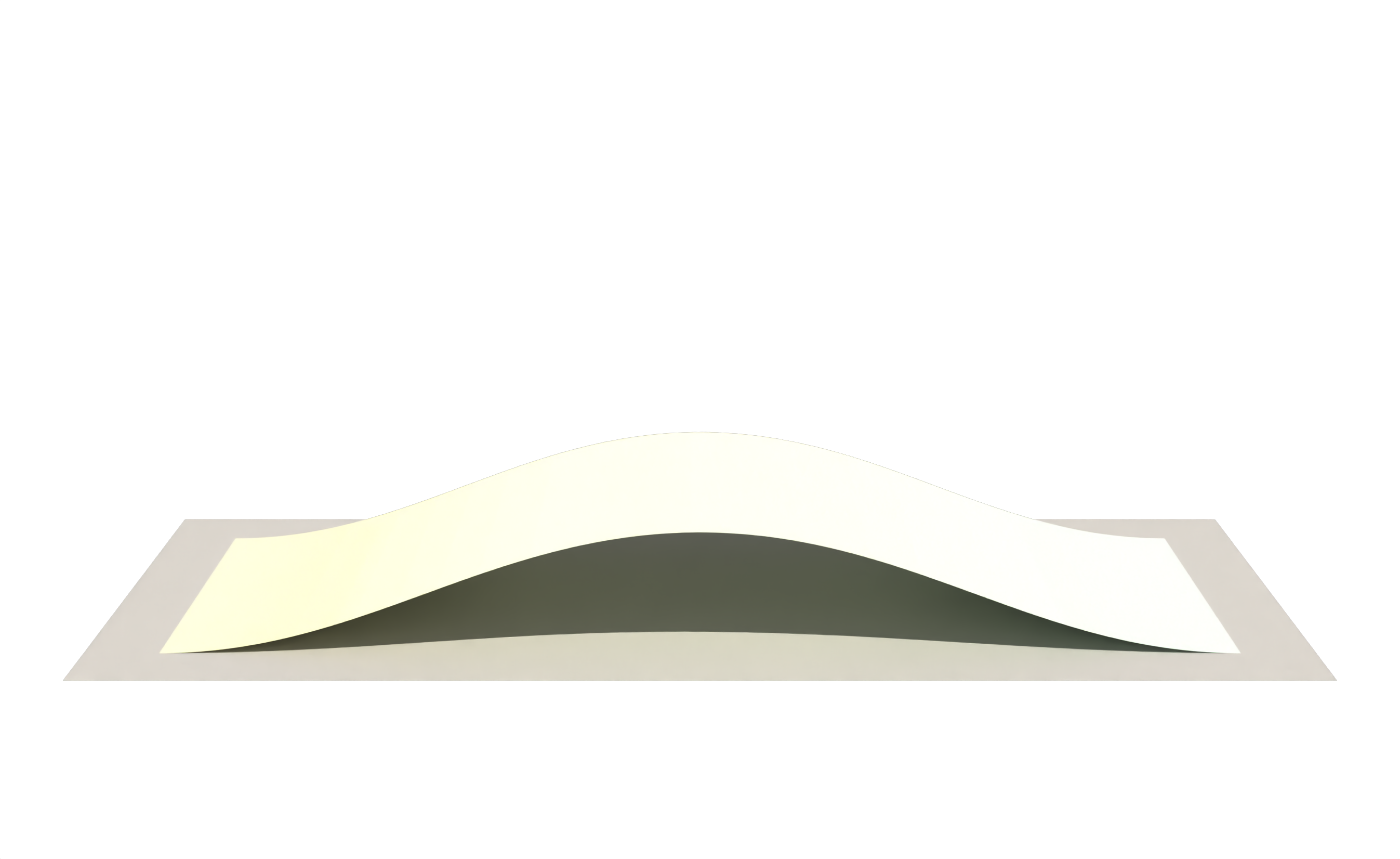}}
    \caption{$t=0.05$}
    \label{fig:sub1}
  \end{subfigure}\hfill
  \begin{subfigure}[b]{0.46\textwidth}
    \centering
    \colorbox{black!05}{\includegraphics[width=\linewidth]{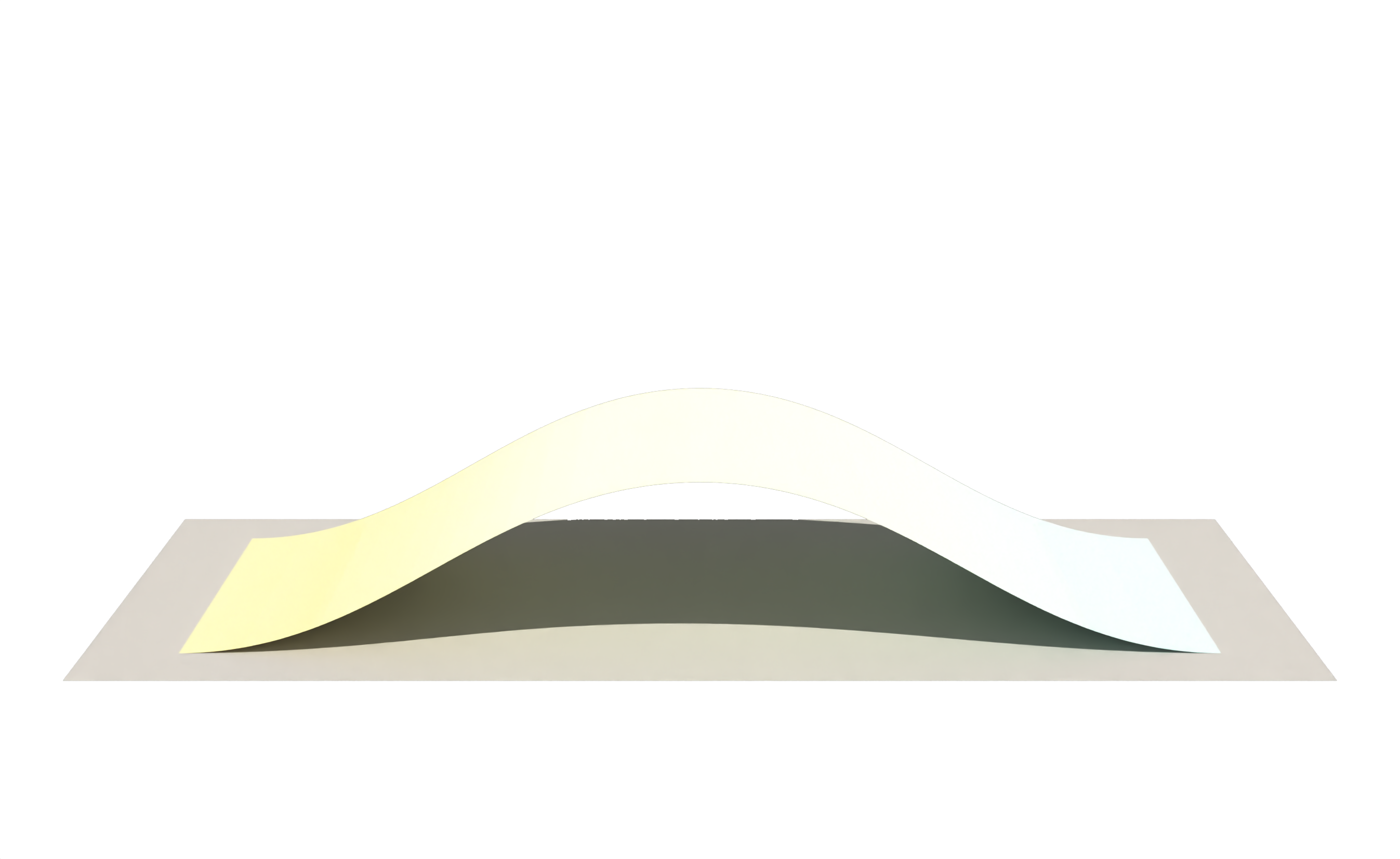}}
    \caption{$t=0.1$}
    \label{fig:sub2}
  \end{subfigure}
  \begin{subfigure}[b]{0.06\textwidth}
    \centering
    \colorbox{black!05}{\phantom{\includegraphics[width=0.95\linewidth]{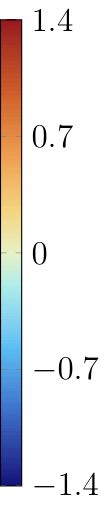}}}
    \caption*{~}
  \end{subfigure}
}
  \vspace*{-1.1em}

  \colorbox{black!05}{
  % Row 2
  \begin{subfigure}[b]{0.46\textwidth}
    \centering
    \colorbox{black!05}{\includegraphics[width=\linewidth]{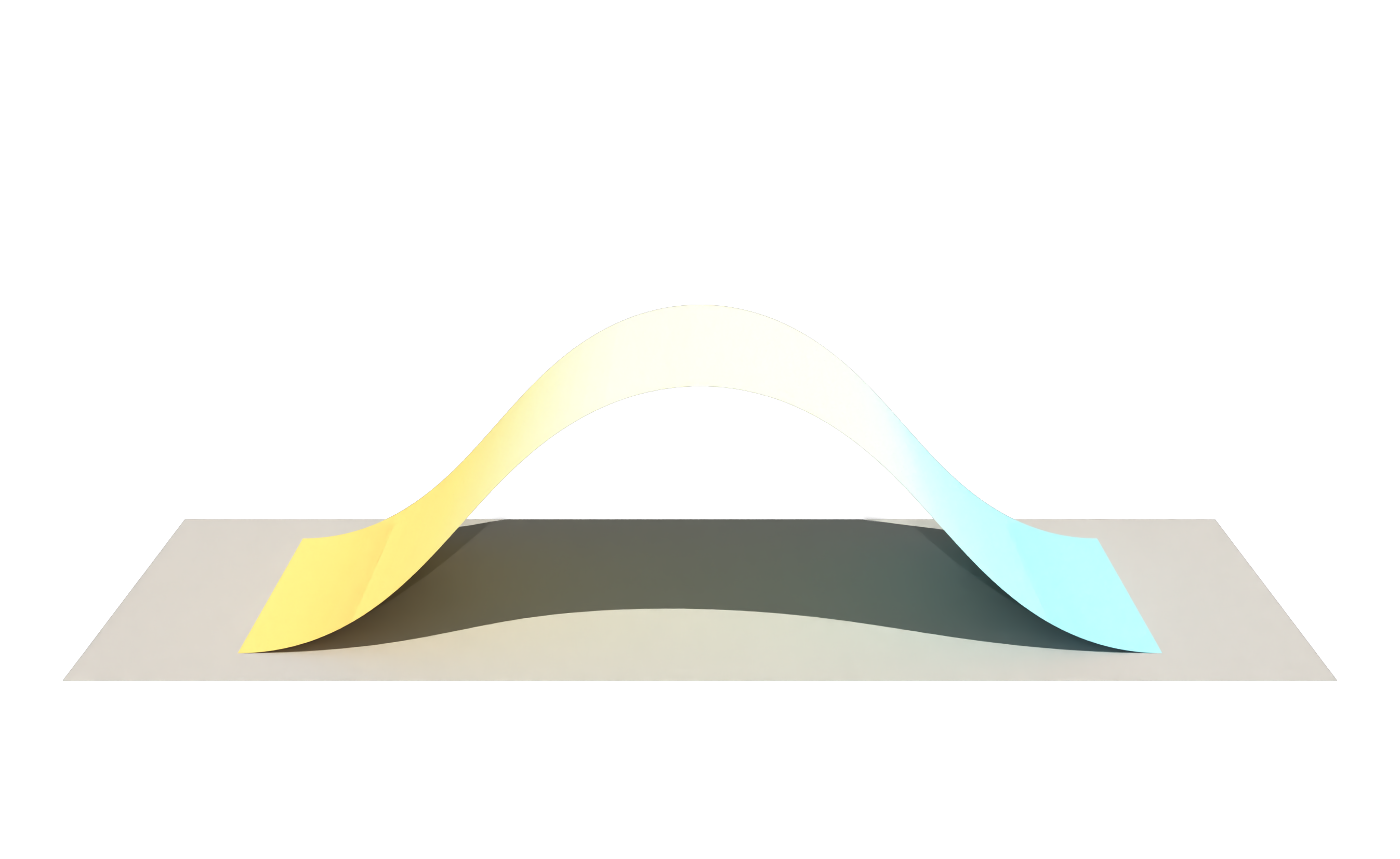}}
    \caption{$t=0.25$}
    \label{fig:sub3}
  \end{subfigure}\hfill
  \begin{subfigure}[b]{0.46\textwidth}
    \centering
    \colorbox{black!05}{\includegraphics[width=\linewidth]{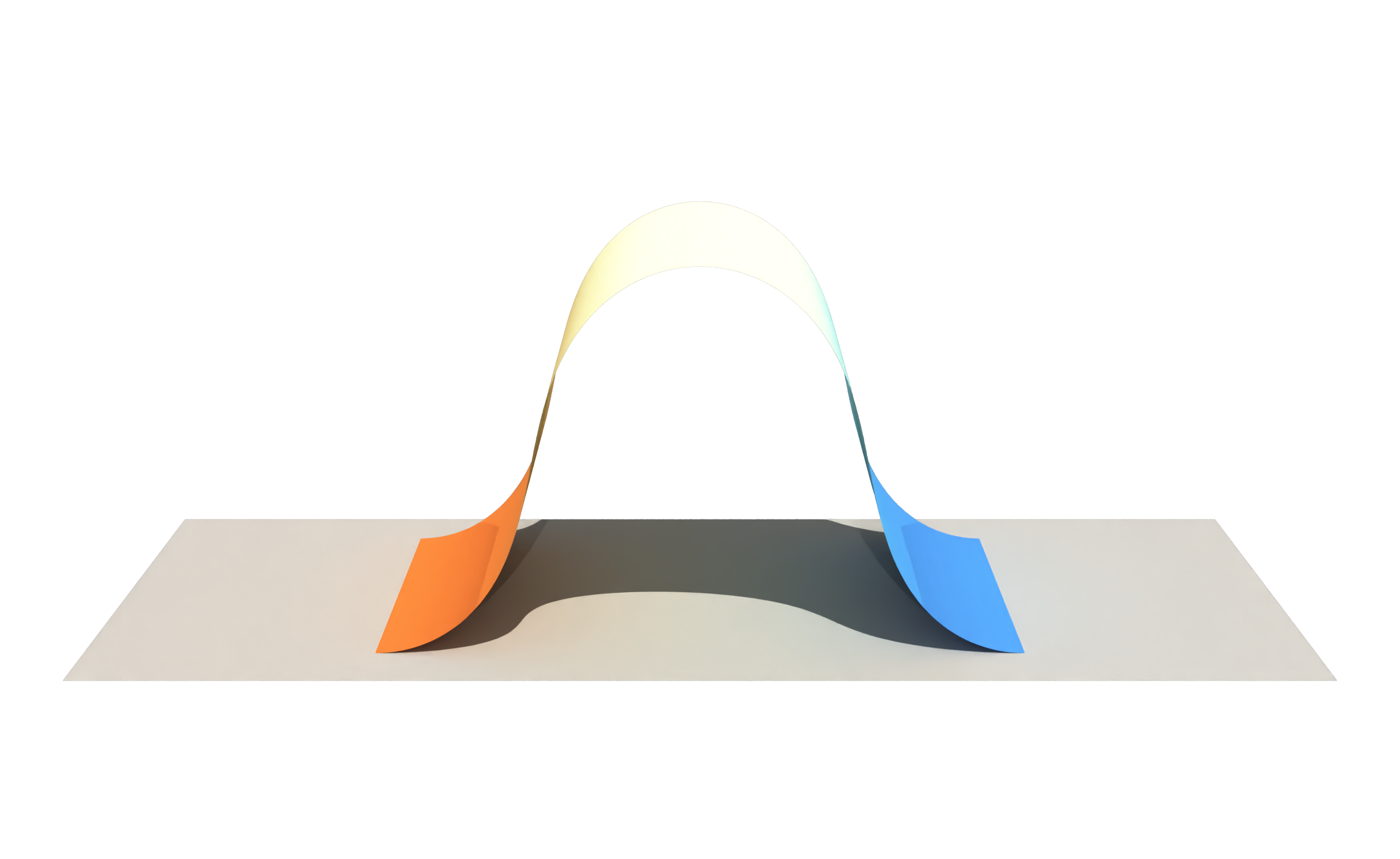}}
    \caption{$t=0.6$}
    \label{fig:sub4}
  \end{subfigure}
  \begin{subfigure}[b]{0.06\textwidth}
    \centering
    \colorbox{black!05}{\includegraphics[width=0.95\linewidth]{colorbar/colorbar.png}}
    \caption*{~}
    \label{fig:colorbar}
  \end{subfigure}
}

  \vspace*{-0.1em}

  % Row 3
  \colorbox{black!05}{
  \begin{subfigure}[b]{0.46\textwidth}
    \centering
    \colorbox{black!05}{\includegraphics[width=\linewidth]{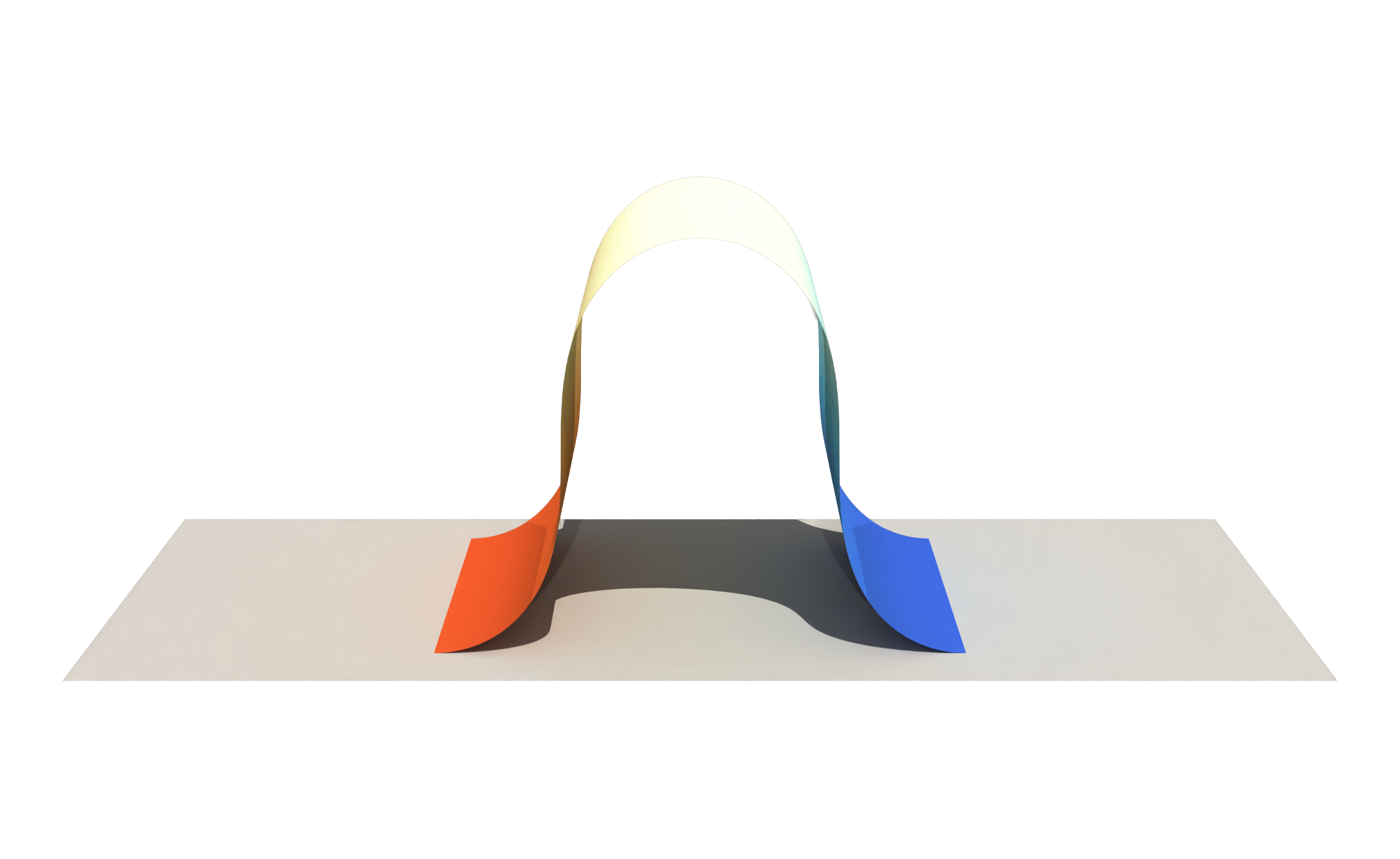}}
    \caption{$t=0.75$}
    \label{fig:sub5}
  \end{subfigure}\hfill
  \begin{subfigure}[b]{0.46\textwidth}
    \centering
    \colorbox{black!05}{\includegraphics[width=\linewidth]{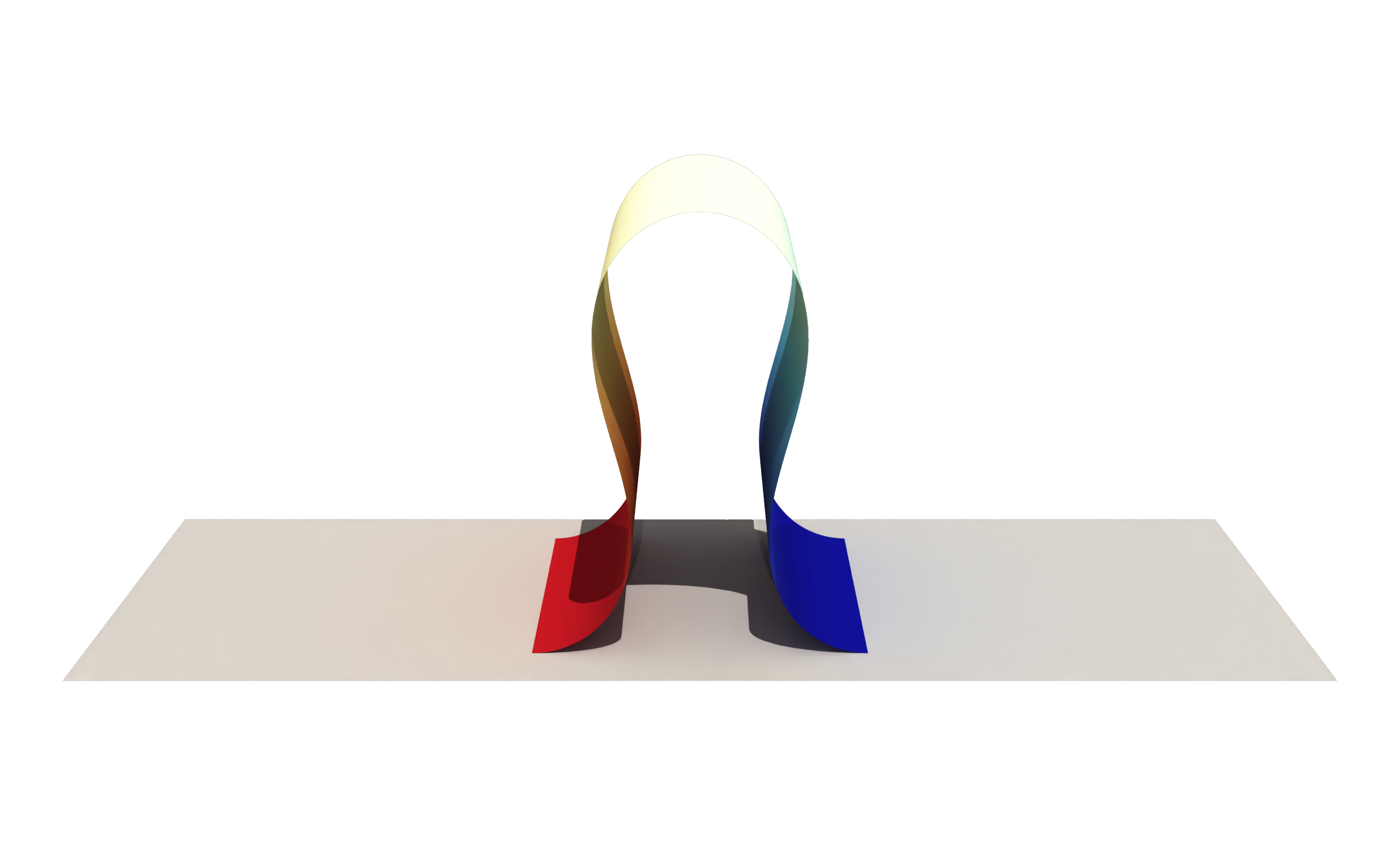}}
    \caption{$t=1$}
    \label{fig:sub6}
  \end{subfigure}
  \begin{subfigure}[b]{0.06\textwidth}
    \centering
    \colorbox{black!05}{\phantom{\includegraphics[width=0.95\linewidth]{colorbar/colorbar.png}}}
    \caption*{~}
  \end{subfigure}
}
  \caption{Buckling test based on \cite{bonito2021dg} with color depicting the amount of horizontal displacement.}
  \label{fig:buckling}
\end{figure}

\section*{Code availability}
The code is freely available at \url{https://github.com/marazzaf/iso_constraint_PG}.

\section*{Acknowledgements}
We thank Patrick E.\ Farrell and J{\o}rgen S.\ Dokken for help developing and debugging the exponential matrix functions used in our computations.
We also thank Dohyun Kim for his comments on the manuscript.

\section*{Funding}
BK was supported in part by the Center for Information Geometric Mechanics and Optimization (CIGMO), a PSAAP-IV Focused Investigatory Center funded by the U.S.\ Department of Energy, National Nuclear Security Administration under Award Number DE-NA0004261.
BK was also supported in part by the Alfred P.\ Sloan Foundation via a Sloan Research Fellowship in Mathematics.

\noindent FM is partially supported by the National Science Foundation under grant DMS-2409926.

\bibliographystyle{plain}
\bibliography{bib}

\appendix

\section[{APPENDIX}]{Proof of Lemma \protect\ref{th:grad distance}}
\label{sec:proof}
Before giving a proof of Lemma \ref{th:grad distance}, we recall a few definitions and results from \cite[Chapter~6]{MR3887684} and \cite[Chapter~9]{do1992riemannian}.

\begin{definition}[Energy of a curve]
\label{def:energy curve}
Let $(\mathcal M, g)$ be a Riemannian manifold.
Let $a < b$ and $\gamma : [a,b] \to \mathcal M$ be a piecewise-smooth curve.
The energy of $\gamma$ is defined to be
\[ E( \gamma) := \frac12 \int_a^b |\dot{\gamma}|_{\gamma(t)}^2 \dd t. \]
\end{definition}

\begin{definition}[Variation of a curve]
\label{def:variation}
Let $\gamma : [a,b] \to \mathcal M$ be a smooth curve.
A variation of $\gamma$ is a smooth family of curves $\Gamma:J \times [a,b] \to \mathcal M$ such that $J \ni 0$ is an open interval and $\Gamma(0,t) = \gamma(t)$ for each $t \in [a,b]$.

\end{definition}

\begin{definition}[Variation field]
\label{def:variation field}
Let $\Gamma$ be a variation of a smooth curve $\gamma$.
The variation field of $\Gamma$ is the smooth vector field $V: [a,b] \to T \mathcal M$ such that $V(t) = \partial_s \Gamma(0,t)$ along $\gamma$.
\end{definition}

\noindent We now state a well-known first variation formula; cf.~\cite[p.~195]{do1992riemannian}.
Note that $\mathcal E(\gamma) = \frac12 \operatorname{Dist}(\gamma(a),\gamma(b))^2$ if $\gamma$ is a geodesic.

\begin{theorem}[First variation formula for the energy of a curve]
\label{th:first variation}
Let $\gamma:[a,b] \to \mathcal M$ be a smooth curve.
Let $\Gamma$ be a smooth variation of $\gamma$ with $V(t) := \partial_s \Gamma(0,t)$.
The G\^{a}teaux derivative of $\mathcal E$ can be written
\[
    \mathcal E^\prime(\gamma)(V)
    =
    -\int_a^b g(V(t), D_t \dot{\gamma}) \dd t - g(V(a), \dot{\gamma}(a)) + g(V(b), \dot{\gamma}(b)).
\]
\end{theorem}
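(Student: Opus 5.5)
The plan is the classical computation via the Levi-Civita connection: differentiate under the integral sign, swap the covariant derivatives in $s$ and $t$, and integrate by parts in $t$. Write $\Gamma_s(t) := \Gamma(s,t)$, so that
\[
\mathcal E(\Gamma_s) = \frac12 \int_a^b g\big(\partial_t \Gamma(s,t), \partial_t \Gamma(s,t)\big) \dd t .
\]
By definition, $\mathcal E'(\gamma)(V) = \frac{\dd}{\dd s}\mathcal E(\Gamma_s)\big|_{s=0}$. Since $\Gamma$ is smooth on $J \times [a,b]$ and $[a,b]$ is compact, the integrand is smooth in $(s,t)$, so differentiation under the integral sign is justified.

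\emph{Step 1: differentiate the integrand.} Let $D_s, D_t$ denote covariant differentiation along the curves $s \mapsto \Gamma(s,t)$ and $t \mapsto \Gamma(s,t)$. Metric compatibility of the Levi-Civita connection gives
\[
\tfrac12 \partial_s\, g(\partial_t\Gamma, \partial_t\Gamma) = g(D_s \partial_t \Gamma, \partial_t \Gamma).
\]

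\emph{Step 2: symmetry lemma.} Because the Levi-Civita connection is torsion-free, one has $D_s \partial_t \Gamma = D_t \partial_s \Gamma$ for any smooth two-parameter family (cf.\ \cite[Chapter~6]{MR3887684}). This turns the integrand into $g(D_t \partial_s\Gamma, \partial_t\Gamma)$. This is the only genuinely geometric ingredient and the main point to check. If one prefers to avoid quoting it, I would verify it in local coordinates: both sides equal $\partial_s\partial_t \Gamma^k + \Gamma^k_{ij}\,\partial_s\Gamma^i\,\partial_t\Gamma^j$, and the symmetry $\Gamma^k_{ij} = \Gamma^k_{ji}$ of the Christoffel symbols makes them agree.

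\emph{Step 3: integrate by parts.} Use metric compatibility once more, now in $t$:
\[
g(D_t \partial_s\Gamma, \partial_t\Gamma) = \partial_t\, g(\partial_s\Gamma, \partial_t\Gamma) - g(\partial_s\Gamma, D_t\partial_t\Gamma).
\]
Setting $s=0$, we have $\partial_s\Gamma(0,t) = V(t)$ and $\partial_t\Gamma(0,t) = \dot\gamma(t)$. The fundamental theorem of calculus then gives
\[
\mathcal E'(\gamma)(V) = g(V(b),\dot\gamma(b)) - g(V(a),\dot\gamma(a)) - \int_a^b g(V(t), D_t\dot\gamma)\dd t ,
\]
which is the claimed formula.

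The only care needed beyond Step 2 is the assumption that $\gamma$ is smooth on all of $[a,b]$. For piecewise-smooth curves, one would instead sum this computation over the smooth pieces, which produces jump terms at the break points. These jump terms are not needed here.
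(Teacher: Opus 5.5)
Your proof is correct and follows the classical argument: metric compatibility, the symmetry lemma $D_s\partial_t\Gamma = D_t\partial_s\Gamma$ from torsion-freeness, and integration by parts in $t$. The paper gives no proof of its own and simply cites this standard argument from \cite[p.~195]{do1992riemannian}.
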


\noindent In the expression above, $D_t \dot{\gamma}$ denotes the covariant derivative of $\dot{\gamma}$ along $\gamma$, which vanishes along geodesics; see, e.g., \cite[p.~103]{MR3887684}.
Thus, if $\gamma$ is a geodesic, then
\[
    \mathcal E^\prime(\gamma)(V)
    =
    g(V(b), \dot{\gamma}(b)) - g(V(a), \dot{\gamma}(a))
\]
depends only on the variation field at the endpoints of the curve.
This property is leveraged in the proof of Lemma~\ref{th:grad distance}, below.

\begin{proof}[Proof of Lemma~\ref{th:grad distance}]
Let $p \in \mathcal M$.
For all $q \in \mathcal M$, we define $f(q) = \frac12 \operatorname{Dist}(q,p)^2$.
If $q$ is sufficiently close to $p$, we let $\gamma_q \colon [0,1] \to \mathcal M$ be the unique geodesic such that $\gamma_q(0) = q$ and $\gamma_q(1) = p$.
Note that $\dot{\gamma}_q(0) = \Log_q(p)$ by Definition~\ref{def:log} and $\mathcal E(\gamma_q) = f(q)$ because $\gamma_q$ is a geodesic.

We now consider a variation of $\gamma_q$, denoted by $\Gamma_q:J \times [0,1] \to \mathcal M$, with $\Gamma_q(s,1) = p$ for all $s \in J$.
Following the notation of Definition \ref{def:variation field}, we write the corresponding variation field as $V_q$ and note that $V_q(1) = 0$.
We can now apply Theorem \ref{th:first variation}, along with the properties $D_t \dot{\gamma}_q = 0$, $V_q(1) = 0$, and $\dot{\gamma}_q(0) = \Log_q(p)$, to arrive at the identity
\begin{equation*}
    d f_q (V_q(0))
    =
    \mathcal E^\prime(\gamma_q)(V_q)
    =
    -g( V_q(0), \Log_q(p))
    .
\end{equation*}
Since $q$ and $V_q(0) \in T_q \mathcal M$ were arbitrary, one has
\[ g(\Grad f, \mu) = d f (\mu) = -g(\Log_{(\cdot)}(p), \mu), \]
for all $\mu \in T \mathcal M$, as necessary.
\end{proof}

\section[{APPENDIX}]{Proof of Lemma \protect\ref{th:Lipschitz}}
\label{sec:proof 2}

\begin{proof}[Proof of Lemma \ref{th:Lipschitz}]
Let $U \in St(3,2)$.
In the following $A \lesssim B$ is used to denote $A \le c B$, where $c > 0$ is a constant independent of $A$, $B$ and $U \in St(3,2)$.

Let $W, G \in T_U St(3,2)$.
We define $A := WU^\tr - U W^\tr$, $B := GU^\tr - U G^\tr$, $E := -U^\tr W$ and $F := -U^\tr G$.
Recalling \eqref{eq:exponential map}, one has
\[ D \Exp_U(W)(G) =  \left. \frac{d}{dt} \exp(A+tB) \right|_{t=0} U \exp(E) + \exp(A) U \left. \frac{d}{dt} \exp(E+tF) \right|_{t=0}. \]
Let $\tilde W \in T_U St(3,2)$.
We define $\tilde A := \tilde WU^\tr - U \tilde W^\tr$ and $\tilde E := - U^\tr \tilde W$.
One has
\[ \begin{aligned}
[D \Exp_U(W) - D \Exp_U(\tilde W)](G) &= \left( \left. \frac{d}{dt} \exp(A+tB) \right|_{t=0} - \left. \frac{d}{dt} \exp(\tilde A+tB) \right|_{t=0} \right) U \exp(E)  \\
& \quad + \left. \frac{d}{dt} \exp(\tilde A+tB) \right|_{t=0} U [\exp(E) - \exp(\tilde E)] \\
& \quad + \exp(A) U \left( \left. \frac{d}{dt} \exp(E+tF) \right|_{t=0} - \left. \frac{d}{dt} \exp(\tilde E+tF) \right|_{t=0} \right) \\
& \quad + [\exp(A) - \exp(\tilde A)] U \left. \frac{d}{dt} \exp(\tilde E+tF) \right|_{t=0}.
\end{aligned} \]
Note that $A$ and $\tilde A$ are skew-symmetric.
According to \cite[Exercise~8-29,Proposition~20.8]{MR2954043} the matrix exponential applied to $A$ and $\tilde A$ produces orthogonal matrices, which are bounded.
A similar result holds for $E$ and $\tilde E$.
Applying \cite[Theorem 3(b)]{haber2018notes}, one has
\[ \left. \frac{d}{dt} \exp(\tilde A+tB) \right|_{t=0} = \int_0^1 \exp((1-s) \tilde A) B \exp(s \tilde A) \dd s.  \]
Thus,
\[ \left| \left. \frac{d}{dt} \exp(\tilde A+tB) \right|_{t=0}  \right|\le |B| \int_0^1 |\exp((1-s) \tilde A) | |\exp(s \tilde A)| \dd s \lesssim |G|.  \]
A similar result holds for $\tilde E + tF$.
Therefore,
\[ \begin{aligned}
\left| \left. \frac{d}{dt} \exp(A+tB) \right|_{t=0} - \left. \frac{d}{dt} \exp(\tilde A+tB) \right|_{t=0} \right| & \le |B| \int_0^1 |\exp((1-s)A) - \exp((1-s) \tilde A)| \dd s \\
& \quad + |B| \int_0^1 |\exp(s A) - \exp(s \tilde A)| \dd s, \\
& \lesssim |G| \int_0^1 (1-s) |A - \tilde A| + s |A - \tilde A| \dd s \lesssim |G|  |A - \tilde A|,
\end{aligned} \]
since $\exp$ is smooth and thus Lipschitz.
With a similar reasoning as above, one has
\[ \left| \left. \frac{d}{dt} \exp(E+tF) \right|_{t=0} - \left. \frac{d}{dt} \exp(\tilde E+tF) \right|_{t=0} \right| \lesssim  |G| |E - \tilde E|. \]
Summarizing the results above, one can conclude that 
\[ \| D \Exp_U(W) - D \Exp_U(\tilde W) \| \lesssim |W - \tilde W|. \]

\end{proof}

\end{document}